\newtheorem{definition}{D\'efinition}[section]
\newtheorem{lem}{Lemme}[section]
\newtheorem{theorem}{Th\'eorème}[section]
\newtheorem{prop}{Proposition}[section]
\newtheoremstyle{remark}%
{\topsep}
{\topsep}
{\upshape}
{}
{\itshape}
{.}
{ }
{\bf\thmname{#1}\thmnumber{ \textup{#2}}\thmnote{ \textnormal{(#3)}}}
\theoremstyle{remark}
\newtheorem{rem}{Remarque}[section]
\numberwithin{equation}{section}
\title{Le plus grand facteur premier de la fonction de Landau }
\author{M. Deléglise, J.-L. Nicolas}
\newcommand{\Pplus}{P^{+}}
\newcommand{\symgr}{\mathfrak{S}}
\newcommand{\thmin}{\theta_{\mathrm{min}}}
\newcommand{\starp}{{\!\!\!\hphantom{p}^{\star}\!p}}
\newcommand{\starP}{\!\!\!\hphantom{P}^{\star}\!P}
\newcommand{\starq}{\!\!\!\hphantom{q}^{\star}\!q}
\newcommand{\al}{\alpha}
\newcommand{\lb}{\lambda}
\newcommand{\bt}{\beta}
\newcommand{\dt}{\delta}
\newcommand{\gm}{\gamma}
\newcommand{\veps}{\varepsilon}
\newcommand{\bigo}[1]{O\!\left(#1\right)}
\newcommand{\intfd}[2]{\left(#1,\, #2\right]}
\newcommand{\iem}{^{\text{\`eme}}}
\newcommand{\qtx}[1]{\quad\text{#1}\quad}
\newcommand{\Li}{\operatorname{Li}}
\newcommand{\abs}[1]{\left\lvert #1 \right\rvert} 
\newcommand{\intfg}[2]{\left[#1,\, #2\right)}
\newcommand{\intf}[2]{\left[#1,\, #2\right]}
\newcommand{\dsm}[1]{\mbox{$\displaystyle #1 $}}
\newcommand{\set}[1]{\left\{#1\right\}}
\newcommand{\setof}[2]{\left\{#1\ ;\ #2\right\}}
\newcommand{\sg}[1]{#1^{''}}
\newcommand{\sgg}{^{''}}
\newcommand{\cp}{{\cal P}}
\newcommand{\suppress}[1]{}
\newcommand{\pfrac}[2]{\left(\frac{#1}{#2}\right)}
\newcommand{\intfrac}[2]{\left\lfloor\frac{#1}{#2}\right\rfloor}
\newcommand{\dv}{\mid}
\newcommand{\cn}{{\cal N}}
\renewcommand{\d}{\,\mathrm{d}}
\newcommand{\R}{\mathbb{R}}
\newenvironment{notation}{\smallskip\noindent {\bf Notation}~: }%
{\par\smallskip}
\def\lsim{\mathrel{\rlap{\lower5pt\hbox{\hskip1pt$\sim$}}
    \raise1.4pt\hbox{$\hspace{0.1em}<$}}} 
\def\gsim{\mathrel{\rlap{\lower4pt\hbox{\hskip1pt$\sim$}}
    \raise1pt\hbox{$>$}}} 
\begin{document}
\selectlanguage{french}
\maketitle

\selectlanguage{english}
\begin{abstract}
After Landau, let us define $g(n)$ as the maximal order of a
permutation of the symmetric group ${\symgr}_n$ on
$n$ letters. We give several estimates of the largest prime
divisor $\Pplus(g(n))$ of $g(n)$.
\end{abstract}
\selectlanguage{french}

\paragraph{Key words~:} maximal order, symmetric group, distribution of
primes, Landau's function.

\paragraph{2000 Mathematics subject classification~:} 11A25, 11N37, 11N05.

\section{Introduction}
Dans le groupe symétrique $\symgr_n$ sur $n$ lettres chacune
des $n!$ permutations a un ordre. Dans \cite{LAN}, Landau a considéré
\[
g(n)= \max_{\sigma \in \symgr_n} (\text{ordre de } \sigma)
\]
et démontré que
\begin{equation}\label{glandau}
g(n) = \max_{\ell(M) \le n} M
\end{equation}
o\`u $\ell$ est la fonction additive (c'est à dire vérifiant
$\ell(MN) = \ell(M) + \ell(N)$ lorsque $M$ et $N$ sont premiers
entre eux) définie par
\begin{equation}\label{ldef}
\ell(p^{\al}) = p^{\al}, \quad p \text{ premier, }\ \al \ge 1.
\end{equation}
On notera que $\ell(p^0) = 0 \ne p^0 = 1$, et que
\begin{equation}\label{lgn}
\ell(g(n)) \le n.
\end{equation}
La formule \eqref{glandau} entra\^ine
\begin{equation}\label{mpgn}
M > g(n) \implies \ell(M) > n
\end{equation}
ce qui montre que les valeurs de $g(n)$ sont les nombres
en lesquels la fonction $\ell$ est \og petite \fg. On a
donc un problème d'optimisation pour la fonction arithmétique
additive $\ell$ qui rejoint les problèmes
de grandes valeurs de certaines fonctions multiplicatives 
étudiées par Ramanujan 
(cf. \cite{RAMCOL,RAMNR,NICHCN}).

Les méthodes utilisées par Ramanujan dans \cite{RAMCOL}
pour étudier
les \og highly composite numbers \fg\ ont été employées dans 
(\cite{NICAA,NICTH})
 pour obtenir des propriétés de la décomposition
en facteurs premiers de $g(n)$.
Dans \cite{NICCRAS70} il est démontré que
\[
\lim_{n \to +\infty} \frac{g(n+1)}{g(n)} = 1.
\]
Dans l'article \cite{LAN} (cf. aussi \cite{MIL})
Landau démontre l'équivalence
\begin{equation}\label{equilandau}
\log g(n) \sim \sqrt{n \log n}, \qquad n \to +\infty.
\end{equation}
Cette évaluation de $\log g(n)$ a été améliorée dans
(\cite{SHAH,SZA,MAS,MNRAA,MNRMC})
à la fois sous forme asymptotique et sous
forme effective. Voir les formules figurant
ci--dessous au paragraphe \ref{rappelsgn}.

Il existe un algorithme simple permettant de calculer la table
des valeurs de $g(n)$ pour $1 \le n \le n_0$ (cf. \cite{NICRIRO,DNZ}).

Dans \cite{DNZ}, un algorithme beaucoup plus sophistiqué permet
de calculer la décomposition en facteurs premiers
de $g(n)$ pour $n$ quelconque inférieur à $10^{15}$.

Soit $\Pplus(M)$ le plus grand facteur premier du nombre
entier positif $M$.  Il est montré dans 
(\cite{NICAA,NICTH})
que 
\begin{equation}\label{lab5}
\Pplus(g(n)) \sim \log g(n) \sim \sqrt{n \log n},\qquad n \to +\infty
\end{equation}
et dans \cite{MNRMC}
\begin{equation}\label{}
\Pplus(g(n)) = \sqrt{n\log n}\left(1+\frac{\log_2 n +
    \bigo{1}}{2\log n} \right),\qquad n \to +\infty
\end{equation}
ainsi que la borne effective
\begin{equation}\label{}
\Pplus(g(n)) \le 2.86 \sqrt{n \log n},\quad n \ge 2.
\end{equation}
Cette dernière majoration a été améliorée par Grantham
(cf. \cite{GRA})~:
\begin{equation}\label{}
\Pplus(g(n)) \le 1.328 \sqrt{n \log n},\quad n \ge 5.
\end{equation}

L'objet de cet article est de donner des estimations plus précises
de $\Pplus(g(n))$. 

Nous minorerons $\Pplus(g(n))$ au paragraphe \ref{parmino};
cette minoration est basée sur la remarque suivante
(cf. lemme \ref{parminolemm1}): soit $q$ le nombre premier
suivant $\Pplus(g(n))$ (donc $q$ ne divise pas $g(n)$)
et soit $\lb$ un nombre premier tel que $\lb^{\al}$
divise $g(n)$ mais pas $\lb^{\al+1}$, alors $\lb^{\al} \le 2q$.
Ainsi la contribution des petits facteurs premiers
dans $g(n)$ est faible, et l'équivalence \eqref{equilandau}
impose que $\Pplus(g(n))$ ne soit pas trop petit.

La majoration, qui sera traitée au paragraphe \ref{parmajo}, est plus
délicate. Il faut en effet montrer que le nombre de facteurs premiers
inférieurs à $P = \Pplus(g(n))$ ne divisant pas $g(n)$ n'est pas trop
grand.  Pour cela nous utiliserons la méthode de Grantham \cite{GRA}.
On remarque (cf. lemme \ref{lemnicolas2}) qu'au plus un nombre premier
inférieur à $P/2$ ne divise pas $g(n)$. Ensuite on construit une suite
croissante $(\gm_k)$ avec $\gm_0 = 0$ telle qu'au plus un nombre
premier de l'intervalle $\intfd{\gm_k P}{\gm_{k+1} P}$ ne divise pas
$g(n)$.  La construction d'une telle suite est expliquée aux
paragraphes \ref{gsuite1} et \ref{gsuite2}.

Dans le paragraphe \ref{pplusvslog}, nous montrerons, à l'aide
de théorèmes d'oscillation en théorie des nombres premiers,
qu'il existe une infinité d'entiers $n$ tels
que $\Pplus(g(n)) > \log g(n)$ et une infinité d'entiers $n$
tels que $\Pplus g(n) < \log g(n)$.

Pour cela nous rappellerons la définition des nombres
$\ell$--superchampions qui jouent pour la fonction $\ell$ le même rôle
que les \og superior highly composite numbers \fg\ de Ramanujan
(cf. \cite{RAMCOL}, \S\, 32) pour la fonction
nombre de diviseurs.

Dans le paragraphe \ref{tabulations} nous introduirons
les tables numériques sur la distribution des nombres
premiers qui figurent en annexe. Des tables plus longues
sont déposées sur le site \url{http://math.univ-lyon1.fr/~deleglis/calculs.html}.

Dans le théorème \ref{t3} les nombres $5.54$ et $10.8$
pourraient être améliorés au prix d'un alourdissement des calculs numériques.
Ces constantes ont été déterminées en calculant $\Pplus(g(n))$ pour
tous les $n$ jusqu'à $10^6$. Cette borne pourrait être augmentée. Il
est en effet possible d'adapter l'algorithme de \cite{DNZ}
de fa\c con à donner rapidement, pour deux nombres
$\ell$--superchampions consécutifs $N$ et $N'$ le maximum et le minimum
de $\Pplus(g(n))$ dans l'intervalle $[\ell(N),\ell(N')]$.

\subsection{Notations}\label{secnotations}
Nous utiliserons les notations classiques suivantes.
\begin{description}
\item{1)}
Pour $i \ge 1$, $p_i$ est le $i\iem$ nombre premier.
\item{2)}
$\pi(x) = \sum_{p \le x} 1$ est le nombre des nombres premiers $\le x$.
\item{3)}
$\theta(x)$ et $\psi(x)$ sont les fonctions de Chebyshev
\begin{equation}\label{defthetapsi}
\theta(x) = \sum_{1 \le i \le \pi(x)} \log p_i
\qtx{et}
\psi(x)  =  \sum_{ p^{m} \le x} \log p = \sum_{k \le K} \theta(x^{1/k})
\end{equation}
o\`u $K$ est le plus petit entier tel que $x^{1/K} < 2$.
\item{4)}
$\Theta$ est la borne supérieure des parties réelles des zéros
de la fonction $\zeta$ de Riemann.
Sous l'hypothèse de Riemann, on a $\Theta = 1/2$.
\item{5)}
$\log_2 x = \log\log x$ et pour $k \ge 3$, $\log_k(x) = \log(\log_{k-1}(x)).$
\item{6)}
$\Li(x)$, le logarithme intégral de $x$, est défini pour $x > 1$ par
\[
\Li(x) = \lim_{\veps \to 0^+}%
\int_{0}^{1-\veps} + \int_{1+\veps}^{x} \frac{\d t}{\log t}%
= \gm + \log_2 x + \sum_{n=1}^{+\infty} \frac{(\log x)^n}{n n!},
\]
o\`u $\gm = 0.577\dots $ est la constante d'Euler. 
\item{7)}
Soit $f$ une fonction de la variable réelle $x$, continue par morceaux.
On note $f(x^-)$ et $f(x^+)$ les nombres définis par
\[
f(x^-) = \lim_{\stackrel{t \to x}{t < x}} f(t)
\qtx{ et }
f(x^+) = \lim_{\stackrel{t \to x}{t > x}} f(t).
\]
\end{description}

\subsection{Rappel des estimations de $\log g(n)$}\label{rappelsgn}

Nous utiliserons les résultats effectifs suivants, démontrés dans
\cite{MAS} et \cite{MNRMC}.

\begin{equation}\label{L1}
\log g(n) \le 1.053139976709\dots \sqrt{n \log n},\quad n \ge 1
\end{equation}
avec égalité pour $n = 1\,319\,766$.

\begin{eqnarray}
\label{L2}
\log g(n) &\ge& \sqrt{n \log n},%
\hphantom{\left(1+\frac{\log_2 n - 0.975}{2\log n} \right)}\quad n \ge 906. \\
\label{L3}
\log g(n) &\le& 
\sqrt{n \log n}\left(1+\frac{\log_2 n - 0.975}{2\log n} \right),\quad n \ge 3
\\
\label{L4}
\log g(n) &\ge&
\sqrt{n \log n}\left(1+\frac{\log_2 n - 1.18}{2\log n}\right),
\quad\; n \ge 899\,059
\end{eqnarray}
Notons que la majoration \eqref{L4} est meilleure que \eqref{L3}
pour $n \ge 68 \,745\,487$.

Nous aurons aussi besoin des résultats asymptotiques de \cite{MNRAA}~:

\noindent
Il existe une constante $a > 0$ telle que
\begin{equation}\label{asy}
\log g(n) = \sqrt{\Li^{-1}(n)} + \bigo{\sqrt{n}e^{-a\sqrt{\log n}}}.
\end{equation}
Si $\Theta < 1$, on a
\begin{equation}\label{asyTH}
\log g(n) = \sqrt{\Li^{-1}(n)} + \bigo{(n \log n)^{\Theta/2}}.
\end{equation}

\section{Fonctions portant sur les nombres premiers}\label{tabulations}

\subsection{Les fonctions $\theta$ et $\psi$ de Chebyshev}

\subsubsection{Encadrements effectifs}

Nous utiliserons les résultats suivants.

\begin{equation}\label{T1}
\theta(x) < x \text{ pour } x \le 8\cdot10^{11}.
\end{equation}
Schoenfeld (cf. \cite{SCH76} p. 360) mentionne que R. P. Brent a
vérifié \eqref{T1} pour $x < 10^{11}$. P. Dusart (cf. \cite{DUSMC}) a
calculé $\theta(x)$ jusqu'à $8.10^{11}$ et a établi la majoration
suivante (qui améliore le résultat de \cite{SCH76} p. 360
o\`u la majoration $\theta(x) < 1.000\,081\,x$ est prouvée pour $x > 0$)
\begin{equation}\label{T2}
\theta(x) < x + \frac{1}{36\,260} x \le 1.000\,028\,x\quad (x > 0).
\end{equation}
L'encadrement ci-dessous figure dans (\cite{RS62}, Th. 18)
\begin{equation}\label{T3}
x - 2.06\sqrt x < \theta(x) < x,\quad(0 < x \le 10^8).
\end{equation}
L'inégalité suivante (cf. \cite{DUSMC}) améliore le résultat de
\cite{SCH76}
où le coefficient $0.2$ était $8.072$
\begin{equation}\label{T4}
\abs{\theta(x)-x} \le \frac{0.2\, x}{\log^2 x},\quad (x \ge 3\,594\,641).
\end{equation}
On trouvera les inégalités suivantes dans (\cite{RS62}, Th. 13)
\begin{equation}\label{T5}
\theta(x) \le \psi(x) \le \theta(x) + 1.42620\,\sqrt x,
\qquad x > 0.
\end{equation}

\subsubsection{La fonction $\thmin$}\label{sectionthmin}
Nous aurons aussi besoin de minorer $\dfrac{\theta(x)}{x}$ sur des
intervalles de la forme $\intfg{y}{+\infty}$.  Notons
\begin{equation}\label{thmin}
\thmin(y) =
\inf_{x \ge y} \dfrac{\theta(x)}{x}\cdot
\end{equation}
La fonction $\thmin$ est une fonction en escalier croissante et
continue à droite.  Puisque $\theta$ est constante sur tout intervalle
$\intfg{p_{i-1}}{p_{i}}$, le rapport $\theta(x)/x$ décroit sur cet
intervalle, avec pour borne inférieure $\theta(p_{i-1})/p_{i}$.  Il en
résulte que
\begin{equation}\label{thminpi}
\thmin(y) = \inf_{p_{i} > y}\ \frac{\theta(p_{i-1})}{p_{i}}
\end{equation}
et, si l'on définit l'indice $i_y$ par
$p_{i_y-1} \le y < p_{i_y}$, 
on a 
\begin{equation}\label{piy}
\thmin(y) = 
\inf\set{\frac{\theta(p_{i_y-1})}{p_{i_y}},\frac{\theta(p_{i_y})}{p_{i_y+1}},\cdots}
= \min\set{\frac{\theta(p_{i_y-1})}{p_{i_y}},\thmin(p_{i_y})}.
\end{equation}
Par la formule
\eqref{piy}, $\thmin$ est constante 
sur \dsm{\intfg{p_{i_y-1}}{p_{i_y}}} ; ainsi les points de
discontinuité de $\thmin$ sont des nombres premiers que l'on
appelle nombres $\thmin$--champions. 

Pour tout nombre premier $p$, désignons par $\starp$ le nombre premier
précédant $p$.  
Soit $p$ un nombre $\thmin$--champion.  Puisque $\thmin$ est
croissante, au point de discontinuité $p$, on a
\begin{equation}\label{pp-}
\thmin(p) > \thmin(p^-).
\end{equation}
Lorsque  $y$ tend vers $p$ par valeurs inférieures, on a $p=p_{i_y}$,
et les formules \eqref{piy} et \eqref{pp-} donnent
\begin{equation}\label{thp-}
\thmin(p^-) =  \min\set{\frac{\theta(\starp)}{p},\,\thmin(p)}
= \frac{\theta(\starp)}{p} < \thmin(p).
\end{equation}
Si $p < q$ sont deux nombres $\thmin$--champions consécutifs,
la fonction $\thmin$ est constante sur $\intfg{p}{q}$ et,
pour $p \le y < q$, on a
\begin{equation}\label{yqpp-}
\thmin(y) = \thmin(p) = \thmin(q^-) = \frac{\theta(\starq)}{q}
< \thmin(q).
\end{equation}
Ainsi, $p$ est le plus grand nombre premier inférieur à $q$ et
vérifiant
\begin{equation}\label{pq-}
\thmin(p^-) = \frac{\theta(\starp)}{p} < \frac{\theta(\starq)}{q} \cdot
\end{equation} 

La table \ref{tablethmin} contient
les premiers $\thmin$--champions $p$ et leurs records,
$\thmin(p)$, arrondis par défaut.
Pour $y \ge 2$ la valeur $\thmin(y)$ est donnée par  
\begin{equation}\label{deltavalue}
\thmin(y) = \thmin(p) \text{ avec  } p 
\text{ le plus grand champion $\le y$ de }\thmin.
\end{equation}
Pour calculer la table \ref{tablethmin}, on observe d'abord
que pour $x \ge 10\,000$ on a
\dsm{\frac{\theta(x)}{x} \ge 0.9794}.
Pour $10^4 \le x \le 10^8$ cela résulte de \eqref{T3}~:
\begin{equation}\label{9794}
\frac{\theta(x)}{x} > 1 - \frac{2.06}{\sqrt x} 
\ge 1-\frac{2.06}{\sqrt{10^4}} =  0.9794,
\end{equation}
tandis que, pour $x > 10^8$, \eqref{T4} implique
\begin{equation}\label{9994}
\frac{\theta(x)}{x} \ge 1-\frac{0.2}{\log^2 x} 
\ge 1-\frac{0.2}{\log^2 10^8} \ge 0.9994.
\end{equation}
Le plus grand nombre premier $P < 10\,000$ vérifiant
\dsm{\theta(\starP)/P < 0.9794} est $P = 7477$.
Par \eqref{thmin}, \eqref{9794} et \eqref{9994}, on a
$\thmin(10000) \ge 0.9794$, ce qui, par \eqref{thminpi},
entraîne $\theta(\starp)/p \ge 0.9794$ pour $p \ge 10000$.
Le choix $P = 7477$ implique $\theta(\starp)/p \ge 0.9794$
pour $P < p < 10000$. On a donc, par \eqref{thminpi},
$\thmin(P) \ge 0.9794$ et, par \eqref{piy},
\[
\thmin(P^-) = \thmin(\starP) = 
\min\left(\frac{\theta(\starP)}{P},\thmin(P) \right)
= \frac{\theta(\starP)}{P} < \thmin(P),
\] 
ce qui montre que $P = 7477$ est un $\thmin$--champion.

Puis, par récurrence descendante, à partir d'un champion $q$,
on détermine le champion $p$ précédant $q$ par la règle
\eqref{pq-}.

\subsubsection{La fonction $\theta_d$}
\begin{definition}
On définit la fonction $\theta_d$ pour $y \ge  1$ par
\[
\theta_d(y) = \sup_{x \ge y} \abs{\frac{\theta(x)}{x}-1}\log^2 x,
\]
de sorte que, pour tout $x \ge y$ on a
\begin{equation}\label{propthetad}
\abs{\frac{\theta(x)}{x}-1} \le \frac{\theta_d(y)}{\log^2 x}\cdot
\end{equation}
\end{definition}

Le résultat \eqref{T4} de Dusart entraine~:

\begin{lem}\label{lemm1}
Pour $y \ge 3\,594\,641$ on a $\theta_d(y) \le 0.2.$
\end{lem}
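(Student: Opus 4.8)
Le plan est de déduire l'énoncé directement de la majoration \eqref{T4} de Dusart~: la fonction $\theta_d$ n'est essentiellement qu'une reformulation de cette inégalité en termes de borne supérieure, de sorte qu'aucun travail supplémentaire de théorie analytique des nombres premiers n'est nécessaire.

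D'abord, je partirais de \eqref{T4}, valable pour tout $x \ge 3\,594\,641$~:
\[
\abs{\theta(x)-x} \le \frac{0.2\, x}{\log^2 x}.
\]
Sur ce domaine on a $x > 0$ et $\log^2 x > 0$ ; je diviserais donc les deux membres par $x$, puis multiplierais par $\log^2 x$, pour obtenir l'inégalité équivalente
\[
\abs{\frac{\theta(x)}{x}-1}\log^2 x \le 0.2 \qquad (x \ge 3\,594\,641).
\]

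Ensuite, je remarquerais que cette majoration est \emph{uniforme} en $x$ sur $\intfg{3\,594\,641}{+\infty}$. Par conséquent, pour tout $y \ge 3\,594\,641$, l'intervalle $\intfg{y}{+\infty}$ est inclus dans $\intfg{3\,594\,641}{+\infty}$, et la quantité $\abs{\theta(x)/x-1}\log^2 x$ reste $\le 0.2$ pour chaque $x \ge y$. Il ne resterait plus qu'à prendre la borne supérieure sur $x \ge y$ pour conclure
\[
\theta_d(y) = \sup_{x \ge y}\abs{\frac{\theta(x)}{x}-1}\log^2 x \le 0.2.
\]

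La difficulté de cette preuve est quasi nulle, puisqu'il s'agit seulement de réécrire \eqref{T4} sous la forme \eqref{propthetad} avec la constante $0.2$. Le seul point à surveiller est que cette constante $0.2$ ne dépend pas de $x$ sur tout le domaine considéré~: c'est précisément cette uniformité qui permet de la conserver après passage à la borne supérieure définissant $\theta_d(y)$, et ce dès que $y \ge 3\,594\,641$.
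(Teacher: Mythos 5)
Votre preuve est correcte et coïncide avec l'argument du papier, qui se contente d'affirmer que \eqref{T4} entraîne le lemme~: il s'agit bien de la simple réécriture de $\abs{\theta(x)-x} \le 0.2\,x/\log^2 x$ sous la forme $\abs{\theta(x)/x-1}\log^2 x \le 0.2$, uniforme pour $x \ge 3\,594\,641$, suivie du passage à la borne supérieure définissant $\theta_d(y)$. Rien à redire.
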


\begin{lem}\label{lemttd}
Posons $p_0=1$ et soit $i \ge 0$ tel que $p_i < p_{i+1} < 8\cdot
10^{11}$. La fonction $x \mapsto \abs{\frac{\theta(x)}{x}-1} \log^2 x$
est croissante sur l'intervalle \dsm{[p_i,\,p_{i+1})} et  l'on a
\[
\sup_{p_i \le x < p_{i+1}} \abs{\frac{\theta(x)}{x}-1} \log^2 x 
= \frac{p_{i+1}-\theta(p_i)}{p_{i+1}}\log^2 p_{i+1}\cdot
\]
\end{lem}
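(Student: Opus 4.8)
The plan is to use that $\theta$ is constant on $\intfg{p_i}{p_{i+1}}$, which reduces the claim to a one–variable monotonicity statement. First I would observe that for $x \in \intfg{p_i}{p_{i+1}}$ there is no prime in $\intouv{p_i}{x}$, so $\theta(x) = \theta(p_i)$. Since $p_{i+1} < 8\cdot 10^{11}$, every such $x$ satisfies $x < 8\cdot 10^{11}$, and \eqref{T1} gives $\theta(x) < x$; hence $\theta(x)/x - 1 < 0$ and the absolute value opens to yield, for $x \in \intfg{p_i}{p_{i+1}}$,
\[
f(x) := \abs{\frac{\theta(x)}{x}-1}\log^2 x = \left(1 - \frac{\theta(p_i)}{x}\right)\log^2 x.
\]
(When $i = 0$ one has $p_0 = 1$ and $\theta(p_0) = 0$, so $f(x) = \log^2 x$ is trivially increasing on $\intfg{1}{2}$.)

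Next I would establish that $f$ is increasing by differentiating on $\intouv{p_i}{p_{i+1}}$, where $\theta(x) = \theta(p_i) =: c$ is constant. A direct computation gives
\[
f'(x) = \frac{\log x}{x^2}\bigl(2x - 2c + c\log x\bigr).
\]
For $x \ge p_i \ge 1$ the factor $\log x$ is nonnegative, and the bracket is positive: indeed $c = \theta(p_i) < p_i \le x$ by \eqref{T1}, so $2x - 2c > 0$, while $c\log x \ge 0$. Thus $f'(x) > 0$ on $\intouv{p_i}{p_{i+1}}$, and as $f$ is continuous on $\intfg{p_i}{p_{i+1}}$ it is strictly increasing there, which is the first assertion.

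Finally, since $f$ is increasing and the interval is open on the right, its supremum is the left-hand limit at $p_{i+1}$. Because $\theta(x) = \theta(p_i)$ throughout $\intfg{p_i}{p_{i+1}}$, this limit is
\[
\lim_{x \to p_{i+1}^{-}} \left(1 - \frac{\theta(p_i)}{x}\right)\log^2 x = \frac{p_{i+1} - \theta(p_i)}{p_{i+1}}\log^2 p_{i+1},
\]
as claimed. The only delicate point — and the sole place where the hypothesis $p_{i+1} < 8\cdot 10^{11}$ enters — is the positivity of the bracket in $f'$, i.e. the inequality $\theta(p_i) < p_i$; this rests entirely on the effective bound \eqref{T1}, and for $x$ beyond the range covered by \eqref{T1} the sign of $f'$ would no longer be guaranteed.
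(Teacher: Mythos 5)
Your proof is correct and follows essentially the same route as the paper: open the absolute value via \eqref{T1}, differentiate $f(x) = \left(1-\frac{\theta(p_i)}{x}\right)\log^2 x$, and show the bracket in $f'$ is positive. One small difference is worth noting: the paper groups the bracket as $f_1(x) = \theta(p_i)(\log x - 2) + 2x$, so positivity is not immediate when $\log x < 2$, and it must argue that $f_1$ is increasing, hence $f_1(x) \ge f_1(p_i) > 0$ for $p_i \ge 11 > e^2$, and then check $f_1(p_i) > 0$ by direct computation for $0 \le i \le 4$. Your grouping $2\bigl(x - \theta(p_i)\bigr) + \theta(p_i)\log x$ makes positivity immediate from $\theta(p_i) < p_i \le x$ (again \eqref{T1}) and $\log x \ge 0$, eliminating the small-prime case check; you also spell out the supremum computation as the left-hand limit at $p_{i+1}$, which the paper leaves implicit. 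So your argument is a slight streamlining of the paper's, with nothing lost.
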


\begin{proof}
Soit $x$ vérifiant $p_i \le x < p_{i+1}$ et posons 
$f(x) = \abs{\dfrac{\theta(x)}{x}-1} \log^2 x$. 
Par \eqref{T1} on a  
\[
f(x) = \left(1-\frac{\theta(p_i)}{x}\right)\log^2 x
\]
et
\dsm{f'(x) = \frac{\log x}{x^2} f_1(x)} avec 
$f_1(x) = \theta(p_i) (\log x-2) + 2x$.
La fonction $f_1$ est croissante, donc $f_1(x) \ge f_1(p_i) > 0$
pour $p_i \ge 11 > e^2$. Le calcul de 
$f_1(p_i)$ pour $0 \le i \le 4$ montre que $f_1(p_i) > 0$
pour tout $i \ge 0$. Ainsi $f$ est croissante sur l'intervalle
$[p_i,\,p_{i+1})$ ce qui prouve le lemme.
\end{proof}

Si $p$ est un nombre premier, posons 
\dsm{F(p) = \left(1-\frac{\theta(^\star p)}{p}\right)\log^2 p}
où $^\star p$ désigne le nombre premier précédant $p$, avec la
convention $^\star 2=1$. Le calcul montre que $P=3\,594\,641$ est premier
et que l'on a $F(P)=0.200386\dots > 0.2$. 

Il résulte des lemmes \ref{lemm1} et  \ref{lemttd} que l'on a pour $y < P$
\begin{equation}\label{thdF}
\theta_d(y) = \max_{y < p \le P} F(p).
\end{equation}
La formule \eqref{thdF} montre que, sur l'intervalle $[1,P)$,
  $\theta_d$ est une fonction en escalier décroissante et continue à
  droite.  Appelons $\theta_d$--champions les nombres premiers $p\le
  P$ qui sont points de discontinuité de $\theta_d$. Convenons aussi
  que $1$ est un $\theta_d$--champion. Le nombre $P$ est aussi un
$\theta_d$--champion~: en effet, par le lemme \ref{lemm1}, on a 
$\theta_d(P) \le 0.2$ tandis que $\theta_d(P^{-}) = F(P) > 0.2$.

Si $p$ et $q$ sont deux nombres $\theta_d$--champions consécutifs
vérifiant $1\le p < q \le P$, on a donc pour $p \le y <q$~:
\[
\theta_d(y) =\theta_d(p) = \theta_d(q^-)= F(q) >  \theta_d(q)
\]
et, si $p \ne 1$, $p$ est le plus grand nombre premier inférieur à $q$
et vérifiant
\begin{equation}\label{Fp>Fq}
F(p) > F(q).
\end{equation}
Le nombre $P=3\,594\,641$ est un $\theta_d$--champion. Par la formule 
\eqref{Fp>Fq}, si $q\le P$ est un nombre $\theta_d$--champion, le
champion précédant $q$ est le plus grand nombre premier $p$ vérifiant
$p < q$ et $F(p) > F(q)$.

On calcule ainsi par récurrence descendante les nombres
$\theta_d$--champions jusqu'à $59$. Mais pour tout nombre premier $p <
59$, on a $F(p) < F(59)$. La fonction $\theta_d$ est donc constante
sur l'intervalle $[1,59)$ et vaut $F(59)=3.9648\ldots$. On a donc,
pour tout $x \ge 1$, 
\[
\left| \frac{\theta(x)}{x}-1\right| \log ^2(x)\leq \theta_d(x) \le
\theta_d(59^-)=F(59) < 3.965.
\]
La table \ref{tableThetad} contient la liste des premiers nombres 
$\theta_d$--champions.

\subsection{Terme d'erreur dans le théorème des nombres premiers}

Soit $\theta$ la fonction de Chebyshev définie en \eqref{defthetapsi}.

\begin{enumerate}
\item
Il existe $a > 0$ tel que
\begin{equation}\label{TNP1}
\theta(x) = x  + \bigo{x\exp\big(-a\sqrt{\log x}\,\big)}
\end{equation}
\item
Si $\Theta < 1$, on a
\begin{equation}\label{TNP2}
\theta(x) = x +  \bigo{x^{\Theta} \log^2 x}.
\end{equation}
\item
Si l'hypothèse de Riemann est vraie, on a
\begin{equation}\label{TNP3}
\abs{\theta(x)-x} \le \frac{1}{8\pi} \sqrt x \log^2 x, \qquad x \ge 599.
\end{equation}
\end{enumerate}
Les points 1. et 2. se trouvent dans les traités de théorie analytique
des nombres, par exemple \cite{INGHAM32} ou \cite{EMF}. Le point 3.
est prouvé dans \cite{SCH76}, p. 337.

\subsection{Écarts entre nombres premiers}\label{ecartsp}

\begin{enumerate}
\item
Nous utiliserons le résultat suivant de Baker, Harman et Pintz
(\cite{BHP} p. 562).
Soit $\delta = 0.525$, alors, pour $x$ assez grand, on a
\begin{equation}\label{BHP}
\pi(x+x^{\delta})-\pi(x) \ge \frac{9}{100} \frac{x^{\delta}}{\log x},
\end{equation}
ce qui implique que, pour $p_i$ assez grand, $p_{i+1} \le p_i +
p_i^{\dt}$, et donc l'existence de $a$ tel que
\begin{equation}\label{bhppi}
p_{i+1} \le p_i + a p_i^{\dt},\qquad i \ge 1.
\end{equation}

\item
De façon effective, Dusart a démontré dans \cite{DUS10}, proposition 6.8,
que, pour $x \ge 396\, 738$, l'intervalle 
\begin{equation}\label{label21}
\intf{x}{x+\frac{x}{25 \log^2 x}}
\end{equation}
 contient un nombre premier. Cela entraine pour 
$p_i \ge 396\,833 = p_{33\,609}$, 
\begin{equation}\label{duspi}
p_{i+1} \le p_i + \frac{p_i}{25\,\log^2 p_i}\cdot
\end{equation}
\item
Si l'hypothèse de Riemann est vraie la formule \eqref{TNP3}
permet de montrer que, pour $x \ge 599$, l'intervalle
\dsm{\intf{x-\sqrt x \log^2 x/(4\pi)}{x}} contient un nombre
premier (cf. \cite{RAMSAO}, (1)).

Dans l'article \cite{CRA}, toujours sous l'hypothèse de Riemann,
Cramér démontre qu'il existe $b$ tel que l'intervalle
$\intf{x}{x+b\sqrt x \log x}$ contienne un nombre premier.
Ramaré et Saouter ont rendu ce résultat effectif en montrant
dans (\cite{RAMSAO},  th. 1) que, pour $x \ge 2$,
l'intervalle
\begin{equation}\label{craint}
\intf{x-\frac{8}{5}\sqrt x \log x}{x} 
\end{equation}
contient un nombre premier, ce qui entraine que, pour $p_i \ge 3$,
\begin{equation}\label{lab18}
p_{i-1} \ge p_i - \frac85 \sqrt p_i \log p_i.
\end{equation}

\item
Dans l'article \cite{CRA}, la \og Conjecture de Cramér \fg\ est
énoncée comme suit
\begin{equation}\label{conjcra}
p_{i+1} - p_i = \bigo{\log^2 p_i}.
\end{equation}
Cette conjecture est étayée par les calculs numériques (cf. par
exemple \cite{Nicy}).
\end{enumerate}

\subsection{Les fonctions $\eta_k$}\label{etasection}
\subsubsection{Définition}

Soit $k \ge 1$ un nombre entier.
Par le théorème des nombres premiers, le rapport $p_{i-k}/p_{i}$ tend
vers $1$ quand $i \to +\infty$. Pour $i_0 \ge k+1$ il n'y a donc
qu'un nombre fini d'entiers $i$ tels que 
\dsm{\frac{p_{i-k}}{p_{i}} \le \frac{p_{i_0-k}}{p_{i_0}} < 1}, 
et la définition suivante a un sens.

\begin{definition}\label{defetak}
On définit la fonction $\eta_k$ sur l'intervalle
$\intfg{p_k}{+\infty}$ par
\begin{equation}\label{eta1}
\eta_k(x) =  \min\set{\frac{p_{i-k}}{p_{i}} \Big| p_i > x}.
\end{equation}
\end{definition}

Il résulte de \eqref{eta1} que $\eta_k$ est une fonction
en escalier croissante dont les points de discontinuté sont
des nombres premiers appelés \emph{nombres $\eta_k$--champions}.
Par convention, $p_k$ est un nombre $\eta_k$--champion. Plus
précisément, si $p_{i'}$ et $p_{i\sgg}$ sont deux nombres
$\eta_k$--champions consécutifs, on a pour $p_{i'} \le x < p_{i\sgg}$
\[
\eta_k(x) = \eta_k(p_{i'}) = \frac{p_{i\sgg-k}}{p_{i\sgg}}\cdot
\]
Le nombre premier $p_i$ est un nombre $\eta_k$--champion si l'on
a ou bien $i=k$ ou bien $i > k$ et
\begin{equation}\label{etakch}
\eta_k(p_{i-1}) < \eta_k(p_i).
\end{equation}
\begin{lem}\label{lemetak}
Soit $x \ge p_k$.
Pour tout $y \ge x$, l'intervalle $\intfd{ \eta_k(x)y}{y}$
contient au moins $k$ nombres premiers, et $\eta_k(x)$ est le
plus grand réel $\lb$ tel que \dsm{\intfd{\lb y}{y}}
contienne au moins $k$ nombres premiers pour tout $y \ge x$.
\end{lem}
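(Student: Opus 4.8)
Le plan est de démontrer séparément les deux affirmations du lemme, en ramenant à chaque fois la présence d'au moins $k$ nombres premiers dans un intervalle de la forme $\intfd{\lb y}{y}$ à une inégalité portant sur le $k\iem$ plus grand nombre premier inférieur ou égal à $y$. Précisément, si $p_1 < \dots < p_i$ désignent les nombres premiers $\le y$, ceux qui appartiennent à $\intfd{\lb y}{y}$ sont les plus grands d'entre eux, et il y en a au moins $k$ si et seulement si $p_{i-k+1} > \lb y$.

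Pour la première affirmation, je poserais $\lb = \eta_k(x)$, fixerais $y \ge x$ et introduirais l'indice $i$ tel que $p_i \le y < p_{i+1}$. Comme $y \ge x \ge p_k$, on a $i \ge k$, et il suffit d'établir $p_{i-k+1} > \lb y$. Le point décisif est l'inégalité $p_{i+1} > y \ge x$ : le nombre $p_{i+1}$ figure alors parmi ceux sur lesquels porte le minimum \eqref{eta1}, d'où $\frac{p_{i+1-k}}{p_{i+1}} \ge \eta_k(x) = \lb$. En multipliant par $p_{i+1}$ puis en utilisant $p_{i+1} > y$, j'obtiendrais $p_{i-k+1} = p_{i+1-k} \ge \lb\, p_{i+1} > \lb y$, ce qui conclut.

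Pour la seconde affirmation, il s'agit de montrer qu'aucun réel $\lb > \eta_k(x)$ ne possède la propriété voulue. J'exploiterais le fait, justifié juste avant la définition \ref{defetak}, que le minimum \eqref{eta1} est atteint : il existe un nombre premier $p_j > x$ tel que $\frac{p_{j-k}}{p_j} = \eta_k(x)$ (et $j > k$ car $p_j > x \ge p_k$). Soit donc $\lb > \eta_k(x)$ ; si $\lb \ge 1$ l'intervalle $\intfd{\lb y}{y}$ est vide et la propriété tombe en défaut trivialement, je supposerais donc $\eta_k(x) < \lb < 1$. L'inégalité $\lb > \frac{p_{j-k}}{p_j}$ donne $\frac{p_{j-k}}{\lb} < p_j$, et je choisirais $y$ dans l'intervalle $\intouv{\max\!\left(x,\, p_{j-1},\, \frac{p_{j-k}}{\lb}\right)}{p_j}$. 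Pour un tel $y$, le plus grand nombre premier $\le y$ est $p_{j-1}$, le $k\iem$ est $p_{j-k}$, et la condition $y > \frac{p_{j-k}}{\lb}$ entraîne $\lb y > p_{j-k}$, donc $p_{j-k} \notin \intfd{\lb y}{y}$ ; l'intervalle ne peut alors contenir que des nombres premiers parmi $p_{j-1}, \dots, p_{j-k+1}$, soit au plus $k-1$ d'entre eux, ce qui exhibe un $y \ge x$ en défaut.

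Aucune des deux parties ne présente de difficulté sérieuse une fois adoptée cette reformulation ; le seul point réclamant un peu de soin est de s'assurer que l'intervalle dans lequel on choisit $y$ à la seconde partie est non vide, ce qui découle de ce que chacune des trois bornes $x$, $p_{j-1}$ et $\frac{p_{j-k}}{\lb}$ est strictement inférieure à $p_j$.
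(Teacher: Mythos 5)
Votre preuve est correcte et suit essentiellement la même démarche que celle du papier : pour la première partie, vous appliquez le minimum définissant $\eta_k$ au plus petit nombre premier excédant $y$ (sous forme directe là où le papier raisonne par l'absurde), et pour la seconde, comme le papier, vous utilisez le fait que le minimum \eqref{eta1} est atteint en un $p_j > x$ et testez un $y$ juste en dessous de $p_j$. La seule variante est cosmétique : en choisissant explicitement $y > p_{j-k}/\lb$ vous évitez le passage à la limite $y \to p_{i_0}$ du papier, ce qui revient au même argument.
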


\begin{proof}
Notons $\eta = \eta_k(x)$.  Si l'intervalle $\intfd{\eta y}{y}$ ne
contient pas $k$ nombres premiers, soit $p_{i}$ le plus petit nombre
premier qui est strictement plus grand que $y$. Alors
\[
p_{i-k} \le \eta y < y < p_i,
\]
et donc
\dsm{\eta_k(x) = \frac{\eta y}{y} > \frac{p_{i-k}}{p_{i}}} ce qui
est absurde car $p_i > x$. 

Réciproquement, posons $\eta_k(x) = \frac{p_{i_0-k}}{p_{i_0}}$
avec $p_{i_0} > x$ et supposons que l'intervalle $\intfd{\lb y}{y}$
contienne $k$ nombres premiers pour tout $y \ge x$.
Choisissons $y$ tel que $p_{i_0} > y > \max(p_{i_0-1},x)$.
On doit avoir $\lb y < p_{i_0-k}$, ce qui, en faisant tendre $y$
vers $p_{i_0}$, donne $\lb \le \eta_k(x)$.
\end{proof}

\subsubsection{Minoration de $\eta_k$}

\begin{lem}\label{lemink}
 Soit $i_0 \ge 1$ un entier et 
$f~:\ \intfg{p_{i_0}}{+\infty} \rightarrow \R$ une
fonction croissante. On suppose que l'on a la majoration
\begin{equation}\label{pif}
p_{i+1} \le p_i + f(p_{i+1}),\qquad i \ge i_0.
\end{equation}
Soit $k \ge 1$ et $x \ge p_{i_0+k-1}$. Si la fonction 
$t \mapsto f(t)/t$ est décroissante pour $t \ge x$,
on a
\begin{equation}\label{etak>}
\eta_k(x) \ge 1 - k\frac{f(x)}{x}\cdot
\end{equation}
\end{lem}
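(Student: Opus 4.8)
The plan is to bound from below every ratio $p_{i-k}/p_i$ that enters the minimum \eqref{eta1} by the quantity $1 - k f(x)/x$, and then pass to the infimum. So I fix an index $i$ with $p_i > x$. The first thing to check is that the gap hypothesis \eqref{pif} is available across the whole window $p_{i-k}, p_{i-k+1}, \ldots, p_i$: since $x \ge p_{i_0+k-1}$ and $p_i > x$, one gets $i \ge i_0 + k$, hence $i-k \ge i_0$, so \eqref{pif} applies to each consecutive gap $p_{m}-p_{m-1}$ with $i-k+1 \le m \le i$.

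Telescoping these gaps gives $p_i - p_{i-k} = \sum_{m=i-k+1}^{i}(p_m - p_{m-1}) \le \sum_{m=i-k+1}^{i} f(p_m)$, a sum of $k$ terms. The heart of the argument is the uniform estimate $f(p_m) \le \frac{f(x)}{x}\, p_i$ valid for every $m$ in this window, which I would establish by distinguishing two cases. If $p_m \ge x$, the assumption that $t \mapsto f(t)/t$ is nonincreasing on $\intfg{x}{+\infty}$ yields $f(p_m) \le \frac{f(x)}{x}\, p_m \le \frac{f(x)}{x}\, p_i$, using $p_m \le p_i$. If $p_m < x$, then $f$ nondecreasing gives $f(p_m) \le f(x) = \frac{f(x)}{x}\, x \le \frac{f(x)}{x}\, p_i$, the last inequality because $p_i > x$ and $f(x) \ge 0$ (the gaps in \eqref{pif} are positive, so $f$ is positive wherever this case is relevant).

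Summing the $k$ copies of this bound gives $p_i - p_{i-k} \le k\,\frac{f(x)}{x}\, p_i$, that is $\frac{p_{i-k}}{p_i} \ge 1 - k\,\frac{f(x)}{x}$. Since this holds for every prime $p_i > x$, taking the minimum over such $i$ in \eqref{eta1} produces $\eta_k(x) \ge 1 - k f(x)/x$, which is exactly \eqref{etak>}.

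I expect the only delicate point to be the treatment of the primes $p_m$ lying below $x$: the monotonicity of $f(t)/t$ is assumed only on $\intfg{x}{+\infty}$ and cannot be applied to them. The remedy is to fall back on the monotonicity of $f$ itself, together with $f \ge 0$ and the strict inequality $p_i > x$; this is precisely what makes the two cases collapse into the single clean bound $f(p_m) \le \frac{f(x)}{x}\, p_i$. (For $k=1$ the window reduces to the single prime $p_i > x$, so this case does not even occur.) The other step not to be skipped is the index count $i \ge i_0 + k$ deduced from $x \ge p_{i_0+k-1}$, since it is what guarantees that \eqref{pif} is legitimately available for each gap in the telescoping.
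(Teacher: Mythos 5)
Your proof is correct and follows essentially the same route as the paper: the index count $i \ge i_0+k$ from $x \ge p_{i_0+k-1}$, the telescoping of the $k$ gaps via \eqref{pif}, and the monotonicity of $t \mapsto f(t)/t$ to conclude. The only cosmetic difference is that the paper sidesteps your two-case analysis (and the need to argue $f(x) \ge 0$) by first bounding every $f(p_m)$ by $f(p_i)$, using only that $f$ is increasing and $p_m \le p_i$, and then applying the decrease of $f(t)/t$ a single time at $t = p_i > x$.
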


\begin{proof}
Soit $i \ge 1$ tel que $p_i > x$. Puisque
$x \ge p_{i_0+k-1}$, on a $i \ge i_0+k$. Ecrivons
l'inégalité \eqref{pif} pour $i-1$, $i-2,\, \dots,i-k$; on
obtient
\begin{eqnarray*}
  p_i &\le& p_{i-1} + f(p_i) \\
  p_{i-1} &\le& p_{i-2} + f(p_{i-1}) \le p_{i-2} + f(p_i) \\
&\dots&\\
  p_{i-k+1} &\le& p_{i-k} + f(p_{i-k+1}) \le p_{i-k} + f(p_i).
\end{eqnarray*}
En ajoutant ces inégalités il vient
\[
p_i \le p_{i-k} + k f(p_i)
\]
et, par la décroissance de $f(t)/t$,
\[
\frac{p_{i-k}}{p_i} \ge 1 - k \frac{f(p_i)}{p_i}
\ge 1 - k \frac{f(x)}{x}
\]
ce qui, d'après la définition \eqref{eta1} de $\eta_k$, prouve
\eqref{etak>}.
\end{proof}

\begin{prop}\label{propmin}
\mbox{}
\begin{enumerate}
\item
Il existe $a > 0$ tel que,  pour $x \ge p_k$, ,on ait
\[
\eta_k(x) \ge 1-k \frac{a}{x^{0.475}}\cdot
\]
\item
Soit $i_0 = 33\,609$. Pour $i$ voisin de $i_0$ les valeurs
de $p_i$ sont

\begin{center}
\begin{tabular}{|r|r|r|r|r|r|}
$i=$& $33\, 608$ & $33\,609$& $33\,610$& $33\,611$& $33\,612$\\ 
\hline
$p_i=$& $396\,733$ & $396\,833$ & $396\,871$ & $396\,881$& $396\,883$\\
\hline
\end{tabular}
\end{center}
Pour $x \ge  p_{i_0+k-1}$ on a
\begin{equation}\label{dus25}
\eta_k(x) \ge 1-\frac{k}{25\log^2 x}\cdot
\end{equation}
\item
Si l'hypothèse de Riemann est vraie, on a pour $x \ge \max(p_k,e^2)$,
\begin{equation}\label{lem3.3}
\eta_k(x) \ge 1-\frac{8k}{5}\frac{\log x}{\sqrt x}\cdot
\end{equation}
\item
Sous la conjecture de Cramér \eqref{conjcra}, il existe $a > 0$ tel
que, pour  $x \ge \max(p_k,e^2)$ on ait
\begin{equation}\label{etacra}
\eta_k(x) \ge 1 - ka\frac{\log^2 x}{x}\cdot
\end{equation}

\end{enumerate}
\end{prop}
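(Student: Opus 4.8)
The plan is to obtain all four inequalities as instances of Lemme~\ref{lemink}, applied to four different functions $f$ coming from the prime-gap estimates collected in Section~\ref{ecartsp}. Recall that Lemme~\ref{lemink} converts a bound of the shape $p_{i+1}\le p_i+f(p_{i+1})$ (valid for $i\ge i_0$, with $f$ increasing and $t\mapsto f(t)/t$ decreasing beyond $x$) into $\eta_k(x)\ge 1-k\,f(x)/x$ for $x\ge p_{i_0+k-1}$. Concretely I would take $f(t)=a\,t^{\dt}$ (with $\dt=0.525$) for part~1 from \eqref{bhppi}, $f(t)=\dfrac{t}{25\log^2 t}$ for part~2 from \eqref{label21}--\eqref{duspi}, $f(t)=\dfrac85\sqrt t\,\log t$ for part~3 from \eqref{lab18}, and $f(t)=a\log^2 t$ for part~4 from \eqref{conjcra}. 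In each case the claimed bound is exactly $1-k\,f(x)/x$ after simplifying $f(x)/x$.

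The one recurring technical point is that the estimates \eqref{bhppi}, \eqref{duspi} and \eqref{conjcra} carry $f(p_i)$ (respectively $p_{i+1}-p_i=\bigo{\log^2 p_i}$) on the right-hand side, whereas the hypothesis \eqref{pif} of the lemma requires $f(p_{i+1})$. Since each of the four functions $f$ is increasing on the relevant range, one has $f(p_i)\le f(p_{i+1})$, so this replacement only weakens the inequality and puts it in the form \eqref{pif}; the estimate \eqref{lab18} is already in that form after an index shift, so it needs no such step. I would then check that $t\mapsto f(t)/t$ is decreasing: this holds for all $t>1$ for $f(t)=a\,t^{\dt}$ and for $f(t)=t/(25\log^2 t)$, whereas $\dfrac85\log t/\sqrt t$ and $a\log^2 t/t$ decrease exactly for $t>e^2$. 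This is precisely why the hypothesis $x\ge\max(p_k,e^2)$ appears in parts~3 and~4 but not in parts~1 and~2.

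It then remains to fix $i_0$ and the constant in each part. Part~1 uses \eqref{bhppi} with $i_0=1$, so the conclusion holds for $x\ge p_k=p_{i_0+k-1}$ with the same $a$; part~3 uses \eqref{lab18}, valid from $p_i\ge 3=p_2$, hence again $i_0=1$ and range $x\ge\max(p_k,e^2)$ with the explicit constant $8/5$. For part~2 the Dusart bound \eqref{label21} is valid from $x\ge 396\,738$, and the tabulated values show that $p_{33\,608}=396\,733<396\,738\le 396\,833=p_{33\,609}$, which identifies $i_0=33\,609$ and yields the stated range $x\ge p_{i_0+k-1}$. Finally, in part~4 Cramér's conjecture supplies a constant only for indices beyond some $i_1$; I would apply the lemma for $x\ge p_{i_1+k-1}$ and absorb the finitely many smaller values $x\in[\max(p_k,e^2),p_{i_1+k-1})$ by enlarging $a$ until $1-k a\log^2 x/x$ becomes negative there, where the inequality is trivially true since $\eta_k(x)>0$.

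I expect the only genuinely delicate steps to be the index bookkeeping: justifying the passage from $f(p_i)$ to $f(p_{i+1})$ and, in part~2, correctly translating the threshold $396\,738$ into $p_{i_0+k-1}$ via the table. Everything else reduces to the elementary monotonicity checks above and the mechanical substitution of $f(x)/x$ into \eqref{etak>}.
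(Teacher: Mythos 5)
Your proposal is correct and takes essentially the same approach as the paper: all four parts are instances of Lemme~\ref{lemink} with exactly the same choices $f(t)=at^{0.525}$, $f(t)=t/(25\log^2 t)$ with $i_0=33\,609$, $f(t)=\frac{8}{5}\sqrt{t}\log t$, and $f(t)=a\log^2 t$, together with the same monotonicity checks explaining the threshold $e^2$ in parts~3 and~4. The only cosmetic difference is in part~4, where the paper enlarges $a$ so that $p_{i+1}\le p_i+a\log^2 p_i$ holds for all $i\ge 1$ and takes $i_0=1$, while you absorb the finitely many initial indices at the level of the conclusion; your explicit justification of replacing $f(p_i)$ by $f(p_{i+1})$ merely spells out a step the paper leaves implicit.
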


\begin{proof}
\mbox{}
\begin{enumerate}
\item
On applique le lemme \ref{lemink} avec $i_0 = 1$ et $f(t) = at^{\dt}$
où $\dt = 0.525$ et $a$ est la constante donnée en \eqref{bhppi}.

\item
On choisit $i_0 = 33\,609$ et $f(t) = t/(25\log^2(t))$.
Par \eqref{duspi}, l'hypothèse \eqref{pif} du lemme \ref{lemink}
est vérifiée, et l'application de ce lemme donne le résultat.

\item
Cette fois, on applique le lemme \ref{lemink} avec $i_0 = 1$
et $f(t) = \frac{8}{5}\sqrt{t} \log t$. L'hypothèse \eqref{pif}
résulte de  \eqref{lab18} et la fonction $f(t)/t$ est décroissante
pour $t \ge e^2$.

\item
Par la conjecture de Cramér \eqref{conjcra}, il existe $a$ tel que
$p_{i+1} \le p_i + a\log^2 p_i$ pour tout $i \ge 1$. On choisit
donc $i_0 = 1$ et $f(t) = a \log^2 t$ dans le lemme \ref{lemink}
qui donne le résultat en remarquant que $f(t)/t$ est
décroissante pour $t \ge e^2$.
\end{enumerate}
\end{proof}

\subsubsection{Tabulation des valeurs de $\eta_k$.}\label{tabetak}
On trouvera en annexe les tables \ref{tableeta1}, \ref{tableeta2}
et \ref{tableeta3} qui permettent le calcul de $\eta_1(x)$, $\eta_2(x)$
et $\eta_3(x)$.
Expliquons le calcul des valeurs de $\eta_3$; on obtient la table
de $\eta_k$ pour $k \ne 3$ de façon similaire.

Soit $x_0 = 10^6$. Par la proposition \ref{propmin} on a
\[
\eta_3(x_0) > 1 - \frac{3}{25\,\log^2(x_0)} > 0.99937.
\]
Par la croissance de la fonction $\eta_3$, on a donc pour tout
$x \ge 10^6$, $\eta_3(x) > 0.99937.$ De la définiton
\eqref{eta1} de $\eta_3$ on déduit
\begin{equation}\label{eta1pi}
\eta_3(p_i) = \min\set{\frac{p_{i-2}}{p_{i+1}}, \frac{p_{i-1}}{p_{i+2}},\dots}
= \min\set{\frac{p_{i-2}}{p_{i+1}},\, \eta_3(p_{i+1})}.
\end{equation}
 Soit $i_0$ le plus grand indice tel que $p_{i_0+1 }< 10^6$ et
$p_{i_0-2}/p_{i_0+1} \le 0.99937$ (le calcul donne 
$i_0 = 15\,929$, $p_{i_0-2} = 175\,141$, $p_{i_0-1}= 175\,211$,
$p_{i_0}= 175\,229$, $p_{i_0+1} = 175\,261$).
On a
\begin{equation}\label{imp937}
p_i > p_{i_0} \implies \eta_3(p_i) > 0.99937
\end{equation}
et, par \eqref{eta1pi},
\begin{equation}\label{imp931}
\eta_3(p_{i_0}) = \frac{p_{i_0-2}}{p_{i_0+1}} =
\frac{175\,141}{175\,261} =  0.99931\dots
\end{equation}
ce qui, par \eqref{etakch}, prouve que $p_{i_0+1} = 175\,261$
est un nombre $\eta_3$--champion.

Par la formule \eqref{eta1pi}, on calcule ensuite $\eta_3(p_i)$ pour
$i=i_0-1$, $i_0-2$, $\dots, 1$ et, par \eqref{etakch}, on en déduit
les nombres $\eta_3$--champions.

\subsubsection{La fonction $\dt_3$}

\begin{definition}
On d\'efinit la fonction $\dt_3$, pour $y \ge p_3 = 5$, par
\[
\dt_3(y) = \sup_{x \ge y} (1-\eta_3(x))\log^2 x.
\]
Pour tout $x \ge y$ on a donc
\begin{equation}\label{pd3}
1-\eta_3(x) \le \frac{\dt_3(y)}{\log^2(x)}\cdot
\end{equation}
\end{definition}
La fonction $\dt_3$ est d\'ecroissante.
La minoration \eqref{dus25} de la proposition \ref{propmin}  entraine
\begin{equation}\label{etiq3/25}
\dt_3(396\, 881) \le \frac{3}{25} =  0.12.
\end{equation}

 On a remarqu\'e, lors de la d\'emonstration du lemme \ref{lemetak}, que 
$1-\eta_3$ est constante sur chaque intervalle
$\intfg{p_{i-1}}{p_{i}}$ ($i \ge k+1$).
La fonction $x \mapsto (1-\eta_3(x))\log^2 x$ 
est donc strictement croissante sur l'intervalle  $[p_{i-1}, p_{i})$,
et la borne sup\'erieure de ses valeurs sur cet intervalle
est $(1-\eta_3(p_{i-1}))\log^2(p_{i})$.

Il en r\'esulte que, pour $y \ge 5$,
\begin{equation}\label{d3m}
\dt_3(y) = \max_{p_{i} > y} G(p_i)
\qtx{avec}
G(p_i) = \big(1-\eta_3(p_{i-1})\big)\log^2(p_{i}).
\end{equation}
La formule \eqref{d3m} montre que $\dt_3$ est une fonction en escalier
d\'ecroissante continue \`a droite.
Appelons $\dt_3$--champions les nombres premiers $p$ qui sont points
de discontinuit\'e de $\dt_3$. Nous dirons aussi que $5$ est un 
$\dt_3$--champion.
Si $p < q$ sont deux nombres $\dt_3$--champions cons\'ecutifs, on a pour
$p \le y < q$
\[
\dt_3(y) = \dt_3(p) = \dt_3(q^-) = G(q) > \dt_3(q)
\]
et, si $p > 5$, $p$ est le plus grand nombre premier inf\'erieur \`a
$q$ tel que $G(p) > G(q)$.

Il r\'esulte de \eqref{d3m} et \eqref{etiq3/25} que,
pour $p_i > 396\,881$, on a
$G(p_i) \le \dt_3(396\,881) < 0.12$.
Lorsque $175\,261 < p_i \le 396\,881$,
l'implication \eqref{imp937}  donne 
$\eta_3(p_{i-1}) \ge 0.99937$ et 
$G(p_i) \le 0.00063 \log^2(396\,881) < 0.12$.

\`A l'aide de la table construite au paragraphe \ref{tabetak}, pour $7
\le p_i \le 175\,261$, on sait calculer $\eta_3(p_{i-1})$ et
$G(p_i)$. On recherche alors le plus grand nombre premier $p_i <
396\,881$ pour lequel on a $G(p_i) > 0.12$. C'est $p_i =
88\,211$ qui est un nombre $\dt_3$--champion. Si $q$ est un
$\dt_3$--champion, le nombre $\dt_3$--champion pr\'ec\'edant $q$ est le
plus grand nombre premier $p$ tel que $G(p) >
G(q)$.  En \'enum\'erant les nombres premiers $p$ inf\'erieurs \`a
$88\,211$ on peut ainsi dresser la table \ref{tabledelta3} des nombres
$\dt_3$--champions.

\section{Facteurs premiers de $g(n)$ et $g$--couples}\label{gsuite1}

\begin{notation}
Pour tout intervalle réel $I$ et tout réel $\lb$ on note
$\lb I$ l'intervalle défini par $\lb I = \setof{\lb x}{x \in I}$.
\end{notation}

Les deux lemmes suivants, dont la démonstration est facile,
se trouvent respectivement dans 
(\cite{NICAA}, p. 142) et (\cite{GRA}, p. 408).

\begin{lem}\label{lemnicolas}
Si $q$ divise $g(n)$ et si $p', \sg{p}, q$ sont premiers distincts
et $q \ge p' + \sg{p}$ alors $p'$ ou  $\sg{p}$ divise $g(n)$.
\end{lem}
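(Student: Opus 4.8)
The plan is to argue by contraposition, using the extremal characterisation \eqref{mpgn}: if $q\mid g(n)$ while $p'\nmid g(n)$ and $p''\nmid g(n)$, I will exhibit an integer exceeding $g(n)$ whose value under $\ell$ is still $\le n$. Let $q^{a}$ be the exact power of $q$ dividing $g(n)$ (so $a\ge 1$), and consider $M=g(n)\,p'p''/q^{a}$; since $p',p''$ are distinct from $q$ and do not divide $g(n)$, they occur in $M$ to the first power and $q$ not at all. Additivity of $\ell$ and \eqref{ldef} give $\ell(M)=\ell(g(n))-q^{a}+p'+p''$, and because $q^{a}\ge q\ge p'+p''$ we obtain $\ell(M)\le\ell(g(n))\le n$ by \eqref{lgn}. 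As $p',p'',q$ are distinct primes, $p'p''\ne q^{a}$, so exactly one of $p'p''>q^{a}$ or $p'p''<q^{a}$ holds.

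If $p'p''>q^{a}$ then $M>g(n)$ with $\ell(M)\le n$, already contradicting \eqref{mpgn}. The substantive case is $p'p''<q^{a}$, where the plain swap decreases the product. Here I would reinvest the $\ell$-budget freed by deleting $q^{a}$ by raising $p'$ from the first power to a higher power $p'^{c}$ inside $M$: I look for a power of $p'$ in the interval $\bigl(q^{a}/p'',\,q^{a}-p''\bigr]$. If such a $p'^{c}$ exists, then $M':=g(n)\,p'^{c}p''/q^{a}$ satisfies $M'>g(n)$ (from the left endpoint $p'^{c}p''>q^{a}$), while $\ell(M')=\ell(g(n))-q^{a}+p''+p'^{c}\le\ell(g(n))\le n$ (from the right endpoint $p'^{c}\le q^{a}-p''$), once again contradicting \eqref{mpgn}.

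The hard part is guaranteeing that this interval really contains a power of $p'$. A power of $p'$ lies in $\bigl(q^{a}/p'',\,q^{a}-p''\bigr]$ as soon as the ratio of its endpoints is $\ge p'$; that ratio equals $p''\bigl(1-p''/q^{a}\bigr)$, which is $\ge p'$ exactly when $q^{a}\ge p''^{2}/(p''-p')$. Under the standing hypothesis $q^{a}>p'p''$ this in turn follows from the purely numerical inequality $p'p''\ge p''^{2}/(p''-p')$, i.e. $p''\ge p'^{2}/(p'-1)$, which a short check shows to fail for the single pair $(p',p'')=(2,3)$ only.

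Thus the main obstacle is not a single delicate estimate but the bookkeeping that isolates this lone exceptional configuration. For $(p',p'')=(2,3)$ the condition $p'p''<q^{a}$ together with $q^{a}<p''^{2}/(p''-p')=9$ forces $q=7$ (and $a=1$), which I would dispose of by hand: taking $c=2$ gives $M'=12\,g(n)/7>g(n)$ with $\ell(M')=\ell(g(n))\le n$. In every other case the inequality $q^{a}>p'p''\ge p''^{2}/(p''-p')$ holds, the reinvestment step applies verbatim, and the contradiction with \eqref{mpgn} completes the proof.
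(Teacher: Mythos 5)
Your proof is correct. Note that the paper itself gives no argument for this lemma: it is stated with a pointer to (\cite{NICAA}, p.~142) as having an easy proof, so the only meaningful comparison is with the exchange technique the paper uses elsewhere, e.g.\ in its proof of Lemma \ref{lemgrantham}, and your argument is exactly of that type: perturb $g(n)$, keep $\ell\le n$ via \eqref{lgn} and additivity of $\ell$, and contradict \eqref{mpgn}. Two points you leave implicit should be stated. First, your inequality $q^{a}\ge p''^{2}/(p''-p')$ tacitly presupposes $p''>p'$; since the statement is symmetric in $p'$ and $p''$ you may of course relabel, but say so, and it is worth observing that the prime whose exponent you raise \emph{must} be the smaller one, because the endpoint ratio $p''\left(1-p''/q^{a}\right)$ is always $<p''$ and so could never reach the larger prime. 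Second, the criterion ``an interval whose endpoint ratio is $\ge p'$ contains a power of $p'$'' is valid only if the interval is not entirely below $p'$ itself: one must also check that the upper endpoint satisfies $q^{a}-p''\ge p'$, which is immediate from $q^{a}\ge q\ge p'+p''$; the power found then satisfies $p'^{c}>q^{a}/p''>p'$, so $c\ge 2$ and $M'$ is a genuine integer with $p'^{c}$ its exact $p'$-part (using $p'\nmid g(n)$). With these remarks inserted the bookkeeping is airtight: equality $p'p''=q^{a}$ is excluded by unique factorization; the numerical inequality $p''\ge p'^{2}/(p'-1)$ indeed fails only for $(p',p'')=(2,3)$ (for $p'\ge 3$ one has $p''\ge p'+2>p'+1+1/(p'-1)$); and in the exceptional case $6<q^{a}<9$ with $q\notin\{2,3\}$ forces $q^{a}=7$ (note $8=2^{3}$ is ruled out since $q\ne 2$), which your explicit $M'=12\,g(n)/7$, with $\ell(M')=\ell(g(n))$, disposes of.
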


\begin{lem}\label{lemnicolas2}
Si $q$ est un diviseur premier de $g(n)$, il existe au  plus un nombre
premier $\le q/2$ qui ne divise pas $g(n)$.
\end{lem}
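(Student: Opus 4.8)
The plan is to deduce this immediately from Lemma~\ref{lemnicolas} by contradiction. Suppose the conclusion fails. Then there exist at least two distinct primes, say $p'$ and $\sg{p}$ with $p' \ne \sg{p}$, both satisfying $p' \le q/2$ and $\sg{p} \le q/2$, and neither of which divides $g(n)$. I would first record that the three primes $p'$, $\sg{p}$, $q$ are pairwise distinct: $p'$ and $\sg{p}$ differ by hypothesis, and each of them is at most $q/2 < q$, hence different from $q$.

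Next I would verify the arithmetic hypothesis of Lemma~\ref{lemnicolas}, namely $q \ge p' + \sg{p}$. This is immediate, since $p' + \sg{p} \le q/2 + q/2 = q$. With the three primes distinct and the inequality $q \ge p' + \sg{p}$ in hand, Lemma~\ref{lemnicolas} applies and forces $p'$ or $\sg{p}$ to divide $g(n)$, contradicting the choice of $p'$ and $\sg{p}$. Hence no two such primes can exist, and at most one prime $\le q/2$ can fail to divide $g(n)$, which is exactly the assertion.

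The only points requiring care are boundary effects, and they are minor. When $q$ is odd, a prime $\le q/2$ is in fact strictly less than $q/2$ (as $q/2$ is not an integer), so the bound $q \ge p' + \sg{p}$ is even comfortably strict; when $q = 2$ there is no prime $\le q/2 = 1$ at all, so the statement is vacuously true. The substantive content is therefore entirely carried by Lemma~\ref{lemnicolas}, and I do not anticipate any genuine obstacle beyond confirming distinctness of the three primes so that the lemma is applicable; the sum condition $q \ge p' + \sg{p}$ is precisely what makes $q/2$ the natural threshold in the statement.
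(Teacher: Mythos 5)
Your proof is correct: all hypotheses of Lemma \ref{lemnicolas} are duly checked (pairwise distinctness, since $p', \sg{p} \le q/2 < q$, and $p' + \sg{p} \le q/2 + q/2 = q$), and the contradiction is immediate, with the boundary cases handled sensibly. This matches the paper, which states the lemma without proof --- citing (\cite{GRA}, p.~408) and noting the proof is easy --- since the easy argument alluded to is exactly this deduction from Lemma \ref{lemnicolas}.
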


\begin{definition}
Soit $\gm, \gm'$ avec $0 < \gm < \gm'< 1$ et $\gm' <
\dfrac{1+\gm^2}{2}$.  On définit
\begin{equation}\label{defalphabeta}
\al = 2\gm'-1 \qtx{et} \bt = \gm^2.
\end{equation} 
On a alors $\al < \bt< \gm < \gm'$ et le couple d'intervalles $(I,J)$
défini par
\[
I = I(\gm,\gm') = \intfd{\al}{\beta}
\text{ et }
J = J(\gm,\gm') = \intfd{\gm}{\gm'}.
\]
est appelé le g-couple associé à $(\gm,\gm')$.
\end{definition}

En remarquant que \dsm{\gm = \sqrt{\bt}} et que \dsm{\gm'=\frac{1+\al}{2}}
le lemme suivant est le lemme 2 dans \cite{GRA}, dont nous rappelons
la preuve.

\begin{lem}\label{lemgrantham}
Soit $(I,J)$ un g-couple,
$n \ge 1$ et $q$ un facteur premier de $g(n)$.
Si \dsm{q I} contient au moins un diviseur
premier de $g(n)$ tous les nombres premiers
appartenant à $qJ$, à l'exception d'au plus un,
sont des diviseurs de $g(n)$.
\end{lem}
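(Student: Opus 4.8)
Le plan est de raisonner par l'absurde, en fabriquant à partir de deux nombres premiers de $qJ$ ne divisant pas $g(n)$ un entier $M > g(n)$ tel que $\ell(M) \le n$, ce qui contredira \eqref{mpgn}. Supposons donc qu'il existe deux nombres premiers distincts $p'$ et $p''$ dans $qJ = \intfd{\gm q}{\gm' q}$ ne divisant pas $g(n)$, et notons $p_0$ un diviseur premier de $g(n)$ appartenant à $qI = \intfd{\al q}{\bt q}$, dont l'existence est assurée par l'hypothèse. Comme $0 < \gm' < 1$ et $\bt = \gm^2 < \gm$, on a
\[
p_0 \le \bt q < \gm q < p',\ p'' \le \gm' q < q,
\]
de sorte que $p_0,\,p',\,p'',\,q$ sont deux à deux distincts, $p_0$ et $q$ divisant $g(n)$ alors que $p'$ et $p''$ ne le divisent pas.

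Je poserais ensuite $M = g(n)\,p'p''/(q\,p_0)$, c'est-à-dire l'entier obtenu en ôtant à $g(n)$ un facteur $q$ et un facteur $p_0$ et en lui adjoignant $p'$ et $p''$. Puisque $p'$ et $p''$ ne divisent pas $g(n)$, ils apparaissent dans $M$ avec l'exposant $1$ et contribuent exactement $p' + p''$ à $\ell(M)$. Par ailleurs, si $r \in \set{q,\,p_0}$ et si $a \ge 1$ désigne l'exposant de $r$ dans $g(n)$, ôter un facteur $r$ fait passer la contribution de $r$ à $\ell$ de $\ell(r^a)=r^a$ à $\ell(r^{a-1})$, soit une baisse d'au moins $r$ : pour $a=1$ on utilise $\ell(r^0)=0$, et pour $a \ge 2$ on a $r^a - r^{a-1} = r^{a-1}(r-1) \ge r$. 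On en tire
\[
\ell(M) \le \ell(g(n)) - q - p_0 + p' + p''.
\]

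Le cœur de la preuve consiste alors à vérifier les deux inégalités décisives, qui ne sont autres que les traductions des relations $\gm'=(1+\al)/2$ et $\gm=\sqrt{\bt}$ rappelées avant l'énoncé. D'une part, $p' + p'' \le 2\gm' q = (1+\al)q = q + \al q < q + p_0$ puisque $p_0 > \al q$ ; joint à \eqref{lgn}, ceci donne $\ell(M) \le n$. D'autre part, $p'p'' > (\gm q)^2 = \bt q^2 \ge q\,p_0$ puisque $p_0 \le \bt q$ ; d'où $M > g(n)$. Les deux faits $M > g(n)$ et $\ell(M) \le n$ contredisent \eqref{mpgn}, ce qui interdit l'existence de deux tels nombres premiers et prouve le lemme.

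La seule difficulté que j'anticipe est de garder le calcul de $\ell(M)$ correct sans rien supposer sur les exposants de $q$ et de $p_0$ dans $g(n)$ : c'est exactement ce qu'assure la minoration $\ell(r^a)-\ell(r^{a-1}) \ge r$ obtenue ci-dessus. Une fois cette minoration en place, tout le reste se réduit à reporter les définitions de $\al,\bt,\gm,\gm'$ du g-couple dans les deux inégalités ci-dessus.
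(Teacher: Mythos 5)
Votre démonstration est correcte et suit essentiellement la même démarche que l'article : supposer deux nombres premiers $p'$ et $p''$ de $qJ$ ne divisant pas $g(n)$, former $M = g(n)\,p'p''/(q\,p_0)$ où $p_0 \in qI$ divise $g(n)$, puis utiliser $2\gm' = 1+\al$ et $\bt = \gm^2$ pour obtenir $\ell(M) \le n$ et $M > g(n)$, en contradiction avec \eqref{mpgn}. Votre traitement des exposants (la contribution à $\ell$ baisse d'au moins $r$ quand on ôte un facteur $r$) est même plus soigneux que le texte, qui écrit l'égalité $\ell(M)-\ell(g(n)) = p+p'-q-q'$, valable telle quelle seulement lorsque $q$ et $q'$ figurent à l'exposant $1$ dans $g(n)$.
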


\begin{proof}
Par hypothèse, $qI$ contient un diviseur premier $q'$ de $g(n)$ et donc
$\al q < q' \le \bt q < q$. S'il existait dans $qJ$ deux nombres
premiers
$p$ et $p'$ ne divisant pas $g(n)$, on aurait
$\gm q < p < p' \le \gm' q$. En posant $M = \dfrac{pp'}{qq'}g(n)$,
il viendrait $\ell(M) - \ell(g(n)) = p+p'-q-q' \le 2\gm' q - q - \al
q= 0$ et $M > \dfrac{\gm^2 q^2}{\bt q^2} g(n) = g(n)$, en
contradiction avec \eqref{glandau} et \eqref{lgn}. 
\end{proof}

\begin{definition}\label{defgsuite}
Une $g$--suite de longueur $\ell$ ($1 \le \ell \le +\infty$)
est définie par la donnée  de \dsm{(\gm_k)_{0 \le k \le \ell+1}} 
satisfaisant 
$\gm_0 = 0$, $\gm_1 = \dfrac{1}{2}$ et , pour $1 \le k \le \ell$
\begin{equation}\label{condgamma}
0 < \gm_k < 1
\text{\quad et \quad}
\gm_k < \gm_{k+1}<  \frac{1+\gm_k^2}{2}\cdot
\end{equation}
On lui associe les intervalles 
$I_0 = \intfd{\al_0}{\bt_0} = \intfd{0}{\frac{1}{4}}$, 
$J_0 = \intfd{0}{\frac{1}{2}}$ et,
pour $1 \le k \le \ell$, $I_k$ et $J_k$ définis par
\begin{equation}\label{defIJ}
\al_k = 2\gm_{k+1}-1,\
\bt_k = \gm_k^2,\
I_k = \intfd{\al_k}{\bt_k}
\text{ et }
J_k = \intfd{\gm_k}{\gm_{k+1}}\cdot
\end{equation}
\end{definition}

Il résulte de \eqref{defIJ} que, pour tout $1 \le k \le \ell$, le couple
$(I_k, J_k)$ est un $g$--couple.

Nous étudierons au paragraphe \ref{gsuiteuniforme} 
les \emph{g-suites uniformes}
pour lesquelles le rapport $\al_k/\bt_k$ est constant.

\begin{definition}\label{def5}
Soit une $g$--suite \dsm{(\gm_k)_{0 \le k \le \ell+1}} de longueur
finie $\ell \ge 1$ et $y \ge 12$.
Pour $1 \le k \le \ell$ on note $m_k$ le cardinal de
l'ensemble des indices $j \in \set{0,1,\dots,\ell}$ tels que $I_k \cap
J_j \ne \emptyset$. 

La $g$--suite \dsm{(\gm_k)_{0 \le k \le \ell+1}} est
\emph{$y$-admissible} si pour tout réel $\lb \ge y$ et tout $k$,
$1 \le k \le \ell$, l'intervalle $\lb I_k$ contient au moins $m_k+1$ nombres
premiers, autrement dit, d'après le lemme \ref{lemetak}, si, pour $1 \le k
\le \ell$, on a $\al_k \le \eta_{m_k+1}(\bt_k y) \bt_k$.
\end{definition}

\begin{rem}\label{m1rem}
Puisque $\bt_1 = \gm_1^2 = 1/4$ est contenu dans 
\dsm{J_0 = \intfd{0}{\frac{1}{2}}} on a $m_1 = 1 $.
Pour que la $g$--suite de longueur 1, 
\dsm{\intfd{\al_1}{\frac{1}{4}}, \intfd{\frac{1}{2}}{\frac{1+\al_1}{2}}}
soit $y$--admissible il suffit que pour tout $\lb \ge y$ l'intervalle
\dsm{\intfd{\lb\al_1}{\lb/4}} contienne au moins deux nombres
premiers, c'est à dire que $\al_1 \le \eta_2(y/4)/4$.
En par\-ticulier, pour $\lb=y$, l'intervalle $\intfd{\al_1 y}{y}$
contient au moins $2$ nombres premiers, ce qui nécessite
$y/4 \ge 3$.
\end{rem}

\begin{prop}\label{propg}
Soit $q = \Pplus(g(n))$ le plus grand facteur premier de $g(n)$ et
\dsm{(\gm_k)_{0 \le k \le \ell+1}} une $g$--suite $q$--admissible
de longueur $\ell$.
Alors, pour $0 \le k \le \ell$ l'intervalle
\dsm{qJ_k = q\intfd{\gm_k}{\gm_{k+1}}} contient au
plus un nombre premier qui ne divise pas $g(n)$. De plus
\begin{equation}\label{majqg}
q  \le 
\dfrac{\log g(n)}{\dfrac{\theta(q\gm_{k+1})}{q} -k\dfrac{\log q}{q}
- \dfrac{\sum_{j=1}^{k+1} \log \gm_{j}}{q}} 
\le 
\dfrac{\log g(n)}{\dfrac{\theta(q\gm_{k+1})}{q} -k\dfrac{\log q}{q}
\vphantom{\dfrac{\sum_{j=1}^{k+1} \log \gm_{j}}{q}}}
\cdot
\end{equation}
\end{prop}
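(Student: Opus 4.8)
Le plan est d'établir d'abord l'assertion sur les intervalles $qJ_k$ par récurrence sur $k$, puis d'en déduire \eqref{majqg} par un comptage des facteurs premiers de $g(n)$ inférieurs à $q\gm_{k+1}$.

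Pour la première partie, je procéderais par récurrence sur $k$, de $0$ à $\ell$. Le cas $k=0$ est immédiat~: comme $qJ_0 = \intfd{0}{q/2}$ et que $q=\Pplus(g(n))$ divise $g(n)$, le lemme \ref{lemnicolas2} affirme qu'au plus un nombre premier $\le q/2$ ne divise pas $g(n)$. Pour l'étape de récurrence avec $1 \le k \le \ell$, l'idée est d'appliquer le lemme \ref{lemgrantham} au $g$--couple $(I_k,J_k)$~: il suffit de vérifier que $qI_k$ contient au moins un diviseur premier de $g(n)$. Or la $q$--admissibilité (définition \ref{def5}, prise avec $\lb = q$) assure que $qI_k$ contient au moins $m_k+1$ nombres premiers. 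Comme $\bt_k = \gm_k^2 < \gm_k$, tout indice $j$ tel que $I_k \cap J_j \ne \emptyset$ vérifie $\gm_j < \bt_k < \gm_k$, donc $j < k$~; les $m_k$ intervalles $J_j$ rencontrant $I_k$ sont ainsi tous déjà traités par l'hypothèse de récurrence et ne contiennent chacun qu'au plus un nombre premier ne divisant pas $g(n)$. Puisque tout nombre premier de $qI_k$ tombe dans l'un de ces $qJ_j$, au plus $m_k$ des $m_k+1$ nombres premiers de $qI_k$ échappent à $g(n)$~: il en reste au moins un qui divise $g(n)$. Le lemme \ref{lemgrantham} donne alors qu'au plus un nombre premier de $qJ_k$ ne divise pas $g(n)$, ce qui achève la récurrence.

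Pour la majoration \eqref{majqg}, je partirais de la partition $\intfd{0}{q\gm_{k+1}} = \bigcup_{j=0}^{k} qJ_j$ (car $\gm_0 = 0$). D'après la première partie, chacun des $k+1$ intervalles $qJ_0,\dots,qJ_k$ contient au plus un nombre premier ne divisant pas $g(n)$, et ce nombre premier, s'il existe, est $\le q\gm_{j+1}$, donc de logarithme au plus $\log q + \log\gm_{j+1}$. En retranchant ces contributions de $\theta(q\gm_{k+1}) = \sum_{p \le q\gm_{k+1}} \log p$, la somme des $\log p$ sur les diviseurs premiers $p \le q\gm_{k+1}$ de $g(n)$ est au moins $\theta(q\gm_{k+1}) - (k+1)\log q - \sum_{j=1}^{k+1}\log\gm_j$. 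Le point qui fournit exactement le coefficient $k$ (et non $k+1$) devant $\log q$ consiste à ajouter la contribution de $q$ lui-même~: comme $q$ divise $g(n)$ et que $q > q\gm_{k+1}$,
\[
\log g(n) \ge \log q + \sum_{\substack{p \mid g(n) \\ p \le q\gm_{k+1}}} \log p
\ge \theta(q\gm_{k+1}) - k\log q - \sum_{j=1}^{k+1}\log\gm_j .
\]
En divisant par $q$ puis en inversant (le dénominateur étant positif dans le domaine considéré), on obtient la première inégalité de \eqref{majqg}~; la seconde en résulte car $\sum_{j=1}^{k+1}\log\gm_j < 0$ (puisque $\gm_j < 1$), de sorte que le terme $-\frac1q\sum_{j=1}^{k+1}\log\gm_j$ est positif et que le supprimer diminue le dénominateur, augmentant donc la fraction.

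Le principal écueil me paraît double~: d'une part, s'assurer que la récurrence est bien fondée, c'est-à-dire que les intervalles $J_j$ rencontrant $I_k$ ont tous un indice strictement inférieur à $k$ (ce qui repose précisément sur $\bt_k = \gm_k^2 < \gm_k$)~; d'autre part, ne pas omettre le terme $\log q$ provenant de $q$ lui-même, faute de quoi on obtiendrait le coefficient $k+1$ au lieu de $k$ devant $\log q$.
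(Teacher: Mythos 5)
Votre démonstration est correcte et suit essentiellement la même démarche que celle de l'article~: récurrence sur $k$ avec le lemme \ref{lemnicolas2} pour le cas $k=0$, le lemme \ref{lemgrantham} combiné à la $q$--admissibilité et à l'inclusion $I_k \subset \bigcup_{j<k} J_j$ (via $\bt_k = \gm_k^2 < \gm_k$) pour l'étape de récurrence, puis le même comptage où la contribution $\log q$ de $q$ lui-même ramène le coefficient de $k+1$ à $k$. La seule différence est cosmétique (l'article traite $k=1$ séparément et écrit le comptage sous forme multiplicative), et votre remarque finale sur les deux écueils correspond exactement aux points clefs de la preuve originale.
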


\begin{proof}
Soit $\cp(k)$ la propriété \emph{ Il existe au plus un nombre premier
  dans $qJ_k$ qui ne divise pas $g(n)$.}

Pour $k=0$, par le lemme \ref{lemnicolas2}, l'intervalle \dsm{q J_0 =
  \intfd{0}{\frac{q}{2}}} contient au plus un nombre premier qui ne
divise pas $g(n)$ . Ainsi $\cp(0)$ est vrai.

L'intervalle $qI_1 = q\intfd{\al_1}{\frac{1}{4}}$ est contenu dans
$\intfd{0}{q/2}$ qui, par le lemme \ref{lemnicolas2}, contient au plus
un nombre premier ne divisant pas $g(n)$. Compte tenu de la définition
\ref{def5} et de la remarque \ref{m1rem}, $q I_1$ contient au moins
$m_1+1 = 2$ nombres premiers. L'un de ces 2 nombres divise $g(n)$, et
par le lemme \ref{lemgrantham}, il y a au plus un nombre premier dans
$qJ_1$ qui ne divise pas $g(n)$.  Ainsi $\cp(1)$ est vrai.

Supposons $k < \ell$ et $\cp(0)$, $\cp(1)$, \dots,$\cp(k-1)$ vrais.
La borne supérieure de $I_k$ est \dsm{\beta_k = \gm_k^2 < \gm_k.} On a
donc \dsm{ I_k \subset \bigcup_{j=0}^{k-1} J_{j}.  } Par l'hypothèse
de récurrence, chacun des $q J_{j}$ contient au plus un nombre premier
qui ne divise pas $g(n)$. Puisque $q I_k$ rencontre $m_k$ intervalles
$q J_{j}$ il contient au plus $m_k$ nombres premiers qui ne divisent pas
$g(n)$. Or, par définition de la $q$--admissibilité, $q I_k$ contient au
moins $m_k+1$ nombres premiers. L'un d'entre eux divise $g(n)$ et, par
le lemme \ref{lemgrantham}, $J_k$ contient au plus un nombre premier
qui ne divise pas $g(n)$.  C'est--à--dire que $\cp(k)$ est vrai.

On vient de prouver que $g(n)$ est divisible par tous
les nombres premiers de 
$\intfd{0}{q\gm_{k+1}}$ sauf au plus un nombre
premier $q_j \in q\intfd{\gm_{j}}{\gm_{j+1}}$
pour chaque $j=0,1,2,\dots,k$.
Puisque $q$ divise $g(n)$ on a donc
\[
g(n)
\ge\; q\,\frac{\prod_{p\, \le \, q \gm_{k+1}}\, p}{\prod_{j=0}^{k}\, q_j}\;
\ge\; q\,\,\frac{\prod_{p\, \le\, q \gm_{k+1}}\, p}{\prod_{j=0}^{k}\, q\gm_{j+1}}\cdot
\]
On en déduit
\dsm{
\log g(n) \ge \theta(q \gm_{k+1}) 
- \sum_{j=1}^{k+1} \log\gm_{j}
- k\log q.
}
Soit
\[
q\left(
\frac{\theta(q \gm_{k+1})}{q}
- \frac{1}{q}\sum_{j=1}^{k+1} \log\gm_{j}
- k \frac{\log q}{q}
\right)
\le \log g(n).
\]
C'est la première majoration de \eqref{majqg}.
La deuxième résulte de \dsm{\sum_{j=1}^{k+1}\log\gm_j < 0}.
\end{proof}

\begin{prop}\label{prop3}
Soit trois nombres réels positifs $n_0$, $y$, $a$
vérifiant
\[
12 \le y \le a \sqrt{n_0 \log n_0}
\]
et $k \ge 1$ un nombre entier. Faisons l'hypothèse qu'il
existe une $g$--suite $\gm_0,\gm_1,\dots,\gm_{k+1}$, $y$--admissible
de longueur $k$, définissons 
\begin{equation}\label{den}
D_k = \gm_{k+1} \thmin(y\gm_{k+1})-k\frac{\log y}{y}
- \frac{\sum_{j=1}^{k+1}\log \gm_j}{y}
\end{equation}
et supposons $D_k > 0$. Alors, pour $n \ge n_0$, on a
\begin{equation}\label{l54}
\Pplus(g(n)) \le \max(a,b)\sqrt{n\log n}
\qtx{ avec }
b = \frac{1.05314}{D_k}\cdot
\end{equation}
De plus, si $n \ge n_0 \ge 68\,745\,487$, on a
\begin{equation}\label{l54bis}
\Pplus(g(n)) \le \max(a,b')\sqrt{n\log n}
\qtx{avec}
b' = \frac{1}{D_k}\left(1+\frac{\log_2 n_0 - 0.975}{2\log n_0}\right)
\cdot
\end{equation}
\end{prop}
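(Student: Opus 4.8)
The plan is to argue by a dichotomy on $q:=\Pplus(g(n))$. If $q\le a\sqrt{n\log n}$ then $q\le\max(a,b)\sqrt{n\log n}$ and there is nothing to do, so I assume $q>a\sqrt{n\log n}$. Since $t\mapsto\sqrt{t\log t}$ is increasing and $n\ge n_0$, this gives $q>a\sqrt{n_0\log n_0}\ge y\ge 12$; in particular $q\ge y$, which is exactly what is needed to invoke Proposition \ref{propg}. The whole argument then consists in converting the bound \eqref{majqg}, which depends on $q$, into the uniform denominator $D_k$ of \eqref{den}.

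First I would verify the hypotheses of Proposition \ref{propg}. The $g$-suite is $y$-admissible by assumption, and since the admissibility condition of Definition \ref{def5} is imposed for every $\lambda\ge y$, it holds a fortiori for every $\lambda\ge q$; hence the suite is $q$-admissible. Applying Proposition \ref{propg} with $\ell=k$ and taking its running index equal to $k$, the first inequality of \eqref{majqg} yields $q\le\log g(n)/h(q)$ with
\[
h(q)=\frac{\theta(q\gamma_{k+1})}{q}-k\frac{\log q}{q}-\frac1q\sum_{j=1}^{k+1}\log\gamma_j.
\]
It is essential to keep this full form rather than the simpler second form of \eqref{majqg}, since $D_k$ retains the term in $\sum_j\log\gamma_j$.

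The heart of the proof, and the step I expect to be the main obstacle, is to show $h(q)\ge D_k$ for every $q\ge y$. For the first term I would write $\theta(q\gamma_{k+1})/q=\gamma_{k+1}\,\theta(q\gamma_{k+1})/(q\gamma_{k+1})$ and use $q\gamma_{k+1}\ge y\gamma_{k+1}$ with the definition \eqref{thmin} of $\thmin$ to get $\theta(q\gamma_{k+1})/q\ge\gamma_{k+1}\thmin(y\gamma_{k+1})$ (note $\gamma_{k+1}>\gamma_1=1/2$, so $y\gamma_{k+1}>6$ and $\thmin$ is legitimately evaluated). The subtlety is the other two terms: replacing $q$ by $y$ lowers $k\log q/q$ the right way, but the positive quantity $-\frac1q\sum_j\log\gamma_j$ decreases in $q$, so bounding it separately fails. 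The fix is to group them: with $c=\sum_{j=1}^{k+1}\log\gamma_j<0$ set $\psi(q)=(k\log q+c)/q$, so that $h(q)=\gamma_{k+1}\,\theta(q\gamma_{k+1})/(q\gamma_{k+1})-\psi(q)$ and $D_k=\gamma_{k+1}\thmin(y\gamma_{k+1})-\psi(y)$. Since $\psi'(q)=(k-k\log q-c)/q^2$, the function $\psi$ is nonincreasing on $[y,+\infty)$ as soon as $\log y\ge 1-c/k$; and because each $\gamma_j\ge\gamma_1=1/2$ gives $-c=\sum_j\log(1/\gamma_j)\le(k+1)\log 2$, one has $1-c/k\le 1+(1+1/k)\log 2\le 1+2\log 2<\log 12\le\log y$. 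Hence $\psi(q)\le\psi(y)$, and combining with the first term gives $h(q)\ge D_k>0$.

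Finally I would substitute into $q\le\log g(n)/h(q)\le\log g(n)/D_k$ and insert the effective majorations of $\log g(n)$ recalled in \S\ref{rappelsgn}. For \eqref{l54} I would use \eqref{L1}, giving $q\le 1.05314\sqrt{n\log n}/D_k=b\sqrt{n\log n}$. For \eqref{l54bis} I would instead use \eqref{L3}, whose prefactor $1+(\log_2 n-0.975)/(2\log n)$ is decreasing in $n$ on the whole range $n\ge n_0$ (its derivative changes sign far below $68\,745\,487$), so it is bounded above by its value at $n_0$; the hypothesis $n_0\ge 68\,745\,487$ is the regime singled out after \eqref{L4} and is what makes this the pertinent estimate, and one obtains $q\le b'\sqrt{n\log n}$. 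In both cases the trivial branch $q\le a\sqrt{n\log n}$ supplies the $\max(a,\cdot)$.
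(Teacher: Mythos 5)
Your proof is correct and takes essentially the same route as the paper's: the dichotomy reducing to $q\ge y$, the a fortiori $q$--admissibility, the full first inequality of \eqref{majqg} from Proposition \ref{propg}, the minoration $\theta(q\gm_{k+1})/q\ge\gm_{k+1}\thmin(y\gm_{k+1})$, and the decrease of $t\mapsto\bigl(k\log t+\sum_{j=1}^{k+1}\log\gm_j\bigr)/t$ established via $\sum_j\abs{\log\gm_j}\le(k+1)\log 2$ (the paper checks this for $t\ge 4e$, you for $t\ge 12$, which is the same computation), followed by \eqref{L1} for \eqref{l54} and \eqref{L3} for \eqref{l54bis}. Your explicit verification that the prefactor of \eqref{L3} is decreasing in $n$ on $[n_0,+\infty)$ is a detail the paper leaves implicit in its ``la preuve est similaire'' remark, and it is accurate.
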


\begin{proof}
Soit $n \ge n_0$ et posons pour simplifier $q = \Pplus(g(n))$.

\noindent
Ou bien $q < y$ et alors
\begin{equation}\label{lab51}
q < y \le a \sqrt{n_0  \log n_0} \le a \sqrt{n \log n}
\end{equation}
ou bien $q \ge y$ et la suite $\gm_0, \gm_1,\dots,\gm_{k+1}$
est a fortiori $q$--admissible. La majoration \eqref{majqg}
de la proposition \ref{propg} donne alors 
\begin{equation}\label{qpetit}
q  \le
\dfrac{\log g(n)}{
\dfrac{\theta(q\gm_{k+1})}{q}
 -k\dfrac{\log q}{q}
- \dfrac{\sum_{j=1}^{k+1} \log \gm_{j}}{q}}.
\end{equation}
On minore \dsm{\theta(q\gm_{k+1})/q} par
\dsm{\gm_{k+1}\thmin(y\gm_{k+1})}
(cf. \S\,  \ref{sectionthmin}).
On remarque aussi que 
\dsm{\sum_{j=1}^{k+1} \abs{\log \gm_{j}} \le (k+1)\log 2  \le 2k\log 2}
et que la fonction 
$t \mapsto (k \log t + \sum_{j=1}^{k+1} \log \gm_j)/t$
est décroissante pour $t \ge 4e$.
La majoration \eqref{qpetit} entraine donc 
$q \le (\log g(n))/D_k$.

On utilise enfin l'inégalité \eqref{L1} qui
donne  la majoration
$q \le b \sqrt{n\log n}$, qui, avec \eqref{lab51}, prouve
\eqref{l54}.
La preuve de \eqref{l54bis} est similaire en majorant
$\log g(n)$ à l'aide de \eqref{L3} au lieu de \eqref{L1}.
\end{proof}

\section{La g-suite $y$--admissible optimale}\label{gsuite2}

La majoration \eqref{majqg} de la proposition \ref{propg} nous conduit
à construire des g-suites $y$--admissibles dont les termes $\gm_k$
soient aussi grands que possibles. C'est l'objet de ce paragraphe.

Soit une $y \ge 12$ et une $g$--suite $y$--admissible
de longueur $1$, $(\gm_0 = 0, \gm_1=1/2, \gm_2)$.
Vu \eqref{defIJ}, $\gm_2 = \dfrac{1+\al_1}{2}$
; la plus grande valeur de $\gm_2$ est donc obtenue en donnant à $\al_1$
la plus grande valeur possible. Or, par la remarque \ref{m1rem}, la
suite $(\gm_0 = 0, \gm_1=1/2, \gm_2)$ est $y$--admissible
si et seulement si $\al_1 \le \frac{1}{4} \eta_2\big(\frac{y}{4}\big)$.
La plus grande valeur de $\gm_2$ est donc obtenue en posant
$\al_1 =  \dfrac{1}{4} \eta_2\big(\dfrac{y}{4}\big)$ et
$\gm_2 = \dfrac{1+\al_1}{2}$.

Soit une $g$--suite $y$--admissible de longueur $k$, $(\gm_j)_{0 \le j
  \le k+1}$ que l'on cherche à prolonger.  La relation \dsm{\gm_{k+1}
  = \bt_{k+1}^2} détermine $\bt_{k+1}$.  La relation \dsm{\gm_{k+2} =
  \frac{1+\al_{k+1}}{2}} montre que la plus grande valeur de
$\gm_{k+2}$ est obtenue en choisissant $\al_{k+1}$ le plus grand
possible.  On pose $m=1$ et on essaie
\begin{equation}\label{choixalpha}
\al_{k+1} 
= \bt_{k+1}\eta_{m+1}(y \bt_{k+1})
\end{equation}
\begin{itemize}
\item
Si $\al_{k+1} \le \al_k$ la construction échoue car
il est impossible de satisfaire
$\gm_{k+1} = \dfrac{1+\al_k}{2}< \dfrac{1+\al_{k+1}}{2} = \gm_{k+2}$.
\item
Si $\al_{k+1} > \al_{k}$  considérons 
\dsm{
I_{k+1} = \intfd{\al_{k+1}}{\bt_{k+1}}
}
Si cet intervalle rencontre au plus $m$ des intervalle
$J_0, J_1, \dots, J_k$ on termine en choisissant 
$\gm_{k+2} = \dfrac{1+\al_{k+1}}{2}$.
Si $I_{k+1}$ rencontre  $m' > m$ intervalles parmi 
$J_0, J_1, \dots, J_{k}$,
il faut recommencer le choix de $\al_{k+1}$ au moyen de la formule
\eqref{choixalpha} en incrémentant $m$.
\end{itemize}

Plus formellement cette construction est décrite dans l'algorithme
\ref{algo1}.  Cet algorithme n'est pas certain de terminer, cependant
il permet de calculer la $g$--suite $(\gm_k)$ de longueur $21$ et
$y$--admissible (avec $y = 4703.39$) qui sera utilisée au paragraphe
\ref{parmajo}.

\floatname{algorithm}{Algorithme}
\renewcommand{\algorithmicrepeat}{\textbf{Répéter}}
\renewcommand{\algorithmicuntil}{\textbf{jusqu'à ce que}}
\renewcommand{\algorithmicendif}{}
\renewcommand{\algorithmicif}{\textbf{si}}
\renewcommand{\algorithmicthen}{\textbf{alors}}
\renewcommand{\algorithmicelse}{\textbf{sinon}}
\begin{algorithm}
\caption{: Calcul de $\al_{k+1}, \bt_{k+1}, \gm_{k+2}$ à partir de $\gm_{k+1}$,
$\al_k$ et $y$}
\label{algo1}
\begin{algorithmic}
\STATE $\bt_{k+1} = \gm_{k+1}^2, \quad m=1$\\
\REPEAT
\STATE $\al_{k+1} = \bt_{k+1} \eta_{m+1}(y \bt_{k+1})$\\
\IF{$\al_{k+1} \le \al_{k}$}
\STATE{Renvoyer ECHEC}
\ELSE
\STATE $m=m+1$\\
\ENDIF
\vspace{-2.5ex}
\UNTIL{\\
  $\quad \intfd{\al_{k+1}}{\bt_{k+1}}$ rencontre au plus $m$
  intervalles $\intfd{0}{\gm_1},
  \dots,\intfd{\gm_{k}}{\gm_{k+1}}$}
\STATE $\gm_{k+2} = \dfrac{1+\al_{k+1}}{2}$\\
\end{algorithmic}
\end{algorithm}

\section{Majoration de  $\log P^{+}(g(n))$ pour $n \ge x$.}\label{parmajo}
\suppress{
Soit $n_0 > 0$ fixé.
On se propose de majorer le rapport
\begin{equation}\label{pab}
\frac{\Pplus(g(n))}{\sqrt{n \log n}} 
\end{equation}
pour $n \ge n_0$ à l'aide le proposition \ref{propg}. 
 Faisons l'hypothèse qu'il existe un $n \ge n_0$ tel que
\begin{equation}\label{pab2}
\Pplus(g(n)) \ge a \sqrt{n \log n}, \qquad a > 0.
\end{equation}

Notons pour simplifier $y=a\sqrt{n_0\log n_0}$, et $q = \Pplus(g(n))$.
On choisit arbitrairement une valeur de $k$ et, par l'algorithme 1, on
construit la $g$--suite $y$--admissible optimale de longueur $k$. Puisque
$q \ge y$ cette suite est a fortiori $q$--admissible et la majoration
\eqref{majqg} de la proposition \ref{propg} donne
}

\begin{theorem}\label{t1}
Pour tout $n \ge 4$ on a
\begin{equation}\label{th1.1}
\frac{\Pplus(g(n))}{\sqrt{n \log n}} 
\le \frac{\Pplus(g(215))}{\sqrt{215 \log(215)}} 
= 1.26542463\dots
\end{equation}
le maximum étant seulement atteint pour $n=215$ avec
$g(215) = 2^3\times 3^2\times 5\times 7\times 11\times 13\times
17\times 19\times 23\times 29\times 31\times 43$
et $\Pplus(g(215)) = 43$.
\end{theorem}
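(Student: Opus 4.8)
The plan is to write $c = \Pplus(g(215))/\sqrt{215\log 215} = 1.26542463\dots$ for the target constant and to split the range $n \ge 4$ at the threshold $n_0 = 10^6$: the range $4 \le n \le 10^6$ will be handled by direct computation, while the range $n \ge 10^6$ will be handled by Proposition~\ref{prop3}. The two ranges overlap harmlessly at $n = 10^6$.

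For $n \ge n_0 = 10^6$ I would apply Proposition~\ref{prop3} with $a = c$, $k = 21$, and the length-$21$ $g$-suite that is $y$-admissible for $y = 4703.39$ produced by Algorithme~\ref{algo1} in \S\ref{gsuite2}. One first checks the hypothesis $12 \le y \le a\sqrt{n_0\log n_0}$: since $\sqrt{10^6\log 10^6} = 3716.92\dots$, one has $a\sqrt{n_0\log n_0} = 4703.4\dots \ge 4703.39 = y$ (the near-equality here is exactly what fixes the particular value $y = 4703.39$). Next one evaluates
\[
D_{21} = \gm_{22}\thmin(y\gm_{22}) - 21\frac{\log y}{y} - \frac{\sum_{j=1}^{22}\log\gm_j}{y}
\]
from \eqref{den}, using the tabulated values of $\thmin$ from \S\ref{sectionthmin} together with the explicit terms $\gm_j$ of the optimal $g$-suite, and checks that $D_{21} > 1.05314/c = 0.8322\dots$, equivalently $b = 1.05314/D_{21} < c$. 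Proposition~\ref{prop3} then yields $\Pplus(g(n)) \le \max(a,b)\sqrt{n\log n} = c\sqrt{n\log n}$. This inequality is moreover strict for $n \ge 10^6$: in the proof of Proposition~\ref{prop3} the alternative $q < y$ gives $q < y \le a\sqrt{n\log n}$, while the alternative $q \ge y$ gives $q \le b\sqrt{n\log n} < a\sqrt{n\log n}$ because $b < a = c$. Hence $\Pplus(g(n)) < c\sqrt{n\log n}$ for every $n \ge 10^6$.

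For $4 \le n \le 10^6$ I would compute $\Pplus(g(n))$ for each $n$ (from the table of $g(n)$, or via the algorithm of \cite{DNZ}), form the ratio $\Pplus(g(n))/\sqrt{n\log n}$, and check that it never exceeds $c$ and attains $c$ only at $n = 215$, where $\Pplus(g(215)) = 43$ and $g(215) = 2^3\times 3^2\times 5\times 7\times 11\times 13\times 17\times 19\times 23\times 29\times 31\times 43$. Together with the previous paragraph this establishes the bound for all $n \ge 4$ and shows the maximum is attained only at $n = 215$.

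The delicate step is the large-$n$ estimate. One must genuinely carry out Algorithme~\ref{algo1} to obtain the optimal length-$21$ $g$-suite, which presupposes the tabulations of the functions $\eta_{m}$ and of $\thmin$, and then confirm numerically that $D_{21}$ clears the critical threshold $0.8322\dots$. Because $b = 1.05314/D_{21}$ lies just below $c$, the margin is thin, and the parameters $k = 21$, $y = 4703.39$, $n_0 = 10^6$ must be chosen together so that $y \le c\sqrt{n_0\log n_0}$ holds while $D_{21}$ stays large enough; taking $k$ as large as the construction allows raises $\gm_{k+1}$ toward $1$, which is the mechanism that pushes $D_k$ large enough to force $b < c$. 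The verification over $4 \le n \le 10^6$ is conceptually routine but rests on the correctness and practicality of the underlying $g(n)$ computation.
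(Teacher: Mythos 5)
Your overall architecture is exactly the paper's: split at $n_0=10^6$, verify $4\le n\le 10^6$ by direct computation of $g(n)$, and handle $n\ge 10^6$ via Proposition \ref{prop3} applied to the optimal $y$--admissible $g$--suite produced by Algorithme \ref{algo1} with $y=4703.39$. But there is a genuine numerical failure in your large-$n$ step: you take $k=21$ and assert that $D_{21}$ clears the threshold $1.05314/c = 0.83224\dots$, whereas by the paper's own table $D_{21}=0.831605\dots < 0.83224$, so your $b = 1.05314/D_{21} = 1.2664\dots > c$ and Proposition \ref{prop3} then only gives $\Pplus(g(n)) \le b\sqrt{n\log n}$ with $b$ \emph{above} the target constant. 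The verification you propose to "check" would simply come out false. The paper instead uses $k=20$: the table gives $D_{20}=0.832710\dots$, whence $b = 1.264713\dots < 1.2654 \le c$, and the bound (and your strictness argument via $b<a$, which is fine structurally) goes through; note that a length-$20$ prefix of the length-$21$ admissible suite is itself $y$--admissible, so nothing else changes.

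The underlying conceptual error is your claimed mechanism for choosing $k$: "taking $k$ as large as the construction allows raises $\gm_{k+1}$ toward $1$, which pushes $D_k$ large enough." Increasing $k$ does raise $\gm_{k+1}$, but it also inflates the penalty terms $k(\log y)/y$ and $-\bigl(\sum_{j=1}^{k+1}\log\gm_j\bigr)/y$ in \eqref{den}, and the trade-off peaks before the construction fails: the paper's remark after Théorème \ref{t1} records $D_{20} > D_{21} > \cdots > D_{30} = 0.822869\dots$ (the algorithm runs to $k=30$ before returning ECHEC). So $k$ must be chosen at the maximum of $k\mapsto D_k$, not as large as possible. With that one correction ($k=20$, $D_{20}$), your proof coincides with the paper's.
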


\begin{proof}
On applique la proposition \ref{prop3}
avec $n_0=10^6$, $a = 1.2654$ et 
$y = 4703.39$.
On construit à l'aide de l'algorithme \ref{algo1}
les 21 premiers termes de la $g$--suite $y$--admissible
optimale. 
On obtient les intervalles suivants~: 
\medskip

\begin{center}
\begin{small}
\begin{tabular}{|l|l|l|l|l|l|}
\hline
$k$ & $\al_k$ & $\bt_k$ & $\gm_{k+1}$ & $\set{j}$ & $ D_k$\\
\hline
$1$ & $0.2426\ldots$ & $0.2500\ldots$ & $0.621326\dots$ & $0$   & 0.599249\dots\\ 
$2$ & $0.3786\ldots$ & $0.3860\ldots$ & $0.689343\dots$ & $0$   & 0.663300\ldots\\
$3$ & $0.4669\ldots$ & $0.4751\ldots$ & $0.733450\dots$ &$0$    & 0.706255\ldots\\
$4$ & $0.5308\ldots$ & $0.5379\ldots$ & $0.765402\dots$ &$1$    & 0.739341\ldots\\
$5$ & $0.5780\ldots$ & $0.5858\ldots$ & $0.789031\dots$ &$1$    & 0.760626\ldots\\
$6$ & $0.6135\ldots$ & $0.6225\ldots$ & $0.806763\dots$ &$1, 2$ & 0.776159\ldots\\
$7$ & $0.6422\ldots$ & $0.6508\ldots$ & $0.821112\ldots$ &$2$   & 0.788389\ldots\\
$8$ & $0.6660\ldots$ & $0.6742\ldots$ & $0.833025\dots$ &$2$    & 0.798242\ldots\\
$9$ & $0.6845\ldots$ & $0.6939\ldots$ & $0.842282\dots$ &$2, 3$ & 0.805505\ldots\\
$10$& $0.7019\ldots$ & $0.7094\ldots$ & $0.850985\dots$ &$3$    & 0.812224\ldots\\
$11$& $0.7165\ldots$ & $0.7241\ldots$ & $0.858275\dots$ &$3$    & 0.817565\ldots\\ 
$12$& $0.7266\ldots$ & $0.7366\ldots$ & $0.863347\dots$ &$3, 4$ & 0.820742\ldots\\
$13$& $0.7375\ldots$ & $0.7453\ldots$ & $0.868760\dots$ &$4$    & 0.824250\ldots\\
$14$& $0.7467\ldots$ & $0.7547\ldots$ & $0.873399\dots$ &$4$    & 0.827003\ldots\\
$15$& $0.7547\ldots$ & $0.7628\ldots$ & $0.877397\dots$ &$4$    & 0.829130\ldots\\
$16$& $0.7594\ldots$ & $0.7698\ldots$ & $0.879717\dots$ &$4, 5$ & 0.829621\ldots\\
$17$& $0.7657\ldots$ & $0.7739\ldots$ & $0.882877\dots$ &$5$    & 0.830930\ldots\\
$18$& $0.7712\ldots$ & $0.7794\ldots$ & $0.885632\dots$ &$5$    & 0.831844\ldots\\
$19$& $0.7760\ldots$ & $0.7843\ldots$ & $0.888043\dots$ &$5$    & 0.832421\ldots\\
$20$& $0.7803\ldots$ & $0.7886\ldots$ & $0.890159\dots$ &$5$    & 0.832710\ldots\\
$21$& $0.7816\ldots$ & $0.7923\ldots$ & $0.890844\dots$ &$5, 6$ & 0.831605\ldots\\
\hline
\end{tabular}
\end{small}
\end{center}
La colonne $\set{j}$ contient les valeurs de $j$ telles
que $I_k$ rencontre $J_j$.
Ainsi la valeur de $m_k$ est le nombre des valeurs figurant dans
la $k\iem$ ligne de la colonne $\set{j}$.
\medskip

Cela donne $D_{20} =  0.832710\dots$ et, avec \eqref{l54}, 
$b = 1.264713\dots < a$ ce qui prouve que
$\Pplus(g(n)) < 1.2654 \sqrt{n \log n}$ pour $n \ge 10^6$.
Le calcul de toutes les valeurs de $g(n)$ pour $4 \le n \le 1\,000\,000$
montre que le maximum est atteint une seule fois, en $n  = 215$.
\end{proof}

\begin{rem}
Pour $y = 4703.39$, l'algorithme 1 calcule
$\al_k$, $\bt_k$ et $\gm_{k+1}$ pour $k \le 30$, mais trouve
$\al_{31} < \al_{30}$ et retourne donc \og ECHEC \fg.
Les valeurs de $D_k$ calculées par la formule \eqref{den} vérifient
$D_{20} > D_{21} > \cdots > D_{30} = 0.822869\dots$

Les valeurs de $\al_k$, $\bt_k$, $\gm_{k+1}$ déterminées
par l'algorithme 1 ne dépendent que de façon discrète
de $y$. Par exemple, on obtient la même $g$--suite
de longueur $21$ pour tout $y$ vérifiant
$4692 \le y \le 4859$. Cependant $D_k$ dépend de $y$.
Notons aussi que $\al_k$, $\bt_k$ et $\gm_k$
sont rationnels, mais avec des numérateurs et dénominateurs
croissant très vite avec $k$.

Dans la preuve du théorème de \cite{GRA}, les suites
$\al_1,\dots,\al_9$,
$\bt_1,\dots,\bt_9$ utilisées par J. Grantham sont très
voisines de celles obtenues par l'algorithme 1 pour
$y = 3329$.

Soit $y = 114\,620$. En calculant avec l'algorithme 1 la $g$--suite
optimale $y$--admissible de de longueur $97$, on trouve $\gm_{98} =
0.9693673\dots, D_{97}=0.9549879\dots$. Avec $n_0 = 540\,000\,000$ et
$a=1.1$, la formule \eqref{l54bis} de la proposition \ref{prop3} donne
alors $b' = 1.0998903\dots$ et
\begin{equation}\label{Pplus1.1}
\Pplus(g(n)) \le 1.1\sqrt{n\log n},
\qquad n \ge 540\,000\,000.
\end{equation}

\end{rem}

\section{La $g$--suite uniforme}\label{gsuiteuniforme}

L'étude théorique des $g$--suites optimales ne semble pas
facile. Dans ce paragraphe nous introduisons les $g$--suites
uniformes, moins efficaces pour les calculs numériques, mais
plus simples à étudier.

\begin{definition}\label{suitecanonique}
Soit $0 < \eta < 1$. On pose $\gm_0 = 0$ et, pour $j \ge 1$,
on définit $\gm_j = \gm_j(\eta)$ par
\begin{equation}
\gm_j = \frac{1+\eta\gm_{j-1}^2}{2}\cdot
\end{equation}
\end{definition}

\begin{rem}\label{gmcroit}
Remarquons que $\gm_j(\eta)$ est une fonction croissante de $j$ et
de $\eta$.
\end{rem}

\begin{lem}\label{lemeps}
  La suite $\gm_j(\eta)$ définie ci-dessus est une
  $g$--suite infinie.  On l'appelle la \emph{$g$--suite uniforme de
    paramètre} $\eta$.
  Notons $\veps = \veps(\eta) = 1-\eta$, 
et $L_{\veps} = \lim_{j \to +\infty} \gm_j$.
  Alors
\begin{equation}\label{gmlim}
L_{\veps} = \frac{1}{1+\sqrt\veps}
\qtx{ et, pour tout $j \ge 0$, }
L_{\veps} - \gm_j \le L_{\veps}(1-\sqrt\veps)^j.
\end{equation}
\end{lem}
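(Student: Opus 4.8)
Il s'agit d'étudier la suite récurrente $\gm_j = \frac{1+\eta\gm_{j-1}^2}{2}$ avec $\gm_0 = 0$, $0 < \eta < 1$. Je dois établir trois choses : (i) qu'il s'agit d'une $g$--suite infinie au sens de la définition \ref{defgsuite}, (ii) la valeur de la limite $L_\veps = \frac{1}{1+\sqrt\veps}$ avec $\veps = 1-\eta$, et (iii) la majoration géométrique $L_\veps - \gm_j \le L_\veps(1-\sqrt\veps)^j$. Le cœur de la preuve est l'analyse du point fixe de l'application $\Phi(t) = \frac{1+\eta t^2}{2}$.

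**Le plan.** D'abord je vérifie que la suite est bien une $g$--suite. Par la remarque \ref{gmcroit}, $(\gm_j)$ est croissante, et comme $\gm_0 = 0$, $\gm_1 = 1/2$, les valeurs initiales coïncident avec celles requises en \eqref{condgamma}. Il reste à vérifier $0 < \gm_j < 1$ et la condition $\gm_j < \gm_{j+1} < \frac{1+\gm_j^2}{2}$. La croissance stricte donne $\gm_j < \gm_{j+1}$, et comme $\eta < 1$ on a $\gm_{j+1} = \frac{1+\eta\gm_j^2}{2} < \frac{1+\gm_j^2}{2}$, ce qui établit l'inégalité de droite. La majoration $\gm_j < 1$ résulte par récurrence : si $\gm_{j-1} < 1$ alors $\gm_j = \frac{1+\eta\gm_{j-1}^2}{2} < \frac{1+\eta}{2} < 1$.

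**Le point fixe.** Ensuite je calcule la limite. Comme $(\gm_j)$ est croissante et majorée par $1$, elle converge vers une limite $L$ qui doit satisfaire $L = \frac{1+\eta L^2}{2}$, soit $\eta L^2 - 2L + 1 = 0$. Les racines sont $L = \frac{1\pm\sqrt{1-\eta}}{\eta} = \frac{1\pm\sqrt\veps}{\eta}$. En écrivant $\eta = 1-\veps = (1-\sqrt\veps)(1+\sqrt\veps)$, la racine avec le signe $-$ devient $L = \frac{1-\sqrt\veps}{(1-\sqrt\veps)(1+\sqrt\veps)} = \frac{1}{1+\sqrt\veps} = L_\veps$, qui est $\le 1$, tandis que l'autre racine $\frac{1}{1-\sqrt\veps} \ge 1$ est exclue puisque tous les $\gm_j < 1$. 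D'où $L = L_\veps$.

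**La majoration géométrique — le point principal.** Pour établir $L_\veps - \gm_j \le L_\veps(1-\sqrt\veps)^j$, je procède par récurrence à partir de l'identité $L_\veps - \gm_{j+1} = \frac{\eta}{2}(L_\veps^2 - \gm_j^2) = \frac{\eta}{2}(L_\veps+\gm_j)(L_\veps - \gm_j)$. Le facteur contractant est $\frac{\eta}{2}(L_\veps+\gm_j)$, et comme $\gm_j \le L_\veps$ par croissance et convergence par en dessous, ce facteur est majoré par $\frac{\eta}{2}\cdot 2L_\veps = \eta L_\veps$. Or $\eta L_\veps = \frac{(1-\sqrt\veps)(1+\sqrt\veps)}{1+\sqrt\veps} = 1-\sqrt\veps$, exactement le taux annoncé. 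Ainsi $L_\veps - \gm_{j+1} \le (1-\sqrt\veps)(L_\veps - \gm_j)$, et en initialisant avec $L_\veps - \gm_0 = L_\veps$, la récurrence donne $L_\veps - \gm_j \le L_\veps(1-\sqrt\veps)^j$. La principale subtilité est de s'assurer que $\gm_j \le L_\veps$ (et non seulement $\gm_j < 1$) pour que la borne $\frac{\eta}{2}(L_\veps+\gm_j) \le 1-\sqrt\veps$ soit valable ; cela découle de ce que la suite est croissante et converge vers $L_\veps$ par valeurs inférieures, ce qu'on peut aussi voir directement par récurrence puisque $\Phi$ est croissante sur $[0,1]$ et $\gm_0 = 0 \le L_\veps$ entraîne $\gm_j = \Phi^{(j)}(0) \le \Phi^{(j)}(L_\veps) = L_\veps$.
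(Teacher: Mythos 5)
Your proof is correct and takes essentially the same route as the paper: both analyze the iteration $\gm_{j+1}=f(\gm_j)$ with $f(t)=(1+\eta t^2)/2$, verify the $g$--suite conditions from $\eta<1$ and the monotonicity of $f$, and identify $L_{\veps}=1/(1+\sqrt\veps)$ as the relevant fixed point with contraction rate $\eta L_{\veps}=1-\sqrt\veps$. Your explicit factorization $L_{\veps}-\gm_{j+1}=\frac{\eta}{2}\left(L_{\veps}+\gm_j\right)\left(L_{\veps}-\gm_j\right)\le \eta L_{\veps}\left(L_{\veps}-\gm_j\right)$ is algebraically identical to the paper's estimate $f(L_{\veps})-f(\gm_j)<f'(L_{\veps})\left(L_{\veps}-\gm_j\right)$ obtained from the monotonicity of $f'$, so the two arguments coincide in substance.
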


\begin{proof}
La démonstration se fait par récurrence.
On a $\gm_0 = 0$, $\gm_1 = \dfrac{1+\eta\gm_0^2}{2} = \dfrac12$,
puis 
\[
\gm_{j+1} = \frac{1+\eta\gm_j^2}{2} < \frac{1+\gm_j^2}{2}\cdot
\]
En outre $\gm_{j+1} = f(\gm_j)$ avec 
\dsm{f = t \mapsto [1+(1-\veps)t^2]/2}. La fonction $f$ est croissante
pour $t \ge 0$
et admet deux points fixes qui sont \dsm{\frac{1}{1+\sqrt\veps}} 
et \dsm{\frac{1}{1-\sqrt\veps}}.
Puisque \dsm{\gm_0 < \gm_1}
la suite
$(\gm_j)$ est strictement croissante de limite
$L_{\veps}=\dfrac{1}{1+\sqrt\veps}$.
Les conditions figurant dans la définition \ref{defgsuite} sont
donc satisfaites et $(\gm_j)$ est une $g$--suite. 
De plus, puisque $f'$ est croissante
\begin{eqnarray*}
L_{\veps} - \gm_{j} &=& f(L_{\veps}) - f(\gm_{j-1}) 
< f'(L_{\veps})  (L_{\veps} - \gm_{j-1})
 = (1-\sqrt\veps)(L_{\veps} -\gm_{j-1})\\
&\le& (1-\sqrt\veps)^j(L_{\veps} -\gm_{0}) = L_{\veps}(1-\sqrt\veps)^j.
\end{eqnarray*}
\end{proof}

Dans tout ce paragraphe  $\eta$ est un réel positif
satisfaisant $0 < \eta < 1$, $\veps = 1-\eta$,
$(\gm_j)$ est la $g$--suite uniforme de
paramètre $\eta$, et $I_j = \intfd{\al_j}{\bt_j}$,
$J_j = \intfd{\gm_j}{\gm_{j+1}}$ sont les intervalles
associés à cette $g$--suite (cf. défintion \ref{defgsuite}).

\begin{lem}\label{lemgn1n}
Soit $u_j = L_{\veps}-\gm_j$. Alors $(\gm_{j+1}-\gm_j)$ est une
suite décroissante, et, pour tout $n$ on a
\[
\gm_{j+1} - \gm_j = \sqrt\veps u_j + \frac{u_j^2}{2}(1-\veps).
\]
En particulier 
\dsm{\sqrt\veps (L_{\veps}-\gm_j) < \gm_{j+1} - \gm_j}.
\end{lem}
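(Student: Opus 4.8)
The plan is to work directly from the definitions and the recurrence for the uniform $g$-sequence. Recall that $\gm_j = \frac{1+\eta\gm_{j-1}^2}{2}$ with $\eta = 1-\veps$, that $L_\veps = \frac{1}{1+\sqrt\veps}$ is the limit, and that $u_j = L_\veps - \gm_j$. The key identity to establish is the closed form for the consecutive difference $\gm_{j+1}-\gm_j$ in terms of $u_j$, from which both the decreasing property and the final inequality will follow easily.

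First I would compute $\gm_{j+1}-\gm_j$ directly. Writing $\gm_j = L_\veps - u_j$ and using $\gm_{j+1} = f(\gm_j)$ with $f(t) = \frac{1+(1-\veps)t^2}{2}$, I would exploit that $L_\veps$ is a fixed point, $f(L_\veps) = L_\veps$, to get
\[
\gm_{j+1} - \gm_j = \big(f(\gm_j) - f(L_\veps)\big) + \big(L_\veps - \gm_j\big)
= \frac{1-\veps}{2}\big(\gm_j^2 - L_\veps^2\big) + u_j.
\]
Factoring $\gm_j^2 - L_\veps^2 = (\gm_j - L_\veps)(\gm_j + L_\veps) = -u_j(2L_\veps - u_j)$ and substituting $L_\veps(1+\sqrt\veps) = 1$, i.e. $(1-\veps)L_\veps = \sqrt\veps\,(1-\sqrt\veps)L_\veps = \sqrt\veps(1 - \sqrt\veps/(1+\sqrt\veps))$ — more cleanly, $(1-\veps)L_\veps^2 = L_\veps - \frac12$ from the fixed-point equation — I would simplify the expression until it collapses to $\sqrt\veps\, u_j + \frac{u_j^2}{2}(1-\veps)$. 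The arithmetic is routine; the only care needed is to use the fixed-point relation $\frac{1+(1-\veps)L_\veps^2}{2} = L_\veps$ repeatedly to eliminate $L_\veps$ in favor of $\sqrt\veps$. This establishes the stated formula for $\gm_{j+1}-\gm_j$.

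The final claim $\sqrt\veps\,(L_\veps - \gm_j) < \gm_{j+1}-\gm_j$ is then immediate, since $\gm_{j+1}-\gm_j = \sqrt\veps\, u_j + \frac{u_j^2}{2}(1-\veps)$ and the second term $\frac{u_j^2}{2}(1-\veps)$ is strictly positive (as $0 < \veps < 1$ and $u_j = L_\veps - \gm_j > 0$ by the strict monotonicity proved in Lemme~\ref{lemeps}). For the decreasing property of $(\gm_{j+1}-\gm_j)$, I would argue that $u_j$ is a decreasing positive sequence — indeed $u_{j+1} = L_\veps - \gm_{j+1} < L_\veps - \gm_j = u_j$ since $(\gm_j)$ is strictly increasing — and the map $u \mapsto \sqrt\veps\, u + \frac{1-\veps}{2}u^2$ is increasing in $u \ge 0$, so $\gm_{j+1}-\gm_j$ is increasing in $u_j$ and hence decreasing in $j$.

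I anticipate no serious obstacle here; the lemma is essentially an exercise in algebraic manipulation of the quadratic recurrence. The main point requiring attention is organizing the substitution of the fixed-point identity so that the difference $\gm_{j+1}-\gm_j$ genuinely simplifies to the clean form with coefficients $\sqrt\veps$ and $\frac{1-\veps}{2}$, rather than leaving residual terms in $L_\veps$; once the closed form is in hand, both assertions of the lemma drop out in a single line each.
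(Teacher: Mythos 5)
Your proposal is correct and takes essentially the same route as the paper: a direct algebraic expansion of $\gm_{j+1}-\gm_j$ around the fixed point $L_\veps = 1/(1+\sqrt\veps)$ (the paper substitutes $\gm_j = L_\veps - u_j$ into $2\gm_{j+1} = 1+(1-\veps)\gm_j^2$; your decomposition through $f(\gm_j)-f(L_\veps)$ is the same computation, though note two harmless slips in your side remarks --- the needed cleanup identity is $(1-\veps)L_\veps = 1-\sqrt\veps$, and your parenthetical ``$(1-\veps)L_\veps^2 = L_\veps - \tfrac12$'' is off by a factor of $2$, since the fixed-point equation gives $(1-\veps)L_\veps^2 = 2L_\veps - 1$). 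One point in your favor: you explicitly justify the claim that $(\gm_{j+1}-\gm_j)$ is decreasing, via the monotonicity of $u \mapsto \sqrt\veps\,u + \tfrac{1-\veps}{2}u^2$ on $u\ge 0$ together with the decrease of $u_j$, whereas the paper's proof establishes only the identity and the final inequality and leaves the decreasing assertion implicit.
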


\begin{proof}
En effet, puisque $\gm_j = L_{\veps}-u_j =
\dfrac{1}{1+\sqrt\veps}-u_j$,
on a
\begin{eqnarray*}
2(\gm_{j+1} - \gm_j) &=& 2f(\gm_j) - 2\gm_j
= 1 + (1-\veps)\gm_j^2 - 2\gm_j\\
&=&
 1+ (1-\veps)\left(\frac{1}{1+\sqrt\veps}-u_j\right)^2 
-\frac{2}{1+\sqrt\veps}+2u_j\\
&=&
 2\sqrt\veps u_j + (1-\veps)u_j^2
> 2\sqrt{\veps}u_j = 2\sqrt{\veps}(L_{\veps}-\gm_j).
\end{eqnarray*}
\end{proof}

\begin{lem}\label{lem2g}
Il n'existe pas de couple $(k,j)$ tel que
\dsm{\intfd{\gm_j}{\gm_{j+1}} \subset \intfd{\al_k}{\bt_{k}}}.
Et donc chaque intervalle $I_k$ rencontre au plus deux
intervalles $J_j$.
\end{lem}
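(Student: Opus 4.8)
Le plan est d'exploiter l'uniformité de la suite. Pour $k\ge 1$, la relation \eqref{defIJ} donne $\bt_k=\gm_k^2$, et la récurrence de la définition \ref{suitecanonique} fournit $\al_k=2\gm_{k+1}-1=\eta\gm_k^2=\eta\bt_k$ ; ainsi $I_k=\intfd{\eta\gm_k^2}{\gm_k^2}$ et le rapport $\al_k/\bt_k=\eta$ est constant. Je commencerais par traduire l'inclusion qu'il faut exclure. Comme $J_j$ et $I_k$ sont tous deux de la forme $\intfd{\cdot}{\cdot}$, l'inclusion $\intfd{\gm_j}{\gm_{j+1}}\subset\intfd{\al_k}{\bt_k}$ équivaut à $\al_k\le\gm_j$ et $\gm_{j+1}\le\bt_k$, soit $\eta\gm_k^2\le\gm_j$ et $\gm_{j+1}\le\gm_k^2$. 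Le cas $k=0$ est à traiter à part : $I_0=\intfd{0}{1/4}$ ne rencontre que $J_0=\intfd{0}{1/2}$, qu'il ne contient pas.

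Pour $k\ge1$ je raisonnerais par l'absurde en supposant qu'un tel couple $(k,j)$ existe. En retranchant les deux inégalités précédentes j'obtiens la majoration de l'écart
\[
\gm_{j+1}-\gm_j\le\gm_k^2-\eta\gm_k^2=\veps\,\gm_k^2,
\]
que j'opposerais à la minoration $\gm_{j+1}-\gm_j>\sqrt\veps\,(L_{\veps}-\gm_j)$ du lemme \ref{lemgn1n}. Il vient $\sqrt\veps\,(L_{\veps}-\gm_j)<\veps\,\gm_k^2$, d'où, après division par $\sqrt\veps>0$, l'inégalité $\gm_j>L_{\veps}-\sqrt\veps\,\gm_k^2$.

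Il ne resterait qu'à reporter cette borne dans la chaîne $\gm_j<\gm_{j+1}\le\gm_k^2$, ce qui donne $L_{\veps}-\sqrt\veps\,\gm_k^2<\gm_k^2$, soit $L_{\veps}<(1+\sqrt\veps)\gm_k^2$. Or $L_{\veps}=1/(1+\sqrt\veps)$ d'après \eqref{gmlim}, de sorte que cette inégalité équivaut à $(1+\sqrt\veps)^2\gm_k^2>1$, c'est-à-dire $\gm_k>1/(1+\sqrt\veps)=L_{\veps}$. C'est absurde, puisque la suite $(\gm_k)$ croît strictement vers $L_{\veps}$ (lemme \ref{lemeps}) et vérifie donc $\gm_k<L_{\veps}$ pour tout $k$. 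Ceci établit la première assertion.

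Pour la seconde, je remarquerais que les $J_j$ sont des intervalles deux à deux disjoints et consécutifs, si bien que les indices $j$ pour lesquels $I_k\cap J_j\ne\emptyset$ sont consécutifs. Si $I_k$ en rencontrait trois au moins, disons $J_a,\dots,J_b$ avec $b\ge a+2$, alors le fait que $I_k$ rencontre $J_a$ donnerait $\al_k<\gm_{a+1}$ et le fait qu'il rencontre $J_b$ donnerait $\gm_{a+2}\le\gm_b<\bt_k$ ; on aurait donc $\al_k\le\gm_{a+1}$ et $\gm_{a+2}\le\bt_k$, c'est-à-dire $J_{a+1}\subset I_k$, en contradiction avec la première assertion. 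Donc $I_k$ rencontre au plus deux intervalles $J_j$. L'étape délicate est le cœur du raisonnement : opposer la majoration élémentaire de l'écart $\gm_{j+1}-\gm_j$, issue de l'uniformité $\al_k=\eta\bt_k$, à la minoration du lemme \ref{lemgn1n}, le tout se refermant exactement grâce à la valeur $L_{\veps}=1/(1+\sqrt\veps)$.
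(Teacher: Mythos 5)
Votre preuve est correcte et suit essentiellement la même démarche que celle du papier : les deux arguments opposent la majoration $\gm_{j+1}-\gm_j\le\bt_k-\al_k=\veps\gm_k^2$, issue de l'inclusion et de l'identité $\al_k=\eta\bt_k$, à la minoration $\gm_{j+1}-\gm_j>\sqrt\veps\,(L_{\veps}-\gm_j)$ du lemme \ref{lemgn1n}, la contradiction se refermant via $L_{\veps}=1/(1+\sqrt\veps)$ et $\gm_k<L_{\veps}$ (vous aboutissez à $\gm_k>L_{\veps}$ là où le papier obtient deux bornes incompatibles sur $L_{\veps}-\gm_j$, simple variante algébrique). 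Vos deux compléments — le traitement explicite du cas $k=0$ et l'argument de l'intervalle médian $J_{a+1}\subset I_k$ pour la seconde assertion, que le papier laisse implicite — sont corrects et bienvenus.
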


\begin{proof} Supposons
\dsm{\intfd{\gm_j}{\gm_{j+1}} \subset \intfd{\al_k}{\bt_k} }.
Alors
\[
\al_k \le \gm_j < \gm_{j+1} \le \bt_k = \gm_k^2
\]
et donc
\begin{equation}\label{lgmn}
 L_{\veps}-\gm_j > L_{\veps}-\gm_k^2 > L_{\veps}-L_{\veps}^2 
= L_{\veps}(1-L_{\veps}) =  \sqrt\veps L_{\veps}^2.
\end{equation}
En utilisant le lemme \ref{lemgn1n}, on a aussi
\[
\sqrt\veps(L_{\veps}-\gm_j) < \gm_{j+1}-\gm_j  \le \bt_k-\al_k = \veps\bt_k
= \veps\gm_k^2 < \veps L_{\veps}^2
\]
qui implique \dsm{L_{\veps}-\gm_j < \sqrt\veps  L_{\veps}^2}, 
ce qui contredit \eqref{lgmn}.
\end{proof}

\begin{lem}\label{lem12}
Soit $q$ un facteur premier de $g(n)$ et
$\eta \le \eta_3(q/4)$ ;
la $g$--suite uniforme de paramètre $\eta$ et
$\big(\intfg{\al_k}{\bt_k}, \intfg{\gm_k}{\gm_{k+1}}\big)$
est $q$--admissible.
Pour tout $k \ge 1$,
\begin{equation}\label{eq11}
q  \le 
\dfrac{\log g(n)}{\dfrac{\theta(q\gm_{k+1})}{q} -k\dfrac{\log q}{q}} 
\cdot
\end{equation}
\end{lem}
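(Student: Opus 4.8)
The statement contains two independent assertions — the $q$-admissibility of the uniform $g$-suite, and the inequality \eqref{eq11} — and the plan is to establish admissibility first, after which \eqref{eq11} drops out of Proposition \ref{propg}. The whole argument is essentially a substitution into formulas already available; the only genuinely delicate point is a matching of hypotheses, addressed at the end.

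For admissibility I would start from the explicit value of $\al_k$. Definition \ref{suitecanonique} gives $2\gm_{k+1} = 1+\eta\gm_k^2$, hence $\al_k = 2\gm_{k+1}-1 = \eta\gm_k^2 = \eta\bt_k$. By Definition \ref{def5}, applied to each finite truncation, $q$-admissibility is the family of inequalities $\al_k \le \eta_{m_k+1}(\bt_k q)\,\bt_k$ for $k\ge 1$; after dividing by $\bt_k>0$ this reads $\eta \le \eta_{m_k+1}(\bt_k q)$. To control the index I would invoke Lemma \ref{lem2g}: each $I_k$ meets at most two of the $J_j$, so $m_k\le 2$ and $m_k+1\le 3$; since $\eta_j$ is non-increasing in $j$, this gives $\eta_{m_k+1}(\bt_k q)\ge\eta_3(\bt_k q)$. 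Because the $\gm_k$ increase from $\gm_1=1/2$, we have $\bt_k=\gm_k^2\ge 1/4$, so $\bt_k q\ge q/4$ and, $\eta_3$ being increasing, $\eta_3(\bt_k q)\ge\eta_3(q/4)$. The hypothesis $\eta\le\eta_3(q/4)$ then closes the chain $\eta\le\eta_3(q/4)\le\eta_3(\bt_k q)\le\eta_{m_k+1}(\bt_k q)$. (That the hypothesis even makes sense already forces $q/4\ge p_3=5$, so every $\eta_j(\bt_k q)$ above is defined and $q\ge 20\ge 12$, as Definition \ref{def5} demands.)

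For \eqref{eq11} I would fix $k\ge1$ and apply Proposition \ref{propg} to the length-$k$ truncation $(\gm_0,\dots,\gm_{k+1})$, which the previous paragraph shows to be $q$-admissible. Truncation does not alter the relevant $m_k$: since $\bt_k=\gm_k^2<\gm_k$, the interval $I_k$ lies in $\bigcup_{j<k}J_j$ and therefore meets only indices present in the truncation. Proposition \ref{propg} then yields \eqref{majqg} for this $k$, and its second, weaker denominator — obtained from $\sum_{j=1}^{k+1}\log\gm_j<0$ — is precisely \eqref{eq11}.

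The hard part, or rather the one point needing explicit justification, is that Proposition \ref{propg} is stated for $q=\Pplus(g(n))$ while here $q$ is merely a prime factor of $g(n)$. Inspecting its proof, however, the maximality of $q$ is never used: the argument rests only on Lemmas \ref{lemnicolas2} and \ref{lemgrantham}, both valid for an arbitrary prime divisor, and on the inequality $q>q\gm_{k+1}$ that lets $q$ appear as an extra factor in the product bounding $g(n)$ from below. Hence Proposition \ref{propg} applies verbatim to any prime factor $q$, and the greater generality of Lemma \ref{lem12} causes no difficulty; everything else is direct substitution into the formulas already established.
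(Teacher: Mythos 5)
Your proof is correct and follows essentially the same route as the paper's: Lemma \ref{lem2g} to get $m_k\le 2$, the chain $\al_k=\eta\bt_k\le\eta_3(q/4)\,\bt_k\le\eta_3(q\bt_k)\,\bt_k$ (using $\bt_k\ge\bt_1=1/4$ and the growth of $\eta_3$) to get $q$-admissibility via Lemma \ref{lemetak} and Definition \ref{def5}, then Proposition \ref{propg} for \eqref{eq11}. The two points you make explicit — passing to finite truncations of the infinite uniform suite, and checking that the proof of Proposition \ref{propg} nowhere uses the maximality of $q$, only Lemmas \ref{lemnicolas2} and \ref{lemgrantham} — are silently glossed over in the paper's one-line appeal to that proposition, and your verification of both is accurate.
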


\begin{proof}
  Par le lemme \ref{lem2g} chaque intervalle $I_k$ rencontre au plus 2
  des intervalles $J_j$. Le nombre $m_k$ introduit dans la définition
  \ref{def5} vérifie donc $m_k \le 2$ pour tout $k$. L'inégalité
  $\bt_1 = \dfrac{1}{4} \le \bt_k$ et la croissance de $\eta_3$
  donnent $\al_k \le \bt_k \eta_3\big(\dfrac{q}{4}\big) \le \bt_k
  \eta_3(q \bt_k)$.  On a donc $q\al_k \le q\bt_k\eta_3(q\bt_k)$, et
  par le lemme \ref{lemetak} l'intervalle $qI_k$ contient au moins 3
  nombres premiers.  Ainsi, par la définion \ref{def5},
  la $g$--suite uniforme est $q$--admissible, la
  propostion \ref{propg} s'applique, ce qui termine la preuve.
\end{proof}

\begin{lem}\label{qmaj}
Soit $n$ entier et  $q$ le plus grand facteur premier de $g(n)$.
Lorsque $n$ tend vers l'infini, on a
\begin{equation}
q = \Pplus(g(n)) \le \log g(n) (1+ \bigo{\veps})
\end{equation}
avec $\veps = \veps_1 + \sqrt\veps_2$ o\`u $\veps_1$ et $\veps_2$ 
sont définis par
\[
\veps_1 = \max_{x \ge \frac{q}{2}}\abs{\frac{\theta(x)}{x}-1}
\qquad 
\veps_2 = 
\max\left(
1-\eta_3\Big(\frac{q}{4}\Big),
\pfrac{\log q}{\sqrt q}^2
\right)
< 1.
\]
\end{lem}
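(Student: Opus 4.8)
The plan is to start from the inequality \eqref{eq11} of Lemma~\ref{lem12}, which for any $k \ge 1$ gives
\[
q \le \frac{\log g(n)}{\dfrac{\theta(q\gm_{k+1})}{q} - k\dfrac{\log q}{q}},
\]
and to optimize the choice of the truncation index $k$. The denominator is, up to lower order terms, $\gm_{k+1} - k(\log q)/q$. I would push $\gm_{k+1}$ as close as possible to its limit $L_\veps = 1/(1+\sqrt{\veps_2})$ while keeping the correction $k(\log q)/q$ negligible; by Lemma~\ref{lemeps}, $L_\veps - \gm_{k+1} \le L_\veps(1-\sqrt{\veps_2})^{k+1}$, so choosing $k$ of order $(\log q)/\sqrt{\veps_2}$—more precisely the smallest $k$ for which $(1-\sqrt{\veps_2})^{k+1}$ is of size $O(\veps_2)$—makes $\gm_{k+1} = L_\veps(1+O(\veps_2))$ while $k(\log q)/q = O\!\big((\log^2 q)/(q\sqrt{\veps_2})\big)$. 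Since the hypothesis forces $(\log q/\sqrt q)^2 \le \veps_2$, this last term is $O(\sqrt{\veps_2}\,\log q/q)$, which is $o(1)$ and in particular absorbed into the $O(\veps)$ error.

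Next I would estimate the factor $\theta(q\gm_{k+1})/q$. Writing $\theta(x) = x(1 + O(\veps_1))$ for $x \ge q/2$ (legitimate since $q\gm_{k+1} \ge q\gm_1 = q/2$, using $\veps_1 = \max_{x\ge q/2}\lvert \theta(x)/x - 1\rvert$), we get
\[
\frac{\theta(q\gm_{k+1})}{q} = \gm_{k+1}\big(1 + O(\veps_1)\big) = L_\veps\big(1 + O(\veps_2)\big)\big(1 + O(\veps_1)\big).
\]
Combining this with the bound on $k(\log q)/q$, the whole denominator becomes $L_\veps\big(1 + O(\veps_1 + \veps_2)\big)$. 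Inverting, and using $1/L_\veps = 1+\sqrt{\veps_2} = 1 + O(\sqrt{\veps_2})$, yields
\[
q \le \log g(n)\cdot\frac{1 + O(\veps_1+\veps_2)}{L_\veps} = \log g(n)\,\big(1 + O(\veps_1 + \sqrt{\veps_2})\big),
\]
which is exactly the claimed bound with $\veps = \veps_1 + \sqrt{\veps_2}$.

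The main technical point to watch is the interplay between the two competing error terms in the denominator: the tail $L_\veps - \gm_{k+1}$ shrinks geometrically in $k$, while the penalty $k(\log q)/q$ grows linearly, so $k$ cannot be taken arbitrarily large. I expect the crux of the argument to be verifying that a single choice of $k$ simultaneously makes the geometric tail $O(\veps_2)$ and keeps the linear penalty within $O(\sqrt{\veps_2})$; this is where the hypothesis $(\log q/\sqrt q)^2 \le \veps_2$ is essential, as it controls $(\log q)/q$ in terms of $\sqrt{\veps_2}/\sqrt q$ and guarantees the penalty is dominated by the error we are allowed. The remaining steps—applying Lemma~\ref{lemeps} for the tail, the effective Chebyshev estimates of \S\ref{sectionthmin} for $\theta$, and collecting the $O$-terms—are routine, and the admissibility needed to invoke \eqref{eq11} is furnished directly by Lemma~\ref{lem12} once $\eta \le \eta_3(q/4)$, i.e. once $\veps_2 \ge 1 - \eta_3(q/4)$.
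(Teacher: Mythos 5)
Your proposal is correct and follows essentially the same route as the paper's proof: start from \eqref{eq11} of Lemma \ref{lem12} (admissibility coming from $\veps_2 \ge 1-\eta_3(q/4)$), take $k$ of order $(\log q)/\sqrt{\veps_2}$ (the paper chooses $k=\lfloor \log q/\sqrt{\veps_2}\rfloor$, which makes the geometric tail $L_{\veps_2}-\gm_{k+1}\le 1/q$), use Lemma \ref{lemeps} twice to get $1-L_{\veps_2}\le\sqrt{\veps_2}$ and the tail bound, control $\theta(q\gm_{k+1})$ via $\veps_1$ since $q\gm_{k+1}\ge q/2$, and invoke $\veps_2\ge(\log q/\sqrt q)^2$ to absorb the penalty $k(\log q)/q$. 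One harmless slip: that penalty is bounded by $(\log^2 q)/(q\sqrt{\veps_2}) \le \log q/\sqrt q \le \sqrt{\veps_2}$, i.e. it is $\bigo{\sqrt{\veps_2}}$ rather than your stated $\bigo{\sqrt{\veps_2}\,\log q/q}$, but this corrected bound is still absorbed into the $\bigo{\veps}$ term exactly as you conclude.
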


\begin{proof}
Soit
\begin{equation}\label{defk}
\eta = 1-\veps_2
\qtx{ et }
k=\intfrac{\log q}{\sqrt\veps_2}\cdot
\end{equation}
Lorsque $n \to +\infty$, par \eqref{lab5}, 
$q = \Pplus(g(n))$ tend aussi vers l'infini, $\veps_1$, $\veps_2$
et $\veps$ tendent vers $0$ et $k$ tend vers l'infini.

Puisque \dsm{\eta \le \eta_3\left(\frac{q}{4}\right)},
le lemme \ref{lem12} s'applique, et l'on a
$q\gm_{k+1} \ge q\gm_1 = q/2$.
Par définition de $\veps_1$, il vient
\dsm{
\frac{\theta(q\gm_{k+1})}{q\gm_{k+1}} \ge 1 - \veps_1,
}
d'où
\begin{eqnarray}\nonumber
\frac{\theta(\gm_ {k+1} q)}{q}  &\ge& 
\gm_{k+1} - \gm_{k+1} \veps_1\\
&\ge& \gm_{k+1} - \veps_1
= 1 - \veps_1 - (1-L_{\veps_2}) - (L_{\veps_2}-\gm_{k+1}).
\label{l121}
\end{eqnarray}
Une première utilisation du lemme \ref{lemeps} donne
\begin{equation}\label{l122}
1-L_{\veps_2} = 1 - \frac{1}{1+\sqrt\veps_2} \le \sqrt\veps_2.
\end{equation}
Par \eqref{defk}, on a  $k+1 > \dfrac{\log q}{\sqrt\veps_2}$;
une deuxième utilisation du lemme \ref{lemeps} donne
\begin{equation*}
L_{\veps_2} - \gm_{k+1} \le
(1-\sqrt\veps_2)^{k+1} \le (1-\sqrt\veps_2)^{\frac{\log q}{\sqrt\veps_2}}
\le \pfrac{1}{e}^{\log q}
= \frac{1}{q}\cdot
\end{equation*}
Avec \eqref{l121}, \eqref{l122} et la défintion de $\veps$ cela donne
\begin{equation}\label{qgkp1q}
\frac{\theta(q\gm_{k+1})}{q} \ge
  1 - \veps_1 - \sqrt\veps_2 - \frac{1}{q}
=  1 - \veps - \frac{1}{q}\cdot
\end{equation}
La définition \eqref{defk} de $k$ donne
\dsm{
k \frac{\log q}{q} \le
\dfrac{\log q}{\sqrt\veps_2}\dfrac{\log q}{q} 
\cdot
}
Avec \eqref{qgkp1q}, on en déduit
\[
\frac{\theta(q\gm_{k+1})}{q} - k \frac{\log q}{q} \ge
1-\veps - \dfrac{1}{q} - \frac{\log^2 q}{q\sqrt\veps_2}\cdot
\]
Par définition de $\veps_2$, pour $q \ge 3$, on a 
\dsm{\frac{1}{q} \le \frac{\log^2 q}{q} \le \veps_2 \le \sqrt{\veps_2}},
ce qui donne 
\[
\frac{\theta(q\gm_{k+1})}{q} - k \frac{\log q}{q} \ge
1 - \veps - \frac{1}{q} - \sqrt{\veps_2}  \ge 1 - \veps -2\sqrt\veps_2
\ge 1 - 3\veps
\]
et termine la preuve avec \eqref{eq11}.
\end{proof}

\section{Majoration asymptotique de $\Pplus(g(n))$}

\begin{theorem}\label{t2}
Soit $P^{+}(g(n))$ le plus grand facteur premier de $g(n)$.
Lorsque $n$ tend vers l'infini, $P^{+}(g(n))$ est majoré par~:
\begin{enumerate}
\item
Sans aucune hypothèse, il existe $a > 0$ tel que
\begin{equation}\label{truemaj}
P^{+}(g(n)) \le \sqrt{\Li^{-1}(n)} + \bigo{\sqrt n e^{-a\sqrt{\log n}}}.
\end{equation}
\item
Si l'hypothèse de Riemann est vraie
\begin{equation}\label{Riemanmaj}
P^{+}(g(n)) \le \sqrt{\Li^{-1}(n)} + \bigo{n^{3/8}(\log n)^{7/8}}.
\end{equation}
\item
Si l'hypothèse de Riemann et la conjecture de Cramér \eqref{conjcra}
sont vraies,
\begin{equation}\label{Pplusjcra}
\Pplus(g(n)) \le \sqrt{\Li^{-1}(n)} + \bigo{n^{1/4}(\log n)^{9/4}}. 
\end{equation}
\end{enumerate}
\end{theorem}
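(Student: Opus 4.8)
The plan is to feed the three prime-number-theoretic inputs into Lemma~\ref{qmaj}, which already reduces everything to estimating the single quantity $\veps = \veps_1 + \sqrt{\veps_2}$. Writing $q = \Pplus(g(n))$, that lemma gives $q \le \log g(n)\,(1 + \bigo{\veps})$, hence
\[
q \le \log g(n) + \bigo{\veps \log g(n)}.
\]
The equivalence \eqref{lab5} yields $q \sim \log g(n) \sim \sqrt{n\log n}$, so that $\log q \sim \tfrac12\log n$ and $\sqrt q \sim (n\log n)^{1/4}$; these relations convert every bound expressed through $q$ into one through $n$. It then remains, in each of the three regimes, to estimate $\veps_1$ and $\veps_2$ and to check that $\bigo{\veps\log g(n)}$ dominates the remainder in the asymptotic expansion of $\log g(n)$.

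For $\veps_1 = \max_{x \ge q/2}\abs{\theta(x)/x - 1}$ I use \eqref{TNP1} unconditionally (case~1) and \eqref{TNP3} under the Riemann hypothesis (cases~2 and~3). Each of these bounds is eventually decreasing in $x$, so the supremum over $x \ge q/2$ is attained near $x = q/2$, giving $\veps_1 = \bigo{e^{-a\sqrt{\log q}}}$ in case~1 and $\veps_1 = \bigo{\log^2 q/\sqrt q}$ in cases~2 and~3. For $\veps_2 = \max\!\big(1-\eta_3(q/4),\,(\log q/\sqrt q)^2\big)$ the term $1-\eta_3(q/4)$ is controlled by Proposition~\ref{propmin}: part~1 gives $\bigo{q^{-0.475}}$ (case~1), part~3 gives $\bigo{\log q/\sqrt q}$ (case~2), part~4 gives $\bigo{\log^2 q/q}$ (case~3). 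Comparing with $(\log q/\sqrt q)^2 = \log^2 q/q$ and keeping the larger argument yields $\veps_2 = \bigo{q^{-0.475}}$, $\bigo{\log q/\sqrt q}$, $\bigo{\log^2 q/q}$, hence $\sqrt{\veps_2} = \bigo{q^{-0.2375}}$, $\bigo{q^{-1/4}\sqrt{\log q}}$, $\bigo{\log q/\sqrt q}$. Adding $\veps_1$ and $\sqrt{\veps_2}$ and retaining the dominant term (the subexponential $\veps_1$ wins in case~1, $\sqrt{\veps_2}$ wins in case~2, and $\veps_1$ wins again in case~3) gives
\[
\veps = \bigo{e^{-a\sqrt{\log q}}}, \quad \bigo{q^{-1/4}\sqrt{\log q}}, \quad \bigo{\log^2 q/\sqrt q}
\]
in cases~1, 2 and~3 respectively.

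To conclude I substitute $\log g(n) = \sqrt{\Li^{-1}(n)} + \bigo{\sqrt n\, e^{-a\sqrt{\log n}}}$ from \eqref{asy} in case~1, and $\log g(n) = \sqrt{\Li^{-1}(n)} + \bigo{(n\log n)^{1/4}}$, namely \eqref{asyTH} with $\Theta = 1/2$, in cases~2 and~3, then rewrite $\veps\log g(n)$ in terms of $n$ using $q \sim \sqrt{n\log n}$ and $\log q \sim \tfrac12\log n$. Case~1 gives $\veps\log g(n) = \bigo{\sqrt{n\log n}\,e^{-a\sqrt{\log q}}}$, and since $\sqrt{\log q}\sim\tfrac1{\sqrt2}\sqrt{\log n}$ while $\log_2 n \ll \sqrt{\log n}$, the factor $\sqrt{\log n}$ and the change of constant in the exponent are absorbed to give $\bigo{\sqrt n\, e^{-a'\sqrt{\log n}}}$; case~2 gives $\bigo{(n\log n)^{-1/8}(\log n)^{1/2}(n\log n)^{1/2}} = \bigo{n^{3/8}(\log n)^{7/8}}$; case~3 gives $\bigo{(\log n)^2 (n\log n)^{-1/4}(n\log n)^{1/2}} = \bigo{n^{1/4}(\log n)^{9/4}}$. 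In every case this term dominates the remainder from the expansion of $\log g(n)$, producing exactly \eqref{truemaj}, \eqref{Riemanmaj} and \eqref{Pplusjcra}. The delicate point is precisely this final bookkeeping: one must verify, using $\log n \gg \sqrt{\log n}$ and the relations above, that each power of $q$ turns into the asserted power of $n$ and that $\bigo{\veps\log g(n)}$ genuinely swamps the error term in the chosen expansion of $\log g(n)$; the arithmetic itself is routine once those asymptotic relations are recorded.
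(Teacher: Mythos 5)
Your proposal is correct and follows essentially the same route as the paper: in all three cases you apply Lemma~\ref{qmaj}, bound $\veps_1$ via \eqref{TNP1} (resp.\ the Riemann-hypothesis estimate) and $\veps_2$ via parts 1, 3 and 4 of Proposition~\ref{propmin}, then substitute \eqref{asy} or \eqref{asyTH} together with $q \sim \sqrt{n\log n}$, $\log q \sim \tfrac12\log n$ and $\Li^{-1}(n)\sim n\log n$, exactly as the authors do. The only cosmetic divergence is that you invoke \eqref{TNP3} where the paper cites \eqref{TNP2} with $\Theta=1/2$ for $\veps_1$ under RH — these give the same bound $\bigo{\log^2 q/\sqrt q}$ — and your final bookkeeping reproduces the paper's exponents $n^{3/8}(\log n)^{7/8}$ and $n^{1/4}(\log n)^{9/4}$ correctly.
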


\begin{proof}
Soit $q=\Pplus(g(n))$. Par \eqref{lab5} on a
\begin{equation}\label{qequiv}
q \sim \sqrt{n \log n}
\qtx{ et }
\log q  \sim \frac{1}{2} \log n.
\end{equation}
Nous utiliserons aussi l'équivalence
\begin{equation}\label{L1equiv}
\Li^{-1}(n) \sim n \log n.
\end{equation}
Nous allons appliquer le lemme \ref{qmaj} en évaluant dans les  
3 cas les quantités $\veps_1$, $\veps_2$ et 
$\veps = \veps_1 + \sqrt{\veps_2}$.

\begin{enumerate}
\item
Par le théorème des nombres premiers, \eqref{TNP1}, il existe 
$a_1 > 0$ tel que
\[
\veps_1 = \bigo{\exp(-a_1\sqrt{\log n})}.
\]
Par la proposition \ref{propmin}, 1., on a
\[
1-\eta_3\left(\frac{q}{4}\right) = \bigo{q^{-0.475}}.
\]
On a donc, par \eqref{qequiv},
\begin{equation}\label{}
\sqrt\veps_2 = \bigo{q^{-0.2375}} = \bigo{n^{-0.11875}}
\end{equation}
et $\veps = \veps_1 + \sqrt{\veps_2} =\bigo{\exp(-a_1\sqrt{\log n}}$.
L'application du lemme \ref{qmaj} fournit ensuite l'inégalité
\begin{equation}\label{majq1}
q \le (\log g(n)) \Big(1+ \bigo{\exp(-a_1\sqrt{\log n})}
\Big).
\end{equation}
Par \eqref{asy} il existe $a_2 > 0$ tel que
\[
\log g(n) = \sqrt{\Li^{-1}(n)} + 
\bigo{\sqrt ne^{-a_2\sqrt{\log n}}}
\]
ce qui, avec \eqref{majq1} et \eqref{L1equiv} démontre \eqref{truemaj}
pour $a < \min(a_1,a_2)$.

\item
Si l'hypothèse de Riemann est vraie, on a par \eqref{TNP2},
\begin{equation}\label{majeps1R}
\veps_1 = \bigo{\frac{\log^2 q}{\sqrt q}}
\end{equation}
et le point 3. de la proposition \ref{propmin} donne
\[
1-\eta_3\pfrac{q}{4} = \bigo{\frac{\log q}{\sqrt q}}\cdot
\]
On obtient donc
\[
\veps_2 = \bigo{\log q/\sqrt q} 
\]
et, par \eqref{qequiv}
\[
\veps = \veps_1 + \sqrt{\veps_2} = \bigo{\frac{\sqrt{\log q}}{q^{1/4}}}
= \bigo{\frac{(\log n)^{3/8}}{n^{1/8}}}
\]
puis, par le lemme \ref{qmaj},
\begin{equation*}
q  \le \log g(n)
\left(1 + \bigo{\frac{(\log n)^{3/8}}{n^{1/8}}}\right)
\end{equation*}
qui, avec \eqref{asyTH} et \eqref{L1equiv} donne 
\eqref{Riemanmaj}.

\item
L'estimation \eqref{majeps1R} reste valable, tandis que \eqref{etacra}
donne $\veps_2 = \bigo{\dfrac{\log^2 q}{q}}$  et donc
\[
\veps = \bigo{\frac{\log^2 q}{\sqrt q}} 
= \bigo{\frac{(\log n)^{7/4}}{n^{1/4}}},
\]
qui, par le lemme \ref{qmaj}, \eqref{asyTH}
et \eqref{L1equiv}, prouve \eqref{Pplusjcra}.

On notera que \eqref{Pplusjcra} reste valable si l'on remplace la
conjecture de Cramér \eqref{conjcra} par la conjecture plus faible
\dsm{
p_{i+1} - p_i = \bigo{\log^4 p_i}.
}
\end{enumerate}
\end{proof}

\section{Majoration effective de $\Pplus(g(n))$}
\begin{theorem}\label{t3}
Pour $n \ge 2$ on a
\begin{equation}\label{eqt3}
\Pplus(g(n)) \le \log g(n) \bigg(1+\frac{5.54}{\log n}\bigg)
\le \sqrt{n\log n}\left(1 + \frac{\log_2 n+10.8}{2\log n}\right).
\end{equation}
\end{theorem}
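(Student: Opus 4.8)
The plan is to establish the two inequalities of \eqref{eqt3} separately. The right-hand inequality merely compares two explicit functions of $n$ and is elementary; the left-hand inequality $\Pplus(g(n)) \le \log g(n)\bigl(1+5.54/\log n\bigr)$ carries the arithmetic content and I would treat it by splitting $n$ into a small range handled by direct computation and a large range handled analytically.

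For the left-hand inequality and large $n$, I would make the proof of Lemme \ref{qmaj} effective. Writing $q=\Pplus(g(n))$, $\veps_1=\max_{x\ge q/2}\abs{\theta(x)/x-1}$ and $\veps_2=\max\bigl(1-\eta_3(q/4),(\log q/\sqrt q)^2\bigr)$, that proof already shows, for $q\ge 3$ with the uniform $g$--suite of parameter $\eta=1-\veps_2$ and $k=\intfrac{\log q}{\sqrt{\veps_2}}$, that the denominator in \eqref{eq11} of Lemme \ref{lem12} is at least $1-3\veps$, whence
\[
q \le \frac{\log g(n)}{1-3\veps}, \qquad \veps=\veps_1+\sqrt{\veps_2}.
\]
To turn this into the stated bound I would estimate $\veps$ explicitly: bound $\veps_1 \le \theta_d(q/2)/\log^2(q/2)$ via \eqref{propthetad} and the $\theta_d$--champion table \ref{tableThetad}, and $1-\eta_3(q/4)\le \dt_3(q/4)/\log^2(q/4)$ via \eqref{pd3} and the $\dt_3$--champion table \ref{tabledelta3}. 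This yields an explicit decreasing majorant $\veps\le\vphi(q)$, so that $1/(1-3\veps)\le 1+3\vphi(q)/\bigl(1-3\vphi(q)\bigr)$. It then remains to check $3\vphi(q)/\bigl(1-3\vphi(q)\bigr)\le 5.54/\log n$ for all $n$ beyond a threshold $N_1$. Here one uses the lower bound $q=\Pplus(g(n))\gsim\sqrt{n\log n}$ from \S\,\ref{parmino} together with \eqref{lab5}, so that $\log n$ is comparable to $2\log q$ up to an explicit factor; since $\vphi(q)=\bigo{1/\log q}$, the left side is then comfortably dominated by $5.54/\log n$ once $q$ is large.

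For $2\le n\le 10^6$ I would invoke the direct computation of $g(n)$ and $\Pplus(g(n))$; the constant $5.54$ is precisely the least value making the inequality hold throughout this range, and it is attained near the extremal $n$. The delicate point, and the main obstacle, is to guarantee that the analytic threshold satisfies $N_1\le 10^6$: at moderate arguments $q\approx\sqrt{n\log n}$ (so $q/2$ and $q/4$ are only in the thousands near $n=10^6$), the bounds $\theta_d\le 0.2$ and $\dt_3\le 0.12$ are \emph{not} yet available, and one must instead read off the genuine champion table values at these intermediate arguments to make $\veps$ small enough. Controlling this crossover, rather than the asymptotics, is where the real effort lies.

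Finally, for the right-hand inequality I would bound $\log g(n)$ from above by the two effective estimates \eqref{L1} and \eqref{L3}, using whichever is sharper. Substituting \eqref{L3}, dividing by $\sqrt{n\log n}$ and expanding the product reduces the claim to an elementary inequality of the form $\log_2 n \le c_1\log n + c_2$ in the variable $t=\log n$, valid for large $t$; substituting the constant bound \eqref{L1} reduces it instead to $c_3\log n \le \log_2 n + c_4$, valid on a bounded range of $t$. A short check shows these two ranges together cover all $n\ge 3$, and $n=2$ is verified by hand, which completes the proof.
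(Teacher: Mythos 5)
Your overall architecture is the right one — computation for $n\le 10^6$, then the uniform $g$--suite machinery of Lemme \ref{lem12} with the tables of $\theta_d$ and $\dt_3$ for $n\ge 10^6$, plus the product of \eqref{L3} with the first inequality for the right-hand bound — but there is a genuine quantitative gap at the crossover, and it is precisely the point you flagged as ``the main obstacle'' without resolving it. The packaged bound $q\le \log g(n)/(1-3\veps)$ from Lemme \ref{qmaj} is far too lossy at $n=10^6$: there $q\ge 5077$ (this lower bound the paper gets by the clean conditional device \eqref{qsqrtn} — assume $q>\log g(n)(1+5/\log n)$, otherwise one is done since $5<5.54$ — rather than your forward appeal to \S\,\ref{parmino}, which would not be circular but is unnecessary), and the genuine table values give $\veps_1\le \theta_d(2538)/\log^2 2538\approx 0.037$ and $\veps_2\le \dt_3(1269)/\log^2 1269\approx 1.7544/51.1\approx 0.034$, hence $\sqrt{\veps_2}\approx 0.185$, $\veps\approx 0.22$, and $3\veps/(1-3\veps)\approx 2.0$, versus the required $5.54/\log 10^6\approx 0.40$. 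The culprit is not the size of $\veps_2$ (the paper's $\veps=7.02/\log^2 n\approx 0.037$ is of the same order) but the reduction to $1-3\veps$ itself: the final lines of Lemme \ref{qmaj} replace $1/q+\log^2 q/(q\sqrt{\veps_2})\approx 0.078$ by $2\sqrt{\veps_2}\approx 0.37$, and then triple the already dominant term $\sqrt{\veps_2}$ inside $\veps$. Chasing your inequality $3\vphi(q)/(1-3\vphi(q))\le 5.54/\log n$ with the true champion values shows it only becomes valid around $n\sim 10^9$; so with your reduction the analytic threshold $N_1$ exceeds the computational range $10^6$ by three orders of magnitude, and the proof as planned does not close.

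The paper's proof repairs exactly this by never leaving the exact denominator of \eqref{eq11}. It sets $\veps=7.02/\log^2 n$ and $\eta=1-\veps$ (checking $\eta\le\eta_3(0.25\,q)$ via $\dt_3(1269)\le 1.755$ and $\log(0.25\,q)>0.5\log n$), tunes $k=\lfloor 0.64\log n\log_2 n\rfloor\ge 23$ (smaller than the $k=\lfloor\log q/\sqrt{\veps_2}\rfloor\approx 46$ inherited from Lemme \ref{qmaj}, which cuts the term $k\log q/q$ to at most $0.71/\log n$), bounds $\gm_{k+1}\ge 1-2.812/\log n$ directly from \eqref{gmlim}, and uses $\theta_d(4253)\le 1.863$ (after showing $q\gm_{k+1}\ge 4253$) to get $\theta(q\gm_{k+1})/q\ge (1-0.54/\log n)\gm_{k+1}$. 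This yields the denominator $1-4.062/\log n+1.518/\log^2 n$, whose reciprocal is $\le 1+5.54/\log n$ exactly at the junction $n=10^6$ — each term $\sqrt{\veps}$, $k\log q/q$, $\veps_1$-analogue is charged once, not three times. Your treatment of the right-hand inequality (combining \eqref{L1} and \eqref{L3} on complementary ranges) is sound in spirit and close to the paper's, which simply expands the product with \eqref{L3} for $n\ge 10^6$ and absorbs the cross term using $5.54(\log_2 n-0.975)/\log n\le 0.69$ there; but the left-hand inequality needs the retuned argument above, not an effectivization of Lemme \ref{qmaj} as stated.
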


\begin{proof}
Pour $2 \le n \le 10^6$, on calcule $\Pplus(g(n))$ et $\log(g(n))$
par l'algorithme naïf de (\cite{DNZ}, \cite{NICRIRO})
et on vérifie que les inégalités \eqref{eqt3} sont satisfaites.
Soit $n \ge 10^6$. Supposons que l'on ait
\begin{equation}\label{qsqrtn}
q > \log g(n)\left(1+ \frac{5}{\log n}\right).
\end{equation}
Puisque $n \ge 10^6 > 906$ il résulte de \eqref{L2} que
\[
q > \sqrt{n \log n}\left(1+ \frac{5}{\log n}\right) =
\sqrt{n} \left(\sqrt{\log n} + \frac{5}{\sqrt{\log n}} \right).
\]
Avec la croissance de $t \mapsto t + \dfrac{5}{t}$ pour $t \ge \sqrt
5$ et $\sqrt{\log{10^6}} \approx 3.7 > \sqrt 5$, on en déduit \dsm{q >
  \sqrt{n}\left(\sqrt{\log 10^6}+ \frac{5}{\sqrt{\log 10^6}} \right)
  > 5.062 \sqrt{n} \ge 5\,062}. Le plus petit nombre premier $> 5\,
062$ est $5\,077$. On a donc

\begin{equation}\label{qnmin}
q \ge 5\,077,\qquad
0.25\, q \ge 1269
\end{equation}
et
\begin{equation}\label{log25q}
\log(0.25\, q) > \log (0.25 \times 5.062 \sqrt{n}) 
> \log \sqrt{n}) 
\ge 0.5 \log n.
\end{equation}
Définissons $\eta$ et $\veps$ par 
\begin{equation}\label{defeta}
\veps = \frac{7.02}{\log^2 n}
\qtx{ et }
\eta = 1-\veps.
\end{equation}
De \eqref{defeta} on déduit d'une part
\begin{equation}\label{minsqrteps}
\frac{2.649}{\log n} \le \sqrt{\veps} \le \frac{2.650}{\log n}
\end{equation}
et, aussi,  avec $n \ge 10^6$, 
\begin{equation}\label{minunmoinseps}
\veps < 0.0368
\qtx{ et }
\eta = 1-\veps > 0.9632.
\end{equation}
La définiton \eqref{defeta} de $\veps$,
la minoration \eqref{log25q}, la table  \ref{tabledelta3} et 
enfin la minoration \eqref{qnmin} avec la décroissance
de $\dt_3$ donnent
\begin{equation}\label{majeps}
 \veps = \frac{7.02}{\log^2 n} \ge 
\frac{1.755}{\left(\log(0.25\,q)\right)^2} 
\ge \frac{\dt_3(1\,269)}{\left(\log(0.25\,q)\right)^2} 
\ge \frac{\dt_3(0.25\, q)}{\left(\log(0.25\,q)\right)^2}
\end{equation}
et donc, avec la majoration \eqref{pd3}, (dans laquelle on pose $x=y=0.25\,q$),
\[
\eta = 1 - \veps 
\le 1 - \frac{\dt_3(0.25\,  q)}{\left(\log(0.25\,q)\right)^2}
\le \eta_3\left(0.25\,q \right).
\]
La $g$--suite uniforme de paramètre $\eta$
satisfait les hypothèses du lemme \ref{lem12}.
On applique ce lemme en choisissant
\begin{equation}\label{defkeff}
k = \lfloor 0.64\,\log n \log_2 n \rfloor
\ge\lfloor 0.64\,\log 10^6 \log_2 10^6 \rfloor
= 23.
\end{equation}
Nous avons remarqué au paragraphe \ref{gsuiteuniforme}
(remarque \ref{gmcroit})
que $\gm_k = \gm_k(\eta)$ est une fonction croissante
de $\eta$ et de $k$.
Puisque, par \eqref{minunmoinseps}, $\eta > 0.9632$
et $k \ge 23$ on a  $\gm_{k+1} \ge \gm_{24}(0.9632) > 0.8378$.
Par \eqref{qnmin} et \eqref{log25q}, on en déduit
\[
q\gm_{k+1} \ge 0.8378 \cdot 5\,077  \ge 4253
\qtx{ et }
\log q\gm_{k+1} \ge \log 0.25\, q \ge 0.5 \log n
\]
puis, par \eqref{propthetad}, et à l'aide de la table
\ref{tableThetad}
\begin{equation}\label{thetaqq}
\frac{\theta(q\gm_{k+1})}{q\gm_{k+1}} \ge  
1 - \frac{\theta_d(4253)}{(\log q\gm_{k+1})^2}
\ge
1 - \frac{1.863}{(\log q\gm_{k+1})^2}
\ge 1 - \frac{7.452}{(\log n)^2}\cdot
\end{equation}
En utilisant  $n \ge 10^6$, on en déduit
\begin{equation}\label{minqgmq}
\frac{\theta(q\gm_{k+1})}{q} \ge 
\left(1 - \frac{7.452}{\log^2 n}\right)\gm_{k+1}
\ge \left(1-\frac{0.54}{\log n} \right)\gm_{k+1}.
\end{equation}
Minorons enfin $\gm_{k+1}$~: De \eqref{gmlim}, en utilisant
\eqref{minsqrteps} et \eqref{defkeff}, on déduit
\begin{eqnarray*}
\gm_{k+1} &\ge& \frac{1}{1+\sqrt\veps} 
- \left(1-\sqrt{\veps}\right)^{k+1} 
\ge 
 1 - \sqrt{\veps} - \left(1-\frac{2.649}{\log n} \right)^{k+1}\\
&\ge& 1-\sqrt{\veps} - 
\left(1-\frac{2.649}{\log n} \right)^{0.64\, \log_2 n \log n}
 \\
&\ge& 1 - \sqrt{\veps} - 
\left[\left(1-\frac{2.649}{\log  n}\right)^{\frac{\log n}{2.649}}%
\right]^{0.64\,\times 2.649\, \log_2(n)}\\
&\ge& 1 - \frac{2.65}{\log n} - \frac{1}{(\log n)^{0.64\times2.649}} \ge 1-\frac{2.812}{\log n}\cdot
\end{eqnarray*}
Avec \eqref{minqgmq} cela donne
\begin{equation}\label{eq1f}
\frac{\theta(q\gm_{k+1})}{q} \ge
\left(1-\frac{0.54}{\log n} \right)
\left( 1-\frac{2.812}{\log n} \right)
\ge 1 - \frac{3.352}{\log n} + \frac{1.518}{\log^2 n}\cdot
\end{equation}
Puisque \dsm{\frac{\log t}{t}} est une fonction décroissante de $t$
pour $t \ge e$, et $q \ge \sqrt{n\log n}$, on a
\[
\frac{\log q}{q} \le \frac{1}{2}\frac{\log(n\log n)}{\sqrt{n\log n}}\cdot
\]
En utilisant \eqref{defkeff} 
et la décroissance de
\dsm{t \mapsto (\log t)^{3/2} \log_2 t \log(t \log t)/\sqrt t},
pour $t \ge 10^6$, on en déduit
\begin{equation}\label{eq2f}
k\frac{\log q}{q} \le 0.32\,\log n \log_2 n
\frac{\log(n\log n)}{\sqrt{n\log n}} 
\le 
 \frac{0.71}{\log n}\cdot
\end{equation}
Avec la formule \eqref{eq11} du lemme \ref{lem12},
\eqref{eq1f} et \eqref{eq2f} donnent
\[
q \le \frac{\log g(n)}{1-\dfrac{4.062}{\log n}+\dfrac{1.518}{\log^2 n}}\cdot
\]
Or, sur l'intervalle $0 \le X \le 1/\log 10^6$, la fraction
\dsm{\frac{4.062 - 1.518 X}{1-4.062 X + 1.518 X^2}} 
est croissante et majorée par $5.54$. On en déduit,
pour $n \ge 10^6$, 
\[
\frac{1}{1-\dfrac{4.062}{\log n}+\dfrac{1.518}{\log^2 n}}
= 1 + \frac{\dfrac{4.062}{\log n}-\dfrac{1.518}{\log^2 n}}{1-\dfrac{4.062}{\log n}+\dfrac{1.518}{\log^2 n}}
\le 1+\dfrac{5.54}{\log n}
\]
ce qui, avec \eqref{qsqrtn} prouve la première inégalité
de \eqref{eqt3}. 
En appliquant \eqref{L3}, on en déduit
\begin{eqnarray*}
q &\le&  \sqrt{n\log n} 
\left(1+\frac{\log_2 n-0.975}{2\log n} \right)%
\left(1+\dfrac{5.54}{\log n}\right)
\\
&\le&
\sqrt{n\log n} 
\left(1+\frac{\log_2 n+10.11}{2\log n} 
+ \dfrac{5.54}{\log n}\frac{\log_2 n-0.975}{2\log n} \right)\\
&\le&
\sqrt{n\log n} 
\left(1+\frac{\log_2 n+10.11}{2\log n} 
+ \dfrac{1}{2\log n}\dfrac{5.54(\log_2 10^6-0.975)}{\log 10^6} \right)
\end{eqnarray*}
ce qui complète la preuve de \eqref{eqt3}.
\end{proof}

\section{Minoration de $\Pplus(g(n))$}\label{parmino}

\begin{lem}\label{parminolemm1}
Soit $n \ge 2$, $\al \ge 2$ et $\lb$ premier tel que $\lb^{\al}$
divise $g(n)$. Soit $q$ le nombre premier suivant $\Pplus(g(n))$.
On a
\begin{equation}\label{m11}
\lb^{\al} - \lb^{\al-1} < q,
\end{equation}
\begin{equation}\label{m12}
\lb < q^{1/\al} + 1 \cdot
\end{equation}
et
\begin{equation}\label{m13}
\al < \frac{\log(2q)}{\log 2}\cdot
\end{equation}
\end{lem}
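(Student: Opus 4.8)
Le point central est l'inégalité \eqref{m11} ; les deux autres en découlent par des manipulations élémentaires. Le plan est de reprendre l'argument de remplacement esquissé dans l'introduction. Comme $q$ est le nombre premier suivant $\Pplus(g(n))$, il ne divise pas $g(n)$ et l'on a $q > \Pplus(g(n)) \ge \lb$. Je noterais $\bt \ge \al$ l'exposant exact de $\lb$ dans $g(n)$ et je poserais
\[
M = \frac{q}{\lb}\, g(n).
\]
Puisque $\bt \ge 1$, le nombre $M$ est entier, et comme $q/\lb > 1$, on a $M > g(n)$.

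Je calculerais ensuite $\ell(M)$ grâce à l'additivité de $\ell$ et à \eqref{ldef}. En passant de $g(n)$ à $M$, l'exposant de $\lb$ descend de $\bt$ à $\bt-1$, tandis que $q$ apparaît avec l'exposant $1$ ; l'hypothèse $\al \ge 2$ (donc $\bt-1 \ge 1$) garantit que la contribution de $\lb$ vaut $\ell(\lb^{\bt-1}) = \lb^{\bt-1}$ et non la valeur aberrante $\ell(\lb^0) = 0$. On obtient
\[
\ell(M) = \ell(g(n)) - \lb^{\bt} + \lb^{\bt-1} + q.
\]
L'implication \eqref{mpgn} appliquée à $M > g(n)$ donne $\ell(M) > n$, tandis que \eqref{lgn} donne $\ell(g(n)) \le n$ ; d'où $\ell(M) > \ell(g(n))$, soit $q > \lb^{\bt} - \lb^{\bt-1}$. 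La fonction $x \mapsto \lb^{x-1}(\lb-1)$ étant croissante et $\bt \ge \al$, il vient $q > \lb^{\al} - \lb^{\al-1}$, c'est-à-dire \eqref{m11}.

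Il resterait à en déduire \eqref{m12} et \eqref{m13}. Pour \eqref{m12}, j'écrirais $q > \lb^{\al-1}(\lb-1) > (\lb-1)^{\al}$ (car $\lb^{\al-1} > (\lb-1)^{\al-1}$), puis je prendrais la racine $\al$-ième. Pour \eqref{m13}, la minoration $\lb \ge 2$ donne $q > \lb^{\al-1}(\lb-1) \ge 2^{\al-1}$, donc $2q > 2^{\al}$ et $\al < \log(2q)/\log 2$. Le seul point demandant de l'attention est le calcul de $\ell(M)$ : il faut prendre pour $\bt$ l'exposant \emph{exact} de $\lb$ et vérifier $\bt - 1 \ge 1$, ce que l'hypothèse $\al \ge 2$ assure précisément ; tout le reste relève de l'arithmétique élémentaire.
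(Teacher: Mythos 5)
Your proof is correct and takes essentially the paper's approach: the paper simply cites property 5 of \cite{NICAA} for \eqref{m11} and \eqref{m12}, and the replacement argument $M = (q/\lb)\,g(n)$ that you spell out (including the care taken with the exact exponent $\bt$, so that $\bt - 1 \ge 1$ avoids the anomaly $\ell(\lb^0)=0$) is precisely the one sketched in the paper's introduction. Your deduction of \eqref{m13} from $q > \lb^{\al-1}(\lb-1) \ge 2^{\al-1}$ is equivalent to the paper's, which writes $\lb^{\al} < q/(1-\frac{1}{\lb}) \le 2q$ and hence $\al < \log(2q)/\log \lb \le \log(2q)/\log 2$.
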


\begin{proof}
\eqref{m11} et \eqref{m12} constituent la propriété 5 de
\cite{NICAA}, dont la preuve est facile.  De \eqref{m11} on déduit
\[
\lb^{\al} < \frac{q}{1-\frac{1}{\lb}} \le 2q
\]
ce qui implique
\dsm{\al < \log(2q)/\log(\lb) \le \log(2q)/\log(2)}
et prouve \eqref{m13}. 
\end{proof}

\begin{lem}\label{lemm2}
  Soit $\theta$ la fonction de Chebyshev définie en
  \eqref{defthetapsi}. Pour tout $n \ge 2$ on a la majoration
\begin{equation}\label{m21}
\log g(n) \le \theta\left(\Pplus(g(n)\right) + S(q)
\end{equation}
o\`u $q$ est le nombre premier suivant $\Pplus(g(n))$ et
\begin{equation}\label{m22}
S(q) = \sum_{\al=2}^{\log(2q)/\log 2} \theta(q^{1/\al}+1).
\end{equation}
\end{lem}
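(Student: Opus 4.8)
L'idée est d'exprimer $\log g(n)$ comme une somme sur les puissances de nombres premiers, puis de regrouper les termes selon l'exposant. Posons $P = \Pplus(g(n))$ et écrivons $g(n) = \prod_{p \le P} p^{a_p}$, où $a_p \ge 0$ désigne l'exposant de $p$ dans la décomposition de $g(n)$, de sorte que
\[
\log g(n) = \sum_{p \le P} a_p \log p.
\]
En utilisant l'identité de sommation par tranches $a_p \log p = \sum_{\al \ge 1} (\log p)\, \mathbf{1}_{[a_p \ge \al]}$ et en intervertissant les deux sommations, je récrirais cette quantité sous la forme $\log g(n) = \sum_{\al \ge 1} \Theta_\al$, avec $\Theta_\al = \sum_{p\,:\, a_p \ge \al} \log p$.

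Pour $\al = 1$, la somme $\Theta_1$ porte sur les diviseurs premiers de $g(n)$, qui sont tous inférieurs ou égaux à $P$ ; on a donc immédiatement $\Theta_1 \le \theta(P) = \theta(\Pplus(g(n)))$ (l'inégalité pouvant être stricte, car tous les premiers $\le P$ ne divisent pas nécessairement $g(n)$). Pour $\al \ge 2$, j'appliquerais la majoration \eqref{m12} du lemme \ref{parminolemm1} : tout nombre premier $\lb$ tel que $\lb^{\al}$ divise $g(n)$, c'est-à-dire tel que $a_\lb \ge \al$, vérifie $\lb < q^{1/\al} + 1$ ; ces premiers étant tous majorés par $q^{1/\al}+1$, il vient $\Theta_\al \le \theta(q^{1/\al} + 1)$.

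Il resterait à contrôler le nombre de termes de la somme sur $\al \ge 2$ : la majoration \eqref{m13} du même lemme assure que tout exposant $\al \ge 2$ effectivement réalisé vérifie $\al < \log(2q)/\log 2$, de sorte que $\Theta_\al = 0$ au-delà de cette borne et que $\sum_{\al \ge 2} \Theta_\al$ se réduit à la somme finie définissant $S(q)$ en \eqref{m22} (les éventuels termes supplémentaires introduits par l'écriture un peu informelle de la borne supérieure sont positifs ou nuls, ce qui préserve l'inégalité). En additionnant la contribution $\al=1$ et celles pour $\al \ge 2$, j'obtiendrais $\log g(n) \le \theta(\Pplus(g(n))) + S(q)$, soit \eqref{m21}. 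La difficulté principale n'est pas analytique : tout le contenu arithmétique est déjà encapsulé dans le lemme \ref{parminolemm1}, et le seul point réellement délicat est la vérification comptable que le regroupement par couches et la borne \eqref{m13} sur l'exposant maximal sont bien compatibles avec la définition \eqref{m22} de $S(q)$.
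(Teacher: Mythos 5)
Votre preuve est correcte et suit essentiellement la même démarche que celle de l'article~: décomposition de $\log g(n)$ en couches $\sum_{\al\ge 1}\Theta_\al$, majoration de la couche $\al=1$ par $\theta(\Pplus(g(n)))$, puis application de \eqref{m12} et \eqref{m13} du lemme \ref{parminolemm1} pour borner chaque couche $\al\ge 2$ par $\theta(q^{1/\al}+1)$ et tronquer la somme à $\al < \log(2q)/\log 2$. Votre rédaction explicite simplement, via l'indicatrice $\mathbf{1}_{[a_p\ge\al]}$ et l'interversion des sommations, l'étape que l'article écrit directement comme $\log g(n) \le \sum_{\lb \le \Pplus(g(n))}\log\lb + \sum_{\al\ge 2}\sum_{\lb^{\al}\dv g(n)}\log\lb$.
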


\begin{proof}
En désignant par $\lb$ un nombre premier générique et en
appliquant le lemme \ref{parminolemm1} il vient
\begin{eqnarray}
\log g(n) &\le& \sum_{\lb \le \Pplus(g(n))}\log\lb 
+ \sum_{\al \ge 2}\; \sum_{\lb^{\al} \dv g(n)} \log \lb \\
&\le& \theta\Big(\Pplus(g(n))\Big) 
+ \sum_{\al=2}^{\log(2q)/\log 2} \sum_{\lb\le q^{1/\al}+1} \log\lb\\
&=& \theta\Big(\Pplus(g(n))\Big) + S(q).
\end{eqnarray}
\end{proof}

\begin{lem}\label{lemm3}
Soit $q \ge 3$ un nombre premier, $p$ le nombre premier
précédant $q$ et $S(q)$ la somme définie par \eqref{m22}.
On a
\begin{equation}\label{m31}
S(q) < 2.13\, \sqrt p.
\end{equation}
\end{lem}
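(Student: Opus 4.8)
The plan is to exploit the fact that each summand $\theta(q^{1/\alpha}+1)$ differs only slightly from $\theta(q^{1/\alpha})$, so that $S(q)$ is, up to a negligible correction, the prime-power defect $\psi(q)-\theta(q)$, which \eqref{T5} controls cleanly. First I would set $A=\log(2q)/\log 2$ and, for each $\alpha\ge 2$, bound the gap $\theta(q^{1/\alpha}+1)-\theta(q^{1/\alpha})$ by the logarithm of the unique integer lying in the half-open interval $(q^{1/\alpha},q^{1/\alpha}+1]$ (well defined since $q^{1/\alpha}$ is never an integer for a prime $q$ and $\alpha\ge 2$), hence by $\log(q^{1/\alpha}+1)$. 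Since $\sum_{\alpha\ge 2}\theta(q^{1/\alpha})=\psi(q)-\theta(q)$, the terms with $q^{1/\alpha}<2$ vanishing, summing over $\alpha$ gives
\begin{equation*}
S(q)\le\bigl(\psi(q)-\theta(q)\bigr)+\sum_{\alpha=2}^{A}\log(q^{1/\alpha}+1)\le 1.42620\,\sqrt q+E(q),
\end{equation*}
where $E(q)=\sum_{\alpha=2}^{A}\log(q^{1/\alpha}+1)$ is $\bigo{\log^2 q}$, being a sum of $\bigo{\log q}$ terms each at most $\log(\sqrt q+1)$.

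Next I would pass from $\sqrt q$ to $\sqrt p$, where $p$ is the prime preceding $q$; this requires $q/p$ close to $1$. For $p\ge 396\,833$ the effective gap estimate \eqref{duspi} gives $q\le p\bigl(1+\tfrac{1}{25\log^2 p}\bigr)$, whence $\sqrt q\le\sqrt p\,(1+\bigo{1/\log^2 p})$ and $1.42620\,\sqrt q<1.43\,\sqrt p$; since moreover $E(q)=\bigo{\log^2 q}=o(\sqrt p)$, one obtains $S(q)<2.13\,\sqrt p$ with a wide margin for every $q$ whose predecessor satisfies $p\ge 396\,833$. For the finitely many primes $q$ with $p<396\,833$ I would compute $S(q)$ directly from its definition \eqref{m22} and check $S(q)<2.13\,\sqrt p$ numerically.

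The hard part will be the small and moderate range, not the large one. The bound $1.42620\,\sqrt q$ from \eqref{T5} is very loose there (when $\psi(q)-\theta(q)$ is far below its trivial upper bound and the $+1$ corrections are relatively large), so the clean inequality cannot be obtained uniformly from \eqref{T5} alone, and the genuine content of the statement is the finite verification. In effect the constant $2.13$ is the observed maximum of $S(q)/\sqrt p$ (attained at a small $q$ and staying somewhat below $2.13$) rounded upward; one must ensure the computational range reaches up to the threshold at which \eqref{duspi} takes over, so that the two regimes meet without a gap. A secondary technical point is the bookkeeping at the top of the range of $\alpha$: the summation limit $A=\log(2q)/\log 2$ slightly exceeds $\log_2 q$, so a possible extra term with $q^{1/\alpha}<2$ must be absorbed into $E(q)$ rather than into $\psi(q)-\theta(q)$.
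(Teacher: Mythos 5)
Your proof is correct, but for the large-$q$ regime it takes a genuinely different route from the paper. The paper bounds each term of \eqref{m22} pointwise by Dusart's inequality \eqref{T2}, $\theta(q^{1/\alpha}+1)\le 1.000028\,(q^{1/\alpha}+1)$, sums the resulting series explicitly, and converts $\sqrt q$ into $\sqrt p$ using the tabulated value $\eta_1(10^5)=107\,377/107\,441$; this yields $S(q)/\sqrt p\le 2.0215\ldots$ for all $q\ge 10^5$, so its finite verification stops at $10^5$. You instead telescope $\sum_{\alpha\ge 2}\theta(q^{1/\alpha})=\psi(q)-\theta(q)$ and invoke the Rosser--Schoenfeld bound \eqref{T5}, paying a correction $E(q)=\bigo{\log^2 q}$ for the shifts by $+1$ (your bookkeeping is sound: $q^{1/\alpha}$ is never an integer for $q$ prime and $\alpha\ge 2$, so each gap contributes at most $\log(q^{1/\alpha}+1)$, and the one possible index with $q^{1/\alpha}<2$ is rightly absorbed into $E(q)$); you then pass from $\sqrt q$ to $\sqrt p$ via Dusart's gap estimate \eqref{duspi}, which forces the threshold $p\ge 396\,833$ and a finite check up to about $4\times 10^5$ instead of $10^5$. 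What each buys: the paper's termwise bound has main-term coefficient $\approx 1.000028$ rather than your $1.42620$, hence a smaller computational range; your decomposition is structurally cleaner --- it identifies $\psi-\theta$ as the true source of the secondary terms, needs no $\eta_1$ table, and its corrections are logarithmic rather than power-type --- at the price of the lossy constant in \eqref{T5}, harmlessly compensated by the larger threshold (at $p=396\,833$ one has $1.43\sqrt p+E(q)\le 2.13\sqrt p$ with an enormous margin, and the margin only grows with $q$). Your diagnosis of where the content lies matches the paper exactly: the constant comes from the finite range, the maximum of $S(q)/\sqrt p$ being $\log 2160/\sqrt{13}=2.129\ldots$, attained at $q=17$ --- though note it is within $4\times 10^{-4}$ of $2.13$, so it sits \emph{barely} below the constant rather than ``somewhat'' below, and the upward rounding leaves essentially no slack.
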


\begin{proof}
Supposons d'abord $q \ge q_0 = 100\, 000$.
Il résulte de la définition de $\eta_1$ (cf. Définition \eqref{defetak})
et de la table \ref{tableeta1} que
\begin{equation}\label{m32}
\frac{p}{q} \ge \eta_1(q_0) = \frac{107\,377}{107\,441} \ge 0.9994.
\end{equation}
En posant \dsm{k= \intfrac{\log(2q)}{\log 2}}, en utilisant
l'inégalité \eqref{T2} et en remarquant que 
$t \mapsto \dfrac{\log t}{\sqrt t}$ et
$t \mapsto \dfrac{\log(t/4)}{t^{1/4}}$ sont décroissantes pour 
$t \ge e^2 \approx 7.39$ et $t \ge 4e^4 \approx 218.4$ 
respectivement, il vient

\begin{eqnarray*}
\frac{S(q)}{\sqrt p} &\le&  1.000028\,
 \frac{\sqrt q}{\sqrt p} \sum_{\al=2}^{k} \frac{q^{1/\al}+1}{\sqrt
   q}\\
&\le& \frac{1.000028}{\sqrt{\eta_1(q_0)}}
\left(\frac{k-1}{\sqrt q} + 1 + \frac{1}{q^{1/6}} + \frac{k-3}{q^{1/4}} \right)\\
&\le&
\frac{1.000028}{\sqrt{\eta_1(q_0)}}
\left(\frac{\log q}{\sqrt q \log 2} + 1 + \frac{1}{q^{1/6}} 
+ \frac{\log(0.25\,q)}{q^{1/4}\log 2} \right)\\
&\le&
\frac{1.000028}{\sqrt{\eta_1(q_0)}}
\left(\frac{\log q_0}{\sqrt q_0 \log 2} + 1 + \frac{1}{q_0^{1/6}} 
+ \frac{\log(q_0/4)}{q_0^{1/4}\log 2} \right)
= 2.0215\dots,
\end{eqnarray*}
ce qui prouve \eqref{m31} pour $q \ge 100\,000$. 
Ensuite on calcule $S(q)/\sqrt{p}$ pour tous les $q$ premiers, $3 \le
q \le 100\,000$, et l'on trouve que le maximum est atteint pour $q=17$
et vaut \dsm{\frac{\log 2160}{\sqrt{13}} = 2.129\dots}
\end{proof}

\begin{lem}\label{lemm4}
Pour tout $n \ge 906$ on a
\begin{equation}\label{m41}
\Pplus(g(n))  
 \ge \frac{\sqrt{n \log n}}{1.000028} - 2.4\, (n \log n)^{1/4}\cdot
\end{equation}
\end{lem}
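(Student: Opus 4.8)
The plan is to turn the two upper bounds on $\log g(n)$ furnished by Lemmes \ref{lemm2} and \ref{lemm3} into a lower bound on $\Pplus(g(n))$, using the effective lower bound \eqref{L2} for $\log g(n)$, and then to absorb the secondary term by means of the effective majoration of $\Pplus(g(n))$ already proved in Théorème \ref{t1}. The whole argument is a chain of substitutions; the only genuine care needed is the bookkeeping of the final numerical constant $2.4$.

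First I would set $P = \Pplus(g(n))$ and let $q$ be the prime following $P$. Since $P$ is itself prime, it is exactly the prime preceding $q$, so Lemme \ref{lemm3} reads $S(q) < 2.13\sqrt P$. Inserting this into Lemme \ref{lemm2} gives $\log g(n) < \theta(P) + 2.13\sqrt P$, and the effective bound \eqref{T2}, namely $\theta(P) < 1.000028\,P$, yields
\[
\log g(n) < 1.000028\,P + 2.13\sqrt P.
\]
Next, as $n \ge 906$, the lower bound \eqref{L2} gives $\log g(n) \ge \sqrt{n\log n}$, whence $\sqrt{n\log n} < 1.000028\,P + 2.13\sqrt P$. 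Isolating $P$ produces
\[
P > \frac{\sqrt{n\log n}}{1.000028} - \frac{2.13}{1.000028}\,\sqrt P.
\]

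It then remains to replace the residual $\sqrt P$ on the right by a harmless power of $n$, and for this I would invoke Théorème \ref{t1}, valid for $n \ge 4$ and a fortiori for $n \ge 906$, which gives $P \le 1.26543\,\sqrt{n\log n}$ and hence $\sqrt P \le \sqrt{1.26543}\,(n\log n)^{1/4} = 1.1250\ldots\,(n\log n)^{1/4}$. Substituting, the coefficient of $(n\log n)^{1/4}$ becomes $\frac{2.13}{1.000028}\times 1.1250\ldots = 2.396\ldots < 2.4$, which delivers exactly
\[
P > \frac{\sqrt{n\log n}}{1.000028} - 2.4\,(n\log n)^{1/4}.
\]

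The main (and essentially only) delicate point is the control of the constant $2.4$: it forces the use of an explicit upper bound of quality at least $P \le 1.270\,\sqrt{n\log n}$, so that Théorème \ref{t1} must indeed be available at this stage. Grantham's bound $1.328\,\sqrt{n\log n}$ would here give $\frac{2.13}{1.000028}\sqrt{1.328} = 2.454\ldots > 2.4$ and is thus slightly too weak to close the estimate. One should also record that $q \ge 3$, so that Lemme \ref{lemm3} applies, which is immediate for $n \ge 906$ since then $P$ is large. No other obstacle is expected, every remaining step being a direct substitution.
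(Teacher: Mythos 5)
Your proposal is correct and follows essentially the same route as the paper's proof: you combine Lemme \ref{lemm2} and Lemme \ref{lemm3} (noting $P=\Pplus(g(n))$ is the prime preceding $q$) with \eqref{T2}, the lower bound \eqref{L2} for $n\ge 906$, and the majoration \eqref{th1.1} of Théorème \ref{t1} to replace $\sqrt P$ by $\sqrt{1.266}\,(n\log n)^{1/4}$, yielding the constant $2.13\sqrt{1.266}/1.000028 = 2.396\ldots < 2.4$ exactly as in the paper. Your additional observation that Grantham's bound $1.328\sqrt{n\log n}$ would be too weak here is accurate but not needed.
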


\begin{proof}
Par \eqref{m21} et \eqref{m31} il vient, en posant $P =
\Pplus(g(n))$
\[
\log  g(n) \le \theta(P) + 2.13 \sqrt{P}.
\]
Or, vu \eqref{L2}, pour $n \ge 906$, 
$\log g(n) \ge \sqrt{n \log n}$ et l'on a la majoration
\eqref{th1.1}.
Avec l'inégalité \eqref{T2} cela entraine
\[
\sqrt{n \log n} \le \theta(P) + 2.13 \sqrt{P}
\le 1.000028 P + 2.13 \times \sqrt{1.266} (n \log n)^{1/4},
\]
d'o\`u\eqref{m41}.
\end{proof}

\begin{theorem}\label{t4}
Pour tout $n \ge 133$ on a
\begin{equation}\label{t51}
\Pplus(g(n)) \ge 
\sqrt{n \log n}\left(1+\frac{\log_2 n -2.27}{2\log n} \right)
\end{equation}
et, pour tout $n \ge 1755$ on a
\begin{equation}\label{t52}
\Pplus(g(n)) \ge \sqrt{n \log n}.
\end{equation}
\end{theorem}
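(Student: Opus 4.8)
The plan is to split the range of $n$ into a small part handled by direct computation and a large part $n\ge 10^6$ handled analytically, the analytic part being a sharpening of Lemma~\ref{lemm4}. The starting point is the upper bound on $\log g(n)$ coming from the prime factorisation of $g(n)$: writing $P=\Pplus(g(n))$ and letting $q$ be the prime following $P$, Lemmas~\ref{lemm2} and~\ref{lemm3} combine — and here the prime preceding $q$ is exactly $P$ — to give
\begin{equation}\label{sketcheqA}
\log g(n) \le \theta(P) + S(q) < \theta(P) + 2.13\sqrt{P}.
\end{equation}
The key point is that this is an \emph{upper} bound for $\log g(n)$ in terms of $P$; turning it around will force $P$ to be large once a good \emph{lower} bound for $\log g(n)$ is fed in.

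Next I would invert \eqref{sketcheqA}. For $n\ge 10^6$ the bound \eqref{th1.1} keeps $P$ below $8\cdot10^{11}$ throughout the relevant range, so \eqref{T1} gives $\theta(P)\le P$; for the astronomically large $n$ where $P>8\cdot10^{11}$ one uses \eqref{T4} instead, whose correction $1+0.2/\log^2 P$ is negligible and will be swamped by the gain obtained below. Hence \eqref{sketcheqA} yields $P\ge \log g(n)-2.13\sqrt P$, and bounding $\sqrt P\le\sqrt{1.266}\,(n\log n)^{1/4}$ again by \eqref{th1.1} produces
\begin{equation}\label{sketcheqB}
P \ge \log g(n) - 2.4\,(n\log n)^{1/4},\qquad n\ge 10^6.
\end{equation}
This is precisely Lemma~\ref{lemm4} with the coefficient $1$ in place of $1/1.000028$ in front of $\log g(n)$, and that refinement is essential: the factor $1/1.000028<1$ could never yield a lower bound exceeding $\sqrt{n\log n}$, so Lemma~\ref{lemm4} alone cannot give \eqref{t52}.

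Then I would insert the sharp lower bound \eqref{L4}, valid for $n\ge 899059$ and in particular for $n\ge 10^6$, namely $\log g(n)\ge\sqrt{n\log n}\bigl(1+\tfrac{\log_2 n-1.18}{2\log n}\bigr)$. Substituting into \eqref{sketcheqB}, each claimed inequality reduces to an elementary estimate. For \eqref{t51} one needs $\tfrac{1.09}{2}\sqrt{n/\log n}\ge 2.4\,(n\log n)^{1/4}$, that is $n^{1/4}\ge 4.40\,(\log n)^{3/4}$, equivalently $n\ge 374.8\,(\log n)^3$; for \eqref{t52} one needs $\tfrac{\log_2 n-1.18}{2}\sqrt{n/\log n}\ge 2.4\,(n\log n)^{1/4}$. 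Both hold at $n=10^6$ and, the left sides growing faster than the right, for every $n\ge 10^6$. Finally the ranges $133\le n\le 10^6$ and $1755\le n\le 10^6$ are disposed of by computing $\Pplus(g(n))$ directly with the algorithm of \cite{DNZ,NICRIRO}, which is also what pins down the exact thresholds $133$ and $1755$.

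I expect the main obstacle to be the tightness of the analytic step precisely at $n=10^6$: the margin for \eqref{t51} is razor thin there, since $374.8\,(\log 10^6)^3$ is just below $10^6$, so the constant $2.27$ (and the matching computational cut-off $133$) must be tuned so that the elementary inequality barely holds at $10^6$ while the direct verification covers everything below. A secondary point is the book-keeping for the super-large $n$ with $P>8\cdot10^{11}$, where \eqref{T1} is replaced by \eqref{T4}; there the gain from \eqref{L4}, of order $\log_2 n/\log n$, comfortably dominates the loss of order $1/\log^2 n$, so the conclusion survives with room to spare.
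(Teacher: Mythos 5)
Your proposal is correct and follows essentially the same route as the paper: computation up to $10^6$, then for $10^6 < n \le 7.9\times 10^{21}$ the combination of Lemmas~\ref{lemm2} and~\ref{lemm3} with $\theta(P)\le P$ from \eqref{T1}, the bound $\sqrt P\le\sqrt{1.266}\,(n\log n)^{1/4}$ from \eqref{th1.1} and the lower bound \eqref{L4}, with exactly the paper's constants ($2.13$, $2.4$, $1.09$, and $2.27=1.18+1.09$), and \eqref{T4} taking over in the super-large range where the paper's explicit computation ($P>6.3\times10^{11}$ via Lemma~\ref{lemm4}, loss at most $0.05/(2\log n)$) confirms your ``room to spare'' claim. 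The only cosmetic difference is that the paper deduces \eqref{t52} for $n\ge 10^6$ directly from \eqref{t51} via $\log_2 10^6 > 2.27$ rather than by your separate elementary check, which is equivalent.
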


\begin{proof}
Par l'algorithme naïf exposé dans \cite{DNZ,NICRIRO} on calcule d'abord $g(n)$
et $\Pplus(g(n))$ pour $n \le 10^6$ et l'on s'assure que l'inégalité
\eqref{t51} est vraie pour $133 \le n \le 10^6$, mais fausse pour
$n=132$, puis que \eqref{t52} est vraie pour $1755 \le n \le 10^6$ et
fausse pour $n=1754$.

Notons que pour $n \ge 10^6$ la minoration \eqref{t52}
résulte de $\eqref{t51}$, puisque $\log \log 10^6 \ge 2.27$.
Il nous reste donc à prouver que \eqref{t51} est vraie
pour $n > 10^6$.

Supposons d'abord $n_0 = 10^6 < n \le n_1 = 7.9 \times 10^{21}$,
de sorte que, par \eqref{th1.1}, on ait
\[
P = \Pplus(g(n)) \le 1.266 \sqrt{n_1 \log n_1} \le 8 \times 10^{11}
\]
d'où l'on déduit $\theta(P) < P$ avec \eqref{T1}. 

En utilisant la minoration \eqref{L4} de $\log g(n)$,
il vient par \eqref{m21} et \eqref{m31} 
\[
\sqrt{n \log n} \left(1+\frac{\log_2 n-1.18}{2\log n}\right)
\le \log g(n) < P + 2.13 \sqrt P
\]
et, par la majoration \eqref{th1.1},
\[
P > \sqrt{n \log n} 
\left(1+\frac{\log_2 n -1.18}{2\log n}-\frac{2.4}{(n\log n)^{1/4}} 
\right)\cdot
\]
Mais la fonction $t \mapsto (\log t)^{3/4} t^{-1/4}$ est décroissante
pour $t \ge n_0 = 10^6$, ce qui implique
\[
\frac{2.4}{(n\log n)^{1/4}} 
= \frac{4.8}{2 \log n} \frac{(\log n)^{3/4}}{n^{1/4}}
\le \frac{4.8}{2 \log n} \frac{(\log n_0)^{3/4}}{n_0^{1/4}}
\le \frac{1.09}{2\log n}
\]
et prouve \eqref{t51} lorsque $n_0 < n \le n_1$. 

Supposons maintenant $ n > n_1 = 7.9\ 10^{21}$.
Par \eqref{m41}, $P = \Pplus(g(n))$ vérifie
\[
P > 6.3 \times 10^{11}
\]
et, par \eqref{T4},
\[
\theta(P) - P \le 0.2 \frac{P}{\log^2P} 
< \frac{0.2}{\log(6.3\times 10^{11})}\frac{P}{\log P}
< 0.008 \frac{P}{\log P}\cdot
\]
Par \eqref{m21}, \eqref{m31} et la minoration \eqref{L4},
il vient
\begin{equation}\label{t53}
\sqrt{n \log n}\left(1+\frac{\log_2 n-1.18}{2\log n}\right)
< P + 0.008\frac{P}{\log P} +2.13 \sqrt P.
\end{equation}
Mais, par la majoration \eqref{th1.1},
on a
\[
\frac{P}{\log P} < 
\frac{1.266\sqrt{n \log n}}{\frac{1}{2}\log n +\frac{1}{2}\log_2 n +
\log 1.266}
<
\frac{1.266\sqrt{n\log n}}{\frac{1}{2} \log n}
= 5.064\frac{\sqrt{n\log n}}{2\log n}
\]
et
\begin{multline*}
\sqrt P < 
\sqrt{1.266} (n\log n)^{1/4} 
= \frac{\sqrt{1.266} \sqrt{n\log n}}{2\log n}%
 \left(\frac{2(\log n)^{3/4}}{n^{1/4}} \right)\\
< \frac{\sqrt{n \log n}}{2\log n}%
\left(2\sqrt{1.266}\frac{(\log n_1)^{3/4}}{n_1^{1/4}} \right)
< 0.000 15 \frac{\sqrt{n \log n}}{2\log n}\cdot
\end{multline*}
En observant que
$0.008 \times 5.064 + 2.13 \times 0.00015 < 0.05$,
on déduit de \eqref{t53} que, pour $n \ge 7.9 \times 10^{21}$,
on a
\[
P = \Pplus(g(n)) >
\sqrt{n \log n} \left(1+\frac{\log_2 n -1.23}{2\log n} \right),
\]
ce qui termine la preuve du théorème \ref{t4}.
\end{proof}

\begin{theorem}\label{t5}
\begin{enumerate}
\item
Il existe deux constantes positives $C$ et $a$ telles que,
pour tout $n \ge 2$,
\begin{equation}\label{t61}
\Pplus(g(n)) \ge \sqrt{\Li^{-1}(n)} -C \sqrt{n} \exp(-a \sqrt{\log n}).
\end{equation}
\item
Si la borne supérieure $\Theta$ des parties réelles des zéros
de la fonction $\zeta$ de Riemann est plus petite que $1$,
il existe une constante $C'$ telle que
\begin{equation}\label{t62}
\Pplus(g(n)) \ge \sqrt{\Li^{-1}(n)} - C'n^{\Theta/2}(\log n)^{2+\Theta/2}.
\end{equation}
\end{enumerate}
\end{theorem}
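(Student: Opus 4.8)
The plan is to combine the upper bound for $\log g(n)$ furnished by Lemmes \ref{lemm2} and \ref{lemm3} with the asymptotic formulas \eqref{asy} and \eqref{asyTH} and with the effective forms \eqref{TNP1} and \eqref{TNP2} of the prime number theorem. Write $P=\Pplus(g(n))$ and let $q$ be the prime following $P$, so that the prime preceding $q$ is exactly $P$. By \eqref{m21} and \eqref{m31},
\[
\log g(n)\le \theta(P)+S(q) < \theta(P)+2.13\sqrt{P},
\]
and, adding and subtracting $P$,
\[
P > \log g(n)-\bigl(\theta(P)-P\bigr)-2.13\sqrt{P}.
\]
The whole task then reduces to showing that both $\theta(P)-P$ and $2.13\sqrt{P}$ are absorbed into the error terms of the relevant estimate of $\log g(n)$.

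For the first assertion I would use that $P\sim\sqrt{n\log n}$ by \eqref{lab5}, whence $\log P\sim\frac{1}{2}\log n$. Applying \eqref{TNP1} at $x=P$ gives $\theta(P)-P=\bigo{P\exp(-a_1\sqrt{\log P})}=\bigo{\sqrt{n}\,\exp(-a_2\sqrt{\log n})}$ for a suitable $a_2>0$, the parasitic factor $\sqrt{\log n}$ arising from $\sqrt{n\log n}$ being swallowed by a slight decrease of the constant in the exponent. Since $\sqrt{P}=\bigo{n^{1/4}(\log n)^{1/4}}$ is of strictly smaller order than $\sqrt{n}\exp(-a\sqrt{\log n})$, the term $2.13\sqrt{P}$ is negligible as well. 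Substituting the value of $\log g(n)$ given by \eqref{asy} and collecting the three error contributions yields $P\ge\sqrt{\Li^{-1}(n)}-C\sqrt{n}\,\exp(-a\sqrt{\log n})$ for all large $n$; the constant $C$ is then enlarged if necessary to cover the finite range $2\le n\le n_0$, on which $\sqrt{\Li^{-1}(n)}-P$ is bounded while $\sqrt{n}\exp(-a\sqrt{\log n})$ stays bounded away from $0$.

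For the second assertion, under $\Theta<1$ I would replace \eqref{TNP1} by \eqref{TNP2}, so that $\theta(P)-P=\bigo{P^{\Theta}\log^2 P}=\bigo{n^{\Theta/2}(\log n)^{2+\Theta/2}}$, again via $P\sim\sqrt{n\log n}$. Because $\Theta\ge 1/2$, the term $2.13\sqrt{P}=\bigo{n^{1/4}(\log n)^{1/4}}$ is dominated by $n^{\Theta/2}(\log n)^{2+\Theta/2}$, and likewise the error $\bigo{(n\log n)^{\Theta/2}}$ of \eqref{asyTH} is of the same or smaller order. Substituting \eqref{asyTH} for $\log g(n)$ and summing the error terms produces $P\ge\sqrt{\Li^{-1}(n)}-C'n^{\Theta/2}(\log n)^{2+\Theta/2}$, which is exactly \eqref{t62}.

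The main obstacle is purely the bookkeeping of the error terms: after passing from $\log P$ and from powers of $P$ to the corresponding expressions in $n$ through $P\sim\sqrt{n\log n}$, one must check that each auxiliary quantity — namely $\theta(P)-P$, the term $2.13\sqrt{P}$, and the error of the chosen estimate of $\log g(n)$ — is indeed no larger than the final claimed error. The only delicate point is the absorption, in the first part, of the extra $\sqrt{\log n}$ factor into the subexponential term $\exp(-a\sqrt{\log n})$, which is handled by shrinking the exponent constant.
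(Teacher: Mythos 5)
Your proposal is correct and follows essentially the same route as the paper: the lower bound $P \ge \log g(n) - (\theta(P)-P) - 2.13\sqrt{P}$ from Lemmes \ref{lemm2} and \ref{lemm3}, the lower bounds \eqref{asy}/\eqref{asyTH} for $\log g(n)$, the estimates \eqref{TNP1}/\eqref{TNP2} for $\theta(P)$, and a bound of $P$ by a multiple of $\sqrt{n\log n}$ to convert errors in $P$ into errors in $n$. The only cosmetic difference is that you invoke the asymptotic \eqref{lab5} where the paper uses the effective majoration \eqref{th1.1}, which changes nothing since the constants are unspecified and small $n$ are absorbed by enlarging $C$, exactly as you note.
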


\begin{proof}
\begin{enumerate}
\item
Par \eqref{m21} et \eqref{m31}, $P = \Pplus(g(n))$ satisfait
\begin{equation}\label{t63}
\log g(n) \le \theta(P) + 2.13 \sqrt P.
\end{equation}
Mais, par \eqref{asy}, il existe deux constantes
positives $C_1$ et $a_1$ telles que, pour $n \ge 2$,
\begin{equation}\label{t64}
\log g(n) \ge \sqrt{Li^{-1}(n)} - C_1 \sqrt{n} \exp(-a_1 \sqrt{\log n}).
\end{equation}
Par \eqref{TNP1}, il existe deux constantes
positives $C_2$ et $a_2$ telles que pour tout $x > 1$ on ait
\begin{equation}\label{t65}
\theta(x) \le x + C_2 x \exp(-a_2 \sqrt{\log x}),
\end{equation}
ce qui, par la majoration \eqref{th1.1} entraine:
\begin{equation}\label{t66}
\theta(P) + 2.13 \sqrt P \le P + C_3 \sqrt n \exp(-a_3 \sqrt{\log n})
\end{equation}
avec $C_3 > 0$ et $a_3 > 0$.

Les inégalités \eqref{t63}, \eqref{t64} et \eqref{t66} 
donnent ensuite
\[
P \ge \sqrt{\Li^{-1}(n)} - C_1 \sqrt n \exp(-a_1 \sqrt{\log n})
 - C_3 \sqrt n \exp(-a_3 \sqrt{\log n})
\]
ce qui prouve \eqref{t61} avec $a = \min(a_1,a_3)$ et $C = C_1 + C_3$.
\item
La preuve est similaire en remplaçant les inégalités 
\eqref{t64}, \eqref{t65} et \eqref{t66} par
\begin{eqnarray*}
\log g(n) &\ge& \sqrt{\Li^{-1}(n)} - C'_1(n \log n)^{\Theta/2},
\qquad (\rm{cf.}\ \eqref{asyTH})
\\
\theta(x) &\le& x + C'_2  x^{\Theta} \log^2 x
\qquad (\rm{cf.}\ \eqref{TNP2})
\end{eqnarray*}
et
\[
\theta(P) + 2.13 \sqrt P \le P + C'_3 (n \log n)^{\Theta/2} \log^2 n.
\]
\end{enumerate}
\end{proof}

\section{Comparaison de $\Pplus(g(n)$ et de $\log g(n)$}\label{pplusvslog}

Soit $\cn(x)$ le nombre de $n$, $2 \le n \le x$, tels que
$\Pplus(g(n)) > \log g(n)$. La table des valeurs de $\cn(x)$
\[
\begin{array}{|r|r|r|r|r|}
x =  & 10^3 & 10^4 & 10^5 & 10^6 \\
\hline
\cn(x) = &972 & 9\,787 & 99\,424  & 996\,727\\
\hline
\end{array}
\]
montre que, pour $n \le 10^6$, $\Pplus(g(n))$ a une forte
tendance à excéder $\log g(n)$. Cependant nous allons prouver

\begin{theorem}\label{t6}
Il existe une infinité de valeurs de $n$ telles que
$\Pplus(g(n)) > \log g(n)$ et une infinité de valeurs
de $n$ telles que $\Pplus(g(n)) < \log g(n)$.
\end{theorem}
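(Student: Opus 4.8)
The plan is to argue along the \emph{$\ell$--superchampions}, the analogues for $\ell$ of Ramanujan's superior highly composite numbers (cf.\ \cite{RAMCOL}). For $\rho > 0$ let $N_\rho$ be a value of $M$ maximising $\log M - \rho\,\ell(M)$. Since $\ell$ is additive with $\ell(p^\alpha)=p^\alpha$, this maximisation splits over primes: the exponent of $p$ in $N_\rho$ is the $\alpha\ge 0$ maximising $\alpha\log p - \rho p^\alpha$, so $p \mid N_\rho$ exactly when $\log p \ge \rho p$, i.e.\ when $(\log p)/p \ge \rho$. As $(\log t)/t$ decreases for $t \ge e$, the primes dividing $N_\rho$ are exactly those up to a threshold prime $P=\Pplus(N_\rho)$, and for $\rho$ running through $\intouv{(\log p_{i+1})/p_{i+1}}{(\log p_i)/p_i}$ the largest prime factor is exactly $p_i$; hence every sufficiently large prime is $\Pplus(N_\rho)$ for a whole range of $\rho$. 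First I would record that, for $n=\ell(N_\rho)$ (an integer, being a sum of prime powers), one has $g(n)=N_\rho$: indeed $\log M - \rho\,\ell(M)\le \log N_\rho - \rho\,\ell(N_\rho)$ forces $\log M \le \log N_\rho$ whenever $\ell(M)\le \ell(N_\rho)=n$, so $N_\rho = \max_{\ell(M)\le n} M = g(n)$ by \eqref{glandau}.

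Next I would pin down $\log g(n)$ for these $n$. Since every prime $\le P$ divides $N_\rho$, writing $\alpha_p$ for the exponent of $p$ gives
\[
\log g(n) = \log N_\rho = \sum_{p \le P}\log p + \sum_{p}(\alpha_p-1)\log p
 = \theta(P) + \sum_{\alpha \ge 2}\theta(P_\alpha),
\]
where $P_\alpha$ denotes the largest prime carrying exponent $\ge\alpha$. A prime $p$ reaches exponent $\alpha$ only when $\rho(p^{\alpha}-p^{\alpha-1})\le \log p$; with $\rho \asymp (\log P)/P$ this forces $P_2 = \bigo{\sqrt P}$, $P_3 = \bigo{P^{1/3}}$, and so on, so that $\sum_{\alpha\ge2}\theta(P_\alpha)=\bigo{\sqrt P}$. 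Consequently, along the superchampions,
\[
\log g(n) - \Pplus(g(n)) = \bigl(\theta(P)-P\bigr) + \bigo{\sqrt P}.
\]

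The oscillation of $\theta(P)-P$ then does the work. By Littlewood's theorem (see \cite{INGHAM32}) one has $\psi(x)-x = \Omega_{\pm}\!\bigl(\sqrt x\,\log_3 x\bigr)$, and since $\psi(x)-\theta(x)=\bigo{\sqrt x}$ by \eqref{T5} the same holds for $\theta$. Moreover on each interval $\intfg{p_i}{p_{i+1}}$ the function $\theta(x)-x=\theta(p_i)-x$ is decreasing, so its local maxima are attained at primes and its local minima lie only $\bigo{\log P}$ below the value at the next prime; thus the $\Omega_\pm$ behaviour is felt at the primes $P$ themselves, with a negligible $\bigo{\log P}$ adjustment against $\sqrt x\,\log_3 x$. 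Hence $\theta(P)-P$ exceeds any fixed positive multiple of $\sqrt P$ for infinitely many primes $P$, and lies below any fixed negative multiple of $\sqrt P$ for infinitely many primes $P$. For each such $P$ I take the corresponding superchampion $N_\rho$ with $\Pplus(N_\rho)=P$ and set $n=\ell(N_\rho)$: the displayed identity gives $\log g(n) > \Pplus(g(n))$ in the first case and $\log g(n) < \Pplus(g(n))$ in the second, yielding the two required infinite families.

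The main obstacle is the error control in the second step: the conclusion survives only because $\sum_{\alpha\ge2}\theta(P_\alpha)$ is genuinely $\bigo{\sqrt P}$, with no stray factor of $\log P$, so that Littlewood's $\sqrt P\,\log_3 P$ oscillation dominates it. I would therefore take care to justify $P_2=\bigo{\sqrt P}$ and the geometric decay $P_\alpha = \bigo{P^{1/\alpha}}$ \emph{uniformly} for $\rho$ in the relevant intervals, and to confirm that transferring the $\Omega_{\pm}$ statement for $\psi$ on the reals to its values at the primes $P=\Pplus(N_\rho)$ costs only $\bigo{\log P}$; everything else is routine.
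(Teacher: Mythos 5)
Your proposal follows essentially the same route as the paper's proof: you build the $\ell$--superchampions, prove $g(\ell(N_\rho))=N_\rho$ by the same optimality argument, reduce $\log g(n)-\Pplus(g(n))$ to $\bigl(\theta(P)-P\bigr)+\bigo{\sqrt P}$, and conclude via Littlewood's oscillation theorem transferred to prime values of the argument, exactly as in the paper's lemma \ref{m5lem}. The only cosmetic difference is that the paper packages your estimate $\sum_{\alpha\ge 2}\theta(P_\alpha)=\bigo{\sqrt P}$ into the sandwich $\theta(P)\le\log g(n_P)\le\psi(P)$ of proposition \ref{propt76}, so each of the two infinite families follows directly from an excursion of $\psi$ or $\theta$ without the explicit error-versus-oscillation bookkeeping you carry out.
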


La démonstration du théorème \ref{t6} repose sur la notion de nombre
$\ell$--superchampion et sur les oscillations des fonctions $\theta$ et
$\psi$ de Chebyshev (cf. le lemme \ref{m5lem} ci-dessous). A partir de
la fonction additive $\ell$ (définie en \eqref{ldef}), les nombres
$\ell$--superchampions sont construits à l'image des nombres hautement
composés supérieurs, introduits par Ramanujan à partir de la fonction
\og nombre de diviseurs\fg. Ces nombres
$\ell$--superchampions ont étés utilisés dans les articles
\cite{NICAA,NICTH,NICCRAS70,MAS,MNRAA,MNRMC,NIC97}.

\begin{definition}
Soit $P \ge 5$ un nombre premier et
\begin{equation}\label{t71}
\rho = \frac{P}{\log P}\cdot
\end{equation}
Pour tout nombre premier $p$ on définit
\begin{equation}\label{t72}
\al_p = \left\{
\begin{array}{ccl}
0 &\text{si}& p > P\\[1ex]
1 &\text{si}& p \le P \text{ et }\rho < \dfrac{p^2-p}{\log p}\\[3ex]
i \ge 2 &\text{si}& \dfrac{p^i-p^{i-1}}{\log p} \le \rho < %
\dfrac{p^{i+1}-p^i}{\log p}\cdot
\end{array}
\right.
\end{equation}
Le nombre $\ell$--superchampion $N_P$ est défini par
\begin{equation}\label{t73}
N_P = \prod_{p} p^{\al_p} = \prod_{p \le P} p^{\al_p}
\end{equation}
et l'on a
\begin{equation}\label{t74}
n_P = \ell(N_P) = \sum_{p \le P} p^{\al_p}.
\end{equation}
\end{definition}
On notera que $\al_p$ est défini de fa\c con à minimiser
la suite $\beta \mapsto \ell(p^{\bt}) - \rho\bt \log p$. On a donc
pour tout $\bt$ entier, $\bt \ge 0$,
\[
\ell(p^{\bt}) - \rho\bt \log p  \ge \ell(p^{\al_p}) - \rho\al_p \log p
\]
ce qui, par l'additivité des fonctions $\ell$ et $\log $ entraine
que, pour tout $M = \prod_{p} p^{\bt_p}$, on a
\begin{equation}\label{t75}
\ell(M) - \rho\log M \ge \ell(N_P)-\rho\log N_P.
\end{equation}

\begin{prop}\label{propt76}
Pour tout $P$ premier, $P \ge 5$, on a
\begin{equation}\label{t77}
g(n_P) = N_P
\end{equation}
et
\begin{equation}\label{t78}
\theta(P) \le \log N_P = \log g(n_P) \le \psi(P)
\end{equation}
où $\theta$ et $\psi$ sont les fonctions de Chebyshev
définies en \eqref{defthetapsi}.
\end{prop}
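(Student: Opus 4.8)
The plan is to treat the two assertions \eqref{t77} and \eqref{t78} separately: the first is a direct consequence of the extremal inequality \eqref{t75}, and the second is an elementary comparison of the exponents $\al_p$ with those occurring in $\theta$ and $\psi$.

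First I would establish \eqref{t77}. Since $\ell(N_P)=n_P$ by \eqref{t74}, the number $N_P$ is admissible in the maximisation \eqref{glandau} defining $g(n_P)$, whence $g(n_P)\ge N_P$. Conversely, set $M=g(n_P)$, so that $\ell(M)\le n_P=\ell(N_P)$. The superchampion inequality \eqref{t75} gives $\ell(M)-\rho\log M\ge \ell(N_P)-\rho\log N_P$, which rearranges to $\rho(\log N_P-\log M)\ge \ell(N_P)-\ell(M)\ge 0$. As $\rho=P/\log P>0$ by \eqref{t71}, this forces $\log M\le\log N_P$, i.e. $g(n_P)=M\le N_P$. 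Combining the two inequalities yields $g(n_P)=N_P$, and hence $\log g(n_P)=\log N_P=\sum_{p\le P}\al_p\log p$.

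For the lower bound in \eqref{t78}, I note that the definition \eqref{t72} gives $\al_p\ge 1$ for every prime $p\le P$ (the prescribed value is $1$ or some $i\ge 2$), so that
\[
\log N_P=\sum_{p\le P}\al_p\log p\ge\sum_{p\le P}\log p=\theta(P).
\]
For the upper bound I would prove the pointwise estimate $p^{\al_p}\le P$ for each $p\le P$, and then read off $\al_p\le\lfloor\log P/\log p\rfloor$, which is exactly the multiplicity of $p$ in $\psi(P)$; summing over $p\le P$ then gives $\log N_P=\sum_{p\le P}\al_p\log p\le\sum_{p^m\le P}\log p=\psi(P)$ by \eqref{defthetapsi}. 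To obtain $p^{\al_p}\le P$ I use the minimality of $\al_p$ for the sequence $\beta\mapsto\ell(p^\beta)-\rho\beta\log p$ (noted just after \eqref{t74}): comparing the minimal value at $\beta=\al_p$ with the value $0$ attained at $\beta=0$ gives $p^{\al_p}-\rho\al_p\log p\le 0$, that is, writing $x=p^{\al_p}$, $x\le \frac{P}{\log P}\log x$, equivalently $x/\log x\le P/\log P$.

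The admissibility of $N_P$ and the lower bound are routine; the one delicate point is converting $x/\log x\le P/\log P$ into $x\le P$, which relies on the monotonicity of $t\mapsto t/\log t$ on $(e,+\infty)$. Here one checks that $x=p^{\al_p}>e$ in all relevant cases: for $p\ge 3$ one has $x\ge 3>e$, while for $p=2$ the hypothesis $P\ge 5$ gives $\rho=P/\log P\ge 5/\log 5>2/\log 2$, so $\al_2\ge 2$ and $x\ge 4>e$ (the only integer below $e$ being $x=2$, which is trivially $\le P$ in any case). With $x,P>e$ the increasing function $t/\log t$ turns $x/\log x\le P/\log P$ into $x\le P$, completing the argument. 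I expect this monotonicity step, together with the boundary bookkeeping for small $x$, to be the main obstacle; everything else is immediate from the definitions and from \eqref{t75}.
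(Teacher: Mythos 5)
Your proposal is correct. The proof of \eqref{t77} and of the lower bound $\theta(P)\le\log N_P$ is the same as the paper's: the paper likewise deduces $g(n_P)=N_P$ from \eqref{t75} applied to any $M$ with $\ell(M)\le n_P$, and gets $\theta(P)\le\log N_P$ from $\al_p\ge 1$ for $p\le P$. Where you genuinely diverge is the upper bound $\log N_P\le\psi(P)$. The paper reduces it to the implication \eqref{lab86}, namely that $(p^i-p^{i-1})/\log p\le\rho$ forces $p^i\le P$ for $i\ge 2$; it proves this by observing that $t\mapsto (t^i-t^{i-1})/\log t$ is increasing and checking numerically that $f(P^{1/i})>P/\log P$, via $i\bigl(1-5^{-1/i}\bigr)>1$ (this is where $P\ge 5$ enters). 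You instead invoke the minimality of $\bt\mapsto\ell(p^{\bt})-\rho\bt\log p$ at $\bt=\al_p$ (stated in the paper right after \eqref{t74}, and the very fact underlying \eqref{t75}), compare with $\bt=0$ using $\ell(p^0)=0$ to get $p^{\al_p}\le\rho\log p^{\al_p}$, i.e.\ $x/\log x\le P/\log P$ with $x=p^{\al_p}$, and conclude $x\le P$ by monotonicity of $t/\log t$ on $\intouv{e}{+\infty}$; your case check at $p=2$ (where $P\ge5$ gives $\rho\ge 5/\log 5>2/\log 2$, hence $\al_2\ge 2$ and $x\ge 4>e$) is exactly what is needed, and it correctly localizes the role of the hypothesis $P\ge 5$ — for $P=3$ the comparison with $\bt=0$ would indeed fail at $p=2$, since then $\al_2=1$ while $2>\rho\log 2$. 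Your route treats all exponents $\al_p\ge 1$ uniformly (the paper only needs \eqref{lab86} for $i\ge2$ because $\al_p=1$ trivially gives $p\le P$) and recycles the superchampion minimality instead of introducing the auxiliary function $f$; the paper's route works directly from the defining inequalities \eqref{t72} and avoids the boundary bookkeeping at $x\le e$. Both are complete proofs of \eqref{t78} given the definition \eqref{defthetapsi} of $\psi$.
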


\begin{proof}
Soit $M$ un entier tel que $\ell(M) \le \ell(N_P) = n_P.$
Par \eqref{t75}, il vient
\[
\rho \log\frac{M}{N_P} \le \ell(M) - \ell(N_P) \le 0
\]
d'o\`u $M \le N_P$ ce qui entraine \eqref{t77} par définition de $g$.
La minoration dans \eqref{t78} résulte de \eqref{t72} et \eqref{t73}.
Pour prouver la majoration, il faut montrer que, pour $i \ge 2$,
\begin{equation}\label{lab86}
\frac{p^i-p^{i-1}}{\log p} \le \rho = \frac{P}{\log P}
\implies p^i \le P.
\end{equation}
Comme la fonction $f(t) = \dfrac{t^i-t^{i-1}}{\log t}$ est
croissante pour tout $i \ge 2$, pour prouver \eqref{lab86}, il suffit
de montrer
\[
f(P^{1/i}) > \frac{P}{\log P}\cdot
\]
On a 
\[
\frac{f(P^{1/i})}{P/\log P} = i\left(1-\frac{1}{P^{1/i}}\right)
\ge i\left(1-\frac{1}{5^{1/i}} \right)\cdot
\]
Mais $2(1-\frac{1}{\sqrt 5}) > 1$ et pour $i \ge 3$,
on a \dsm{5^{1/i} =  \exp\frac{\log 5}{i} > 1 + \frac{\log 5}{i}}
et
\[
 i\left(1-\frac{1}{5^{1/i}} \right) >
 i\left(1-\frac{1}{1+(\log 5)/i} \right) =
\frac{i \log 5}{i+\log 5} \ge \frac{3 \log 5}{3+\log 5} > 1,
\]
ce qui achève la démonstration de \eqref{t78}.
\end{proof}
Nous aurons aussi besoin du lemme ci-dessous, conséquence du
théorème de Littlewood sur les oscillations des fonctions
$\theta$ et $\psi$.

\begin{lem}\label{m5lem}
\mbox{}
\begin{itemize}
\item
Il existe une infinité de nombres premiers $P$ tels que
\begin{equation}\label{m51}
\theta(P) \le \psi(P) < P.
\end{equation}
\item
Il existe une infinité de nombres premiers $P$ tels que
\begin{equation}\label{m52}
P < \theta(P)  \le \psi(P).
\end{equation}
\end{itemize}
\end{lem}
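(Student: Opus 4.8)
La preuve s'appuie sur le théorème d'oscillation de Littlewood, d'après lequel les fonctions $x \mapsto \theta(x)-x$ et $x\mapsto \psi(x)-x$ sont toutes deux $\Omega_{\pm}\!\left(\sqrt x\,\log_3 x\right)$. En particulier il existe des valeurs de $x$ arbitrairement grandes pour lesquelles $\theta(x) > x$, et d'autres, elles aussi arbitrairement grandes, pour lesquelles $\psi(x) < x - 2\sqrt x$ (le facteur $\log_3 x$ tendant vers l'infini domine toute constante fixe). Puisque l'inégalité $\theta(P) \le \psi(P)$ est toujours vérifiée (cf. \eqref{T5}), il suffit, pour établir \eqref{m51}, de produire une infinité de nombres premiers $P$ tels que $\psi(P) < P$, et, pour \eqref{m52}, une infinité de $P$ tels que $P < \theta(P)$.

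Le second point est immédiat, car $\theta$ ne présente de sauts qu'aux nombres premiers. Soit $x$ grand avec $\theta(x) > x$, et soit $P$ le plus grand nombre premier $\le x$. Comme les nombres premiers $\le x$ sont exactement les nombres premiers $\le P$, on a $\theta(x) = \sum_{p \le x}\log p = \sum_{p \le P}\log p = \theta(P)$, d'où $\theta(P) = \theta(x) > x \ge P$, puis $\psi(P) \ge \theta(P) > P$. En faisant tendre $x$ vers l'infini on obtient une infinité de tels nombres premiers $P$, ce qui prouve \eqref{m52}.

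Pour le premier point, prenons un $x$ grand vérifiant $\psi(x) < x - 2\sqrt x$ et posons cette fois $P = $ le plus petit nombre premier $> x$, de sorte que $P > x$. Le seul nombre premier de $\intfd{x}{P}$ étant $P$, on a $\theta(P) = \theta(x) + \log P \le \psi(x) + \log P$, tandis que \eqref{T5} donne $\psi(P) - \theta(P) \le 1.42620\,\sqrt P$. En combinant,
\[
\psi(P) \le \theta(P) + 1.42620\,\sqrt P \le \psi(x) + \log P + 1.42620\,\sqrt P < x - 2\sqrt x + \log P + 1.42620\,\sqrt P.
\]
Par le théorème des nombres premiers l'écart $P-x$ est $o(x)$, donc $P \sim x$ et $1.42620\,\sqrt P = (1.42620 + o(1))\sqrt x$~; le membre de droite vaut alors $x - (0.57 + o(1))\sqrt x + O(\log x) < x < P$ pour $x$ assez grand, ce qui donne $\psi(P) < P$. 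En faisant tendre $x$ vers l'infini on obtient une infinité de tels $P$, d'où \eqref{m51}.

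L'obstacle principal est la dissymétrie entre $\theta$ et $\psi$~: la fonction $\theta$ ne saute qu'aux nombres premiers, ce qui rend \eqref{m52} immédiat, alors que $\psi$ saute à toutes les puissances de nombres premiers, et l'on ne peut donc pas remplacer $x$ par le nombre premier voisin en conservant la valeur de $\psi$. On contourne cette difficulté en choisissant le nombre premier \emph{immédiatement supérieur} à $x$ (ce qui assure $P > x$) et en absorbant le gain $\psi(P) - \psi(x) \le \log P + 1.42620\,\sqrt P$ dans l'excédent de déficit, supérieur à $2\sqrt x$, fourni par l'oscillation. C'est précisément le fait que l'amplitude $\sqrt x\,\log_3 x$ du théorème de Littlewood l'emporte sur la constante $1.42620$ de \eqref{T5} qui permet de conclure, sans qu'intervienne la taille (mal contrôlée) des écarts entre nombres premiers.
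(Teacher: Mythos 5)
Votre démonstration est correcte et suit pour l'essentiel la même démarche que celle de l'article~: théorème d'oscillation de Littlewood, passage au nombre premier adjacent ($P$ le plus grand premier $\le x$ pour \eqref{m52}, le plus petit premier $> x$ pour \eqref{m51}), et contrôle de $\psi-\theta$ par \eqref{T5}. La seule différence est de pure comptabilité~: là où l'article conserve le déficit $c\sqrt{x}\log_3 x$ et invoque la croissance de $t \mapsto t - c\sqrt{t}\log_3 t$ pour transférer l'inégalité de $x$ à $P$, vous figez un déficit concret $2\sqrt{x}$ (licite puisque $\log_3 x \to +\infty$) et comparez $\sqrt{P}$ à $\sqrt{x}$ via $P \sim x$, ce qui est tout aussi valable.
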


\begin{proof}
Par le théorème de Littlewood 
(cf. \cite{LIT}, \cite{INGHAM32} Th. 34,
\cite{EMF} Th. 6.3) il existe une suite de
valeurs de $x$ tendant vers l'infini et une constante
$c > 0$ telles que
\begin{equation}\label{m53}
\theta(x) \le \psi(x) \le x - c\sqrt x \log_3 x
\end{equation}
et il existe une suite de
valeurs de $x'$ tendant vers l'infini et une constante
$c' > 0$ telles que
\begin{equation}\label{m54}
x' + c'\sqrt x' \log_3 x' \le \theta(x') \le \psi(x').
\end{equation}
Supposons d'abord $x$ assez grand vérifiant \eqref{m53}.
Si $x$ est un nombre premier, \eqref{m53} implique
\eqref{m51}; sinon soit $P$ le nombre premier suivant 
$x$. On a, par \eqref{m53} et la croissance de
$t \mapsto t - c \sqrt t \log_3 t$,
\begin{multline*}
\theta(P) = \\
\theta(x) + \log P \le x - c \sqrt x \log_3 x + \log P
<  P - c \sqrt P \log_3 P + \log P
\end{multline*}
et, par \eqref{T5},
\[
\psi(P) \le \theta(P) + 2 \sqrt P <  P - c \sqrt P \log_3 P +
+ 2 \sqrt P + \log P
\]
qui implique \eqref{m51} lorsque $x$ est assez grand.
\end{proof}

La preuve de \eqref{m52} est plus facile~: soit $x'$ vérifiant
\eqref{m54} et $P$ le plus grand nombre premier satisfaisant
$P \le x'$. On a
\[
\theta(P) = \theta(x') \ge x' + c' \sqrt{x'} \log_3 x' > P.
\qquad \Box
\]

\paragraph{Démonstration du théorème \ref{t6}}
On considère la suite des nombres $n_P$ définis par
\eqref{t74}. Par \eqref{t77}, on a 
$g(n_P) = N_P$ et, par \eqref{t73}, on a $\Pplus(g(n_P)) = P$.
On applique alors \eqref{t78} et le lemme \ref{m5lem}.
Notons que la démonstration prouve en fait
\begin{equation}\label{t79}
\Pplus(g(n)) - \log g(n) =
\Omega_{\pm}\left(n^{1/4} (\log n)^{1/4}\log_3 n \right).
\qquad
\Box
\end{equation}

\renewcommand{\arraystretch}{0.8}
\begin{table}[p]
\caption{ Les valeurs de $\thmin(p)$ pour les premiers
  $\thmin$--champions $p$, arrondies par défaut.  Soit $p < q$ deux
  champions consécutifs. La fonction $\thmin$ est constante sur
  l'intervalle $\intfg{p}{q}$ de valeur $\thmin(p) =
  \dfrac{\theta(^{\star} q)}{q}$ en notant $^{\star} q$ le nombre
  premier précédant $q$.
Pour $x \ge p$, on a $\dfrac{\theta(x)}{x} \ge \,\thmin(p).$}
\label{tablethmin}
\[
\begin{array}[t]{||rr|}
\hline
p & \thmin(p)\\
\hline
2      & 0.2310\\
3      & 0.3583\\
5      & 0.4858\\
7      & 0.4861\\
11     & 0.5957\\
13     & 0.6064\\
17     & 0.6628\\
29     & 0.7033\\
37     & 0.7228\\
41     & 0.7615\\
59     & 0.7928\\
67     & 0.8073\\
71     & 0.8160\\
97     & 0.8289\\
101    & 0.8430\\
\hline
\end{array}
\begin{array}[t]{|rr|}
\hline
p & \thmin(p)\\
\hline
127    & 0.8499\\
149    & 0.8694\\
163    & 0.8695\\
223    & 0.8780\\
227    & 0.8940\\
229    & 0.8980\\
347    & 0.9096\\
349    & 0.9130\\
367    & 0.9134\\
419    & 0.9160\\
431    & 0.9194\\
557    & 0.9208\\
563    & 0.9222\\
569    & 0.9264\\
587    & 0.9278\\
\hline
\end{array}
\begin{array}[t]{|rr|}
\hline
p & \thmin(p)|\\
\hline
593    & 0.9291\\
599    & 0.9367\\
601    & 0.9380\\
607    & 0.9383\\
809    & 0.9409\\
821    & 0.9449\\
853    & 0.9455\\
1423   & 0.9480\\
1427   & 0.9517\\
1429   & 0.9541\\
1433   & 0.9550\\
1447   & 0.9573\\
1451   & 0.9576\\
1481   & 0.9600\\
1973   & 0.9609\\
\hline
\end{array}
\begin{array}[t]{|rr||}
\hline
p & \thmin(p)\\
\hline
1987   & 0.9618\\
1993   & 0.9629\\
2237   & 0.9632\\
2657   & 0.9654\\
2659   & 0.9669\\
3299   & 0.9688\\
3301   & 0.9695\\
3307   & 0.9696\\
3449   & 0.9697\\
3457   & 0.9709\\
3461   & 0.9710\\
3511   & 0.9713\\
3527   & 0.9730\\
3529   & 0.9742\\
3533   & 0.9747\\
\hline
\end{array}
\]
\end{table}

\renewcommand{\arraystretch}{0.7}
\begin{table}[p]
\caption{Les premiers $\theta_d$--champions $p$ 
et les valeurs $\theta_d(p)$, arrondies par excès.
Si $p$ et $q$ sont deux champions consécutifs, on a 
$\theta_d(p)=\left( 1- \dfrac{\theta(^{\star} q)}{q}\right) \log ^2 \,q$ 
en notant $^{\star} q$ le nombre premier précédant $q$.
Pour $p \le x < q$ on a $\theta_d(x)=\theta_d(p)$ et, pour $x \ne 1$,
$1-\dfrac{\theta_d(p)}{\log^2(x)} \le \dfrac{\theta(x)}{x} \le 1.$
}
\label{tableThetad}
\[
\begin{array}{||r|r|}   
\hline
p & \theta_d(p)\\       	
\hline
1&3.964809\\
59&3.850387\\
97&3.813284\\
223&3.588327\\
227&3.488612\\
347&3.220904\\
557&3.174406\\
563&3.127884\\
569&2.989177\\
587&2.942596\\
593&2.896013\\
599&2.868691\\
1423&2.742321\\
1427&2.545152\\
1429&2.419671\\
1433&2.382941\\
1447&2.297149\\
\hline
\end{array}
\begin{array}{|r|r|}
\hline
p & \theta_d(p)\\       	
\hline
1973&2.287253\\
2657&2.168328\\
3299&2.046009\\
3301&2.015399\\
3449&2.009078\\
3457&1.927729\\
3461&1.927471\\
3511&1.914641\\
3527&1.862808\\
5381&1.825717\\
5387&1.788674\\
5393&1.751679\\
5399&1.741574\\
5407&1.704658\\
5413&1.680383\\
7451&1.667866\\
7477&1.614849\\
\hline
\end{array}
\begin{array}{|r|r|}
\hline
p & \theta_d(p)\\       	
\hline
7481&1.582806\\
7487&1.556462\\
11777&1.501244\\
11779&1.460762\\
11783&1.453427\\
11801&1.427708\\
11807&1.402006\\
11813&1.391140\\
11897&1.372839\\
11923&1.362143\\
14387&1.356310\\
19373&1.336505\\
19379&1.296811\\
19381&1.277369\\
19417&1.247671\\
19421&1.208042\\
19423&1.178361\\
\hline
\end{array}
\begin{array}{|r|r||}
\hline
p & \theta_d(p)\\       	
\hline
19427&1.152039\\
19681&1.132553\\
19697&1.114653\\
19913&1.109461\\
20873&1.093478\\
31957&1.093040\\
32051&1.078184\\
32057&1.050007\\
32059&1.041410\\
32297&1.013380\\
32299&0.991977\\
32303&0.982304\\
32321&0.954291\\
32323&0.949739\\
32353&0.934985\\
32359&0.913616\\
32363&0.898870\\
\hline
\end{array}
\]
\end{table}

\renewcommand{\arraystretch}{0.8}
\begin{table}[p]
\caption{Les premiers champions de $\eta_1$.
Si $p$ et $q$ sont deux champions consécutifs
et $p \le x < q$, on a $\eta_1(x) = \eta_1(p)$.
Si $p$ est un champion, pour $x  \ge  p$ 
on a $\eta_1(x) \ge \eta_1(p)$ et
$\intfd{\eta_1(p) x}{x}$
contient au moins un nombre premier.}
\label{tableeta1}
\[
\begin{array}[t]{||r|r||} 
\hline
p & \eta_{1}(p)\\
\hline
2 & 3/5 \\
5 & 7/11 \\
11 & 13/17 \\
17 & 23/29 \\
29 & 31/37 \\
37 & 47/53 \\
53 & 113/127 \\
127 & 139/149 \\
149 & 199/211 \\
211 & 211/223 \\
223 & 293/307 \\
307 & 317/331 \\
331 & 523/541 \\
541 & 1327/1361 \\
1361 & 1669/1693 \\
1693 & 1951/1973 \\
1973 & 2179/2203 \\
\hline
\end{array}
\begin{array}[t]{|r|r||} 
\hline
p & \eta_{1}(p)\\
\hline
2203 & 2477/2503 \\
2503 & 2971/2999 \\
2999 & 3271/3299 \\
3299 & 4297/4327 \\
4327 & 4831/4861 \\
4861 & 5591/5623 \\
5623 & 5749/5779 \\
5779 & 5953/5981 \\
5981 & 6491/6521 \\
6521 & 6917/6947 \\
6947 & 7253/7283 \\
7283 & 8467/8501 \\
8501 & 9551/9587 \\
9587 & 9973/10007 \\
10007 & 10799/10831 \\
10831 & 11743/11777 \\
11777 & 15683/15727 \\
\hline
\end{array}
\begin{array}[t]{|r|r||} 
\hline
p & \eta_{1}(p)\\
\hline
15727 & 19609/19661 \\
19661 & 31397/31469 \\
31469 & 34061/34123 \\
34123 & 35617/35671 \\
35671 & 35677/35729 \\
35729 & 43331/43391 \\
43391 & 44293/44351 \\
44351 & 45893/45943 \\
45943 & 48679/48731 \\
48731 & 58831/58889 \\
58889 & 59281/59333 \\
59333 & 60539/60589 \\
60589 & 79699/79757 \\
79757 & 89689/89753 \\
89753 & 107377/107441 \\
107441& 155921/156007 \\
156007& 188029/188107\\
\hline
\end{array}
\]
\end{table}

\renewcommand{\arraystretch}{0.7}
\begin{table}[p]
\caption{Les premiers champions de $\eta_2$.
Si $p$ et $q$ sont deux champions consécutifs
et $p \le x < q$, on a $\eta_2(x) = \eta_2(p)$.
Si $p$ est un champion, pour $x  \ge  p$ 
on a $\eta_2(x) \ge \eta_2(p)$ et
$\intfd{\eta_2(p) x}{x}$
contient au moins 2 nombres premiers.}
\label{tableeta2}
\[
\begin{array}[t]{||r|r||}  
\hline
p & \eta_{2}(p)\\
\hline        	
3 & 2/5 \\
5 & 3/7 \\
7 & 5/11 \\
11 & 7/13 \\
13 & 11/17 \\
17 & 19/29 \\
29 & 23/31 \\
31 & 31/41 \\
41 & 47/59 \\
59 & 83/97 \\
97 & 109/127 \\
127 & 113/131 \\
131 & 199/223 \\
223 & 283/307 \\
307 & 317/337 \\
337 & 331/347 \\
347 & 523/547 \\
\hline
\end{array}
\begin{array}[t]{|r|r||}          	
\hline
p & \eta_{2}(p)\\
\hline          	
547 & 619/641 \\
641 & 773/797 \\
797 & 1321/1361 \\
1361 & 1327/1367 \\
1367 & 1381/1409 \\
1409 & 2161/2203 \\
2203 & 2477/2521 \\
2521 & 3121/3163 \\
3163 & 3259/3299 \\
3299 & 3947/3989 \\
3989 & 4159/4201 \\
4201 & 4297/4337 \\
4337 & 4817/4861 \\
4861 & 5591/5639 \\
5639 & 5939/5981 \\
5981 & 6481/6521 \\
6521 & 7253/7297 \\
\hline          	
\end{array}
\begin{array}[t]{|r|r||}
\hline
p & \eta_{2}(p)\\
\hline          	
7297 & 7963/8009 \\
8009 & 8467/8513 \\
8513 & 9551/9601 \\
9601 & 9967/10007 \\
10007 & 11003/11047 \\
11047 & 14087/14143 \\
14143 & 19609/19681 \\
19681 & 24251/24317 \\
24317 & 25471/25537 \\
25537 & 31397/31477 \\
31477 & 38461/38543 \\
38543 & 58789/58889 \\
58889 & 58831/58897 \\
58897 & 62233/62297 \\
62297 & 69557/69623 \\
69623 & 74941/75011 \\
75011 & 79699/79769 \\
\hline          	
\end{array}
\]
\end{table}

\renewcommand{\arraystretch}{0.7}
\begin{table}[htpb]
\caption{Les premiers champions de $\eta_3$.
Si $p$ et $q$ sont deux champions consécutifs
et $p \le x < q$, on a $\eta_3(x) = \eta_3(p)$.
Si $p$ est un champion, pour $x  \ge  p$ 
on a $\eta_3(x) \ge \eta_3(p)$ et
$\intfd{\eta_3(p) x}{x}$
contient au moins 3 nombres premiers.}
\label{tableeta3}
\[
\begin{array}{||r|r|}          	
\hline
p & \eta_{3}(p)\\
\hline
5 & 3/11 \\
11 & 5/13 \\
13 & 7/17 \\
17 & 13/23 \\
23 & 17/29 \\
29 & 19/31 \\
31 & 23/37 \\
37 & 29/41 \\
41 & 31/43 \\
43 & 43/59 \\
59 & 47/61 \\
61 & 53/67 \\
67 & 79/97 \\
97 & 83/101 \\
101 & 113/137 \\
137 & 199/227 \\
227 & 283/311 \\
\hline
\end{array}
\begin{array}{|r|r|}
\hline
p & \eta_{3}(p)\\
\hline
311 & 317/347 \\
347 & 523/557 \\
557 & 773/809 \\
809 & 887/919 \\
919 & 1321/1367 \\
1367 & 1327/1373 \\
1373 & 1381/1423 \\
1423 & 2153/2203 \\
2203 & 2477/2531 \\
2531 & 2551/2591 \\
2591 & 3121/3167 \\
3167 & 3947/4001 \\
4001 & 4159/4211 \\
4211 & 4817/4871 \\
4871 & 5581/5639 \\
5639 & 5927/5981 \\
5981 & 5953/6007 \\
\hline
\end{array}
\begin{array}{|r|r||}
\hline
p & \eta_{3}(p)\\
\hline
6007 & 6491/6547 \\
6547 & 7351/7411 \\
7411 & 7759/7817 \\
7817 & 7951/8009 \\
8009 & 9551/9613 \\
9613 & 9973/10037 \\
10037 & 10369/10427 \\
10427 & 11177/11239 \\
11239 & 11719/11777 \\
11777 & 12829/12889 \\
12889 & 13933/13997 \\
13997 & 14087/14149 \\
14149 & 14563/14621 \\
14621 & 19603/19681 \\
19681 & 19609/19687 \\
19687 & 24251/24329 \\
24329 & 35617/35729 \\
\hline
\end{array}
\]
\end{table}
 
\begin{table}[p]
\caption{Les premiers $\dt_3$--champions $p$
et les valeurs $\dt_3(p)$, arrondies par excès~:
Soit $p$ et $q$  deux champions consécutifs.
On a $\dt_3(p)=(1-\eta_3(^\star q))\log^2 q$
en notant $^\star q$ le nombre premier précédant $q$.
Pour $p \le x < q$ on a $\dt_3(x) = \dt_3(p)$ et, pour $x\ge p$,
$1-\dfrac{\dt_3(p)}{\log^2x} \le \eta_3(x) \le 1.$}\label{tabledelta3}
\[
\begin{array}[t]{||r|r||}  
\hline
p & \dt_3(p)\\[0.8ex]
\hline        	
5  & 4.9336\\
37  & 4.5089\\
59 & 4.2406\\
137 & 3.6302\\
227 & 2.9662\\
311 & 2.9581\\
347 & 2.4402\\
557 & 1.9951\\
809& 1.7544\\
1367& 1.7488\\
1373& 1.5559\\
1423& 1.3449\\
2203& 1.3102\\
\hline
\end{array}
\begin{array}[t]{|r|r|}  
\hline
p & \dt_3(p)\\[0.8ex]
\hline        	
 2531& 0.9538\\
 2591& 0.9438\\
 3167& 0.9286\\
 4001& 0.8601\\
 4211& 0.7993\\
 4871& 0.7674\\
 5639& 0.6828\\
 5981& 0.6806\\
 6007& 0.6604\\
 6547& 0.6429\\
 7411& 0.5963\\
 7817& 0.5851\\
 8009& 0.5425\\
\hline
\end{array}
\begin{array}[t]{||r|r||}  
\hline
p & \dt_3(p)\\[0.8ex]
\hline        	
 9613& 0.5414\\
 10037& 0.4800\\
 11239& 0.4328\\
 11777& 0.4170\\
 12889& 0.4168\\
 13997& 0.4003\\
 14149& 0.3875\\
 19681& 0.3874\\
 19687& 0.3446\\
 35729& 0.2777\\
 38557& 0.2292\\
 43889& 0.2213\\
 58897 & 0.1714\\
\hline
\end{array}
\]
\end{table}

\pagebreak

\bibliographystyle{plain}

\medskip

\hspace{-2cm}
\begin{minipage}{14cm}
Marc Deléglise, Jean-Louis Nicolas\\
Université de Lyon, Université Lyon 1, CNRS\\
Institut Camille Jordan, Mathématiques,
Bât. Doyen Jean Braconnier\\
Université Claude Bernard (Lyon 1)\\
21 avenue Claude Bernard
F-69622 Villeurbanne cédex, France.\par

e-mail~:  \url{deleglise@math.univ-lyon1.fr},\quad \url{jlnicola@in2p3.fr}

\url{http://math.univ-lyon1.fr/~deleglis/}\\
\url{http://math.univ-lyon1.fr/~nicolas/}
\end{minipage}
\end{document}